\newcommand{\ID}{\operatorname{Id}}
\newcommand{\tr}{\operatorname{tr}}
\newcommand{\CC}{\mathbf{C}}
\newcommand{\RR}{\mathbf{R}}
\newcommand{\mcH}{\mathcal{H}}
\newcommand{\mcT}{\mathcal{T}}
\newcommand{\mcU}{\mathcal{U}}
\newcommand{\mcB}{\mathcal{B}}
\newcommand{\one}{\mathbf{1}}
\newcommand{\mcA}{\mathcal{A}}
\newcommand{\GG}{\Gamma}
\newcommand{\Gt}{\tilde{G}}
\newcommand{\bGG}{\partial\Gamma}
\newcommand{\cp}{\Gamma\ltimes C(\bGG)}
\newcommand{\cpK}{\Gamma\ltimes C(K)}
\newcommand{\Aut}{\operatorname{Aut}}
\DeclareMathOperator{\TR}{TR}
\DeclareMathOperator*{\wklim}{\textup{wk-lim}}
\newcommand{\norm}[1]{\|{#1}\|}
\newcommand{\st}{\,;\;}
\newcommand{\period}{\;.}
\newcommand{\comma}{\;,}
\newcommand{\defn}[1]{\emph{#1}}
\newcommand{\inv}[1]{{#1}^{-1}}
\newcommand{\pir}{\pi_{\textrm{reg}}}
\newcommand{\positivi}{\mcB^+(\mcH)}
\newtheorem{theorem}{Theorem}[section]
\newtheorem{proposition}[theorem]{Proposition}
\newtheorem{lemma}[theorem]{Lemma}
\newtheorem{corollary}[theorem]{Corollary}
\newtheorem*{mthm}{Oddity Theorem Using~(FTC)}
\newtheorem*{dthm}{Duplicity Theorem Using~(FTC)}
\theoremstyle{definition}
\newtheorem{definition}[theorem]{Definition}
\newtheorem{remark}[theorem]{Remark}
\newenvironment{pmat}{\begin{pmatrix}}{\end{pmatrix}}
\begin{document}
\selectlanguage{english}

\title[Duplicity on the boundary]
	{Free group representations: duplicity on the boundary}

\author{Waldemar Hebisch}
\address{Waldemar Hebisch \\Mathematical Institute \\
University of Wroc{\l}aw \\
50-384 Wroc{\l}aw \\
pl. Grunwaldzki 2/4 \\
POLAND}
\email{hebisch@math.uni.wroc.pl}
\author{Gabriella Kuhn}
\address{Gabriella Kuhn \\
Dipartimento di Matematica e Applicazioni \\
Universit\`a di Milano Bicocca \\
via Cozzi 53 \\
20125 Milano \\
ITALY}
\email{mariagabriella.kuhn@unimib.it}

\author{Tim Steger}
\address{Tim Steger\\
		 Matematica\\
		 Universit\`a degli Studi di Sassari\\
		 Via Piandanna 4\\
		 07100 Sassari\\
		 Italy}
\email{steger@uniss.it}

\pagestyle{myheadings}

\begin{abstract}
We present a powerful theorem for proving the irreducibility of
tempered unitary representations of the free group.
\end{abstract}

\keywords{free group, crossed-product $C^*$-algebra, irreducible,
  tempered, unitary representation}

\subjclass{Primary; 22D10, 43A65. Secondary: 15A48, 22E45, 22E40}

\maketitle

\section{Introduction}\label{intro0}
Let $\Gamma$~be a finitely generated non-abelian free group and let
$\pi:\Gamma\to\mcU(\mcH)$ be a unitary representation of $\Gamma$.
Let~$\bGG$ be the usual boundary of~$\Gamma$. When~$\pi$ is
\defn{tempered}, that is when it is weakly contained in the regular
representation, one can sometimes view~$\mcH$ as an $L^2$-space
on~$\bGG$, where the action of~$x\in\Gamma$ on an $L^2$-function is
given by a two-stage operation: first, translate the function via~$x$;
second, apply some pointwise linear operation. Indeed, one can always
view~$\mcH$ as a subspace of an $L^2$-space with such an action. See
Proposition~\ref{tempered}.

Now suppose that $\pi$~is irreducible. There are lots of examples
where there are precisely two (essentially different) ways in which
$\mcH$~can be identified with an $L^2$-space on~$\bGG$. Roughly
speaking, this is the phenomenon which we call~$\emph{duplicity}$. The
single most important result here, the Duplicity Theorem, starts with
the hypothesis that we have two such identifications. There is a
further technical hypothesis, a Finite Trace Condition, which holds
for many interesting examples and fails for many others. Our first
main conclusion is that there are no identifications beyond the two we
started with.

It is not necessary to suppose that $\pi$~is irreducible.  Alternative
hypotheses, much easier to prove, give irreducibility as a second
main conclusion. See~\cite{Pe-S} for an application of this
technique. The biggest known family of examples where the hypotheses
and conclusions of the Duplicity Theorem hold is a certain subfamily
of the representations described in~\cite{K-S3}. We know of no other
method to prove their irreducibility in a uniform manner.

A third conclusion, under the same hypotheses, is an analogue of Schur
orthogonality. In the formula below $A_\pi$ is a positive constant,
$|x|$~stands for the word-length of~$x\in\Gamma$, $v_1,v_2\in\mcH$,
while $v_3$ and $v_4$ must be chosen in a certain dense subspace
$\mcH^\infty\subset\mcH$.
\begin{equation}\label{schur0}
  \lim_{\epsilon\to 0+} \epsilon
    \sum_{x\in\Gamma} e^{-\epsilon|x|}
      \langle v_1,\pi(x)v_3 \rangle
      \overline{\langle v_2,\pi(x)v_4 \rangle}
      =2A_\pi\langle v_1,v_2\rangle
        \overline{\langle v_3,v_4\rangle}
      \period
\end{equation}
Theorem~\ref{Schur} gives a more elaborate version of this identity
which connects limits of this sort with the two identifications
of~$\mcH$ with $L^2$-spaces on $\bGG$.

Besides duplicity, there are examples of irreducible~$\pi$
illustrating two other phenomena: \defn{monotony}, where there is only
one identification between~$\mcH$ and an $L^2$-space on~$\bGG$ and
\defn{oddity}, where there is only one identification, but it has to
be with a proper subspace of the $L^2$-space. This paper has nothing to
say about monotony, but there is an Oddity Theorem, closely analogous
to the Duplicity Theorem.

\section{Definitions and statements of results}\label{intro}
So let $\Gamma$~be a non-abelian free group on a given finite set of
free generators. Let $A\subseteq\Gamma$ consist of those generators
and their inverses. Let also $\pi:\Gamma\to\mcU(\mcH)$ be a unitary
representation of $\Gamma$.

Without going into the matter in any detail, we warn the reader that
$\Gamma$~is \emph{not} a Type~I group (see \cite{Dix}). Among other
things, this means that a given unitary representation may be
decomposable as a direct integral of irreducibles in more than one
way. Also, the unitary dual of~$\Gamma$, the space of equivalence
classes of unitary irreducibles, cannot be parametrized by any
standard Borel space (see \cite{Hjorth} and \cite{Glimm}) which means
in practice that one cannot hope for a parametrization that one could
actually work with. Moreover, the usual machinery of character theory
is not applicable.

So what can one do? Many papers construct specific families of
 representations and prove them irreducible. See for
example \cite{Yos}, \cite{Py-Sw}, \cite{FT-P}, \cite{FT-S},
\cite{Pe-S}, \cite{K-S1}, \cite{P}, \cite{K-S3}, and \cite{BG}.
Some of these papers also prove inequivalence of representations,
either within or between families. The first objective of this paper
is to explain a powerful indirect method for proving irreducibility
and inequivalence.

All the representations which will be of interest here have
``realizations'' as $L^2$-spaces on the \emph{boundary}
of~$\Gamma$. Recall that the Cayley graph of~$\Gamma$ with respect
to~$A$ is a tree, and that this tree has a standard compactification
which is obtained by adjoining a boundary, which we
denote~$\bGG$. This boundary can be described as the space of ends of
the tree; it also coincides with the boundary of~$\Gamma$ considered
as a Gromov hyperbolic group.  Concretely, if we identify~$\Gamma$ with
the set of finite reduced words:
\begin{equation*}
  \{ a_1a_2\dots a_n \st a_j\in A, a_ja_{j+1}\neq 1\}
\end{equation*}
then we can identify~$\bGG$ with the set of infinite reduced words
\begin{equation*}
  \{ a_1a_2a_3\dots  \st a_j\in A, a_ja_{j+1}\neq 1\}
    \period
\end{equation*}
For $x\in\Gamma$ let~$\Gamma(x)$ be the set of finite reduced words
which start with the reduced word for~$x$; let~$\bGG(x)$ be the set of
infinite reduced words which start with the reduced word for~$x$. A
basis for the topology on the compactification $\Gamma\sqcup\bGG$ is
given by the singletons $\{x\}$ and the sets $\Gamma(x)\sqcup\bGG(x)$,
as $x$~varies through~$\Gamma$.  The left-action of~$\Gamma$ on
$\Gamma$ extends to a continuous action on the compactification.

Let~$C(\bGG)$ be the commutative $C^*$-algebra 
of continuous complex valued functions on $\bGG$.
Likewise for~$C(\Gamma\sqcup\bGG)$.  If
one wishes to identify an abstract Hilbert space~$\mcH'$ with an
$L^2$-space on $\bGG$, the essence of the identication is given by the
action of~$C(\bGG)$ on~$\mcH'$ corresponding to pointwise
multiplication. This action exists no matter what measure on~$\bGG$ is
used to construct the $L^2$-space; also the $L^2$-space might be
vector-valued rather than scalar-valued; indeed the dimension of the
vectors might vary in some measurable way from point to point
of~$\bGG$. It is the \emph{spectral theorem} for~$C(\bGG)$ (see
\cite{Rudin}) which tells us that any $C(\bGG)$-action on~$\mcH'$ does
indeed correspond to an identification of~$\mcH'$ with an $L^2$-space
on~$\bGG$.

Obviously, a Hilbert space with no further structure can be identified
with an $L^2$-space on~$\bGG$ in many, many different ways. Now
suppose the Hilbert space carries a unitary representation,
$\pi':\Gamma\to\mcU(\mcH')$. We would like to identify~$\mcH'$ with an
$L^2$-space on~$\bGG$ in such a way that the $\Gamma$-action
on~$\mcH'$ matches up with the $\Gamma$-action
on~$\bGG$. Specifically, we would like the operator~$\pi'(x)$ to be a
two-stage operation as described in the introduction: first, translate
an $L^2$-function on~$\bGG$ via~$x$; second, apply some pointwise
linear operation.

While it is not hard to make this precise, it is more efficient to
express the concept in terms of the compatibility between the two
actions on~$\mcH'$: the action of~$\Gamma$ and the action
of~$C(\bGG)$. Denote both of these actions by~$\pi'$.  Let
$\lambda:\Gamma\to\Aut(C(\bGG))$ be given by:
\begin{equation*}
  (\lambda(x)G)(\omega)=G(x^{-1}\omega)
\end{equation*}
i.e. left-translation. The desired compatability is:
\begin{equation}\label{bdry-rpn}
  \pi'(x)\pi'(G)\pi'(x)^{-1} = \pi'(\lambda(x)G)
    \qquad\text{for $x\in\Gamma$ and $G\in C(\bGG)$.}
\end{equation}
In fact, a pair of actions which satisfy~\eqref{bdry-rpn} fit together
to give a representation of a certain
$C^*$-algebra, the \emph{crossed-product $C^*$-algebra},
denoted~$\cp$. Vice versa, any $\cp$-representation comes from a pair
of actions which fit together as per~\eqref{bdry-rpn}. The definition
of $\cp$ is standard, and can be found, for example,
in \cite{Davidson}, but there is also a short explanation in the
following section.

Given a unitary representation $\pi:\Gamma\to\mcU(\mcH)$, when is it
possible to identify~$\mcH$ with the representation space of a
$\cp$-representation? Quite often, as it happens, but to get a
clean answer we have to modify the question: when is it possible to
identify~$\mcH$ with a subspace of the representation space of a
$\cp$-representation?
\begin{definition}
  Let $\pi:\Gamma\to\mcU(\mcH)$ be a unitary representation
  of~$\Gamma$. A \defn{boundary realization} of~$\pi$ is an isometric
  $\Gamma$-inclusion $\iota$ of $\mcH$ into~$\mcH'$ where
  \begin{itemize}
  \item $\mcH'$ is the representation space of a
    $\cp$-representation~$\pi'$,
  \item and $\iota(\mcH)$ is cyclic for the action of~$C(\bGG)$
    on~$\mcH'$.
  \end{itemize}
\end{definition}

One thinks of the map~$\iota$ as an identification of~$\mcH$ with a
subspace of an $L^2$-space on~$\bGG$, where the $L^2$-space carries a
$\Gamma$-action compatible with the $\Gamma$-action on $\bGG$. If one
omits the second condition in the definition, and if one had a
boundary realization as above, then one could replace $\mcH'$ with
$\mcH'\oplus\mcH''$ and $\iota$ with $\iota\oplus 0$ for any second
$\cp$-representation space~$\mcH''$. It is convenient to exclude
this second, essentially irrelevant, summand.

\begin{proposition}\label{tempered}
  A unitary representation $\pi:\Gamma\to\mcU(\mcH)$ has a boundary
  realization if and only if $\pi$~is weakly contained in the regular
  representation of~$\Gamma$.
\end{proposition}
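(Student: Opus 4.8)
The plan is to prove the two implications separately, with the ``only if'' direction being essentially formal and the ``if'' direction requiring an explicit construction. For the forward direction, suppose $\iota\colon\mcH\to\mcH'$ is a boundary realization with associated $\cp$-representation $\pi'$. Restricting $\pi'$ to $\Gamma$ gives a unitary representation on $\mcH'$ containing $\pi$ as a subrepresentation, so it suffices to show that $\pi'|_\Gamma$ is weakly contained in the regular representation. The key point is that $\cp$, being a crossed product of an abelian $C^*$-algebra by $\Gamma$ acting on $\bGG$ with the relevant amenability properties (the action on $\bGG$ is amenable, or equivalently $\Gamma$ is exact / the action is topologically amenable), has its full and reduced crossed products coinciding, and every representation of the reduced crossed product restricts on $\Gamma$ to a multiple-of-regular-type representation. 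Concretely, one notes $C(\bGG)\rtimes_r\Gamma$ sits inside $C(\bGG)\bar\otimes B(\ell^2\Gamma)$ in a way compatible with the $\Gamma$-actions, and weak containment in the regular representation is inherited by subrepresentations. So any $\pi$ admitting a boundary realization is tempered.

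For the converse, suppose $\pi$ is weakly contained in $\pir$. First I would reduce to the cyclic case: decompose $\mcH$ as a (possibly transfinite) direct sum of cyclic subrepresentations, realize each one, and take the direct sum of the resulting boundary realizations — one checks the cyclicity-for-$C(\bGG)$ condition survives because it can be arranged summand by summand. So assume $\pi$ is cyclic with cyclic vector $v$, and let $\phi(x)=\langle v,\pi(x)v\rangle$ be the associated positive-definite function. Weak containment in $\pir$ means $\phi$ is a limit, uniformly on finite sets, of positive-definite functions of the form $x\mapsto\langle\xi,\pir(x)\xi\rangle$, equivalently that $\phi$ lies in the closure (in the topology of pointwise convergence) of the cone generated by matrix coefficients of $\pir$; by standard Haagerup-type arguments for free groups one may even take $\phi$ itself to be a coefficient of a multiple of $\pir$, i.e.\ there is a Hilbert space $\mcK$ and a function $F\colon\Gamma\to\mcK$ with $\phi(xy^{-1})=\langle F(x),F(y)\rangle$. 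The goal is to manufacture from this data an $L^2$-space on $\bGG$ together with the two compatible actions.

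The heart of the construction is to build the $\cp$-representation space. I would form the $C^*$-algebra $\cp$ explicitly as the closure of finite sums $\sum_x G_x\,x$ with $G_x\in C(\bGG)$, with multiplication twisted by $\lambda$, and then produce a state on $\cp$ extending the functional $\sum_x G_x x\mapsto\sum_x \big(\int_{\bGG}G_x\,d\nu\big)\phi(x)$ for a suitable probability measure $\nu$ on $\bGG$ — the natural choice being a measure quasi-invariant under $\Gamma$ and ``compatible'' with $\phi$, for instance a Patterson--Sullivan-type measure or, more robustly, a measure obtained by pushing forward along the end-point map the measure underlying the realization of $\phi$ as a coefficient of $\pir$ (recall $\ell^2\Gamma$ has an obvious completion intertwining with $C(\Gamma\sqcup\bGG)$, and the boundary part carries the relevant measure class). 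Positivity of this functional — so that GNS applies — is exactly where the hypothesis that $\phi$ is tempered must be used in an essential way: one must check $\sum_{x,y}\overline{(G_x)(\omega)}(G_y)(\omega)\phi(x^{-1}y)\,d\nu(\omega)\ge 0$, and this follows because $\phi(x^{-1}y)=\langle F(x),F(y)\rangle$ makes the kernel a genuine positive kernel after integrating against the positive measure $\nu$. Applying the GNS construction to this state yields $\mcH'$ with a $\cp$-representation $\pi'$; the image of $1\in\cp$ generates a copy of $\mcH$ on which $C(\bGG)$ acts cyclically by construction, giving the desired isometric $\Gamma$-inclusion $\iota$. The main obstacle, and the step needing the most care, is verifying positivity of this functional and checking that the cyclic subspace generated by $\iota(\mcH)$ under $C(\bGG)$ is all of $\mcH'$ while $\iota$ genuinely restricts to $\pi$ on $\mcH$ — in other words that no information is lost or spuriously added in passing through GNS; this is where the precise choice of $\nu$ and the free-group structure (reduced words, the tree boundary) enter, rather than the construction being pure soft analysis.
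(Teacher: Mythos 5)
Your ``only if'' direction (boundary realization $\Rightarrow$ tempered) is essentially the paper's: both reduce to topological amenability of the $\Gamma$-action on $\bGG$ and cite/invoke the standard consequences. The problem is in the converse, which is where the real work lies, and there your construction has two genuine gaps. First, the claim that a tempered $\phi$ ``may even be taken to be a coefficient of a multiple of $\pir$'' is false for free groups: weak containment in $\pir$ is strictly weaker than containment in a multiple of $\pir$. Indeed $\lambda(\Gamma)''$ is a II$_1$ factor, so every nonzero subrepresentation of $\infty\cdot\pir$ generates a II$_1$ factor and in particular is never irreducible; yet plenty of irreducible tempered representations exist (the whole point of this paper). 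The identity $\phi(x^{-1}y)=\langle F(x),F(y)\rangle$ that you actually use is just the GNS factorization, valid for \emph{every} positive-definite $\phi$, so it cannot be the place where temperedness enters, and your positivity argument does not use the hypothesis at all.

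Second, the functional you propose on $\cp$, namely $\sum_x G_x\delta_x\mapsto\sum_x\bigl(\int_{\bGG}G_x\,d\nu\bigr)\phi(x)$, is not a state for any choice of $\nu$: using $(G\delta_x)^*=(\lambda(x^{-1})\bar G)\delta_{x^{-1}}$ one checks it is Hermitian only if $\nu(\lambda(x^{-1})G)=\nu(G)$ for all $x$, i.e.\ only if $\nu$ is $\Gamma$-invariant, and $\bGG$ carries no invariant probability measure. Computing $\Phi(\xi^*\xi)$ produces the kernel $\int\bar G_xG_y\,d(x_*\nu)$, which is not even symmetric, so positivity fails; inserting Radon--Nikodym factors to repair this changes the construction entirely. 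The paper sidesteps both issues (Proposition~\ref{inclusion-in-boundary}): it never writes down an explicit formula for the limiting state. Instead it equips $\ell^2(\Gamma)$ with the concrete $C(\bGG)$-action $(\pir'(G)f)(x)=G(xk_0)f(x)$, so that each approximating vector $v_n\in\ell^2(\Gamma)$ (supplied directly by the \emph{definition} of weak containment) gives an honest vector state $\phi_n'$ of $\cp$ extending $\phi_n$; positivity is then automatic, and one passes to a weak* limit point of the $\phi_n'$ using compactness of the state space of the separable unital algebra $\cp$ before applying GNS. If you want to salvage your approach, you should replace the explicit product formula by this extension-and-compactness argument.
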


Suppose $\pi$~is \emph{irreducible} and weakly contained in the
regular representation. How many different boundary realizations does
it have?  To make this question precise, we need
\begin{definition}
  Let $\pi:\Gamma\to\mcU(\mcH)$ be a unitary representation
  of~$\Gamma$. Two boundary realizations $\iota_j:\mcH\to\mcH_j'$ are
  \defn{equivalent} if there exists a unitary map
  $J:\mcH_1'\to\mcH_2'$ between the two representation spaces which
  intertwines both the $\Gamma$-actions and the $C(\bGG)$-actions and
  such that $J\iota_1=\iota_2$.
\end{definition}

How many inequivalent boundary realizations does $\pi$~have? There are
many known examples where the answer is one; also many known examples
where the answer is not one. Call a boundary realization
$\iota:\mcH\to\mcH'$ a \defn{perfect boundary realization} if~$\iota$
is a unitary equivalence, i.e. a bijection and not just an injection.
In many of the known cases where $\pi$~has more than one boundary
realization, it has exactly two perfect boundary realizations and all
other boundary realizations are obtained as combinations of those two.
Indeed, given the known examples, one can conjecture that this is the
only possibility when $\pi$~has more than one realization.  See the
afterword to~\cite{K-S2} for a more detailed version of this
conjecture, and also for indications of how things stand for some
known families of representations.

The second main objective of this paper is to present a theorem which
(in many cases) allows one to prove, for a representation which has
two known perfect boundary realizations, that there are no others.

\begin{dthm}\label{DupThm} Let $\pi:\Gamma\to\mcU(\mcH)$ be a unitary
  representation of~$\Gamma$. Suppose
  \begin{itemize}
  \item $(\pi'_\pm,\mcH'_\pm)$ are two irreducible
    $\cp$-representations, inequivalent as $\cp$-representations.
  \item $\iota_\pm:\mcH\to\mcH'_\pm$ are two perfect boundary
    realizations of~$\pi$.
  \item The following Finite Trace Condition \eqref{FTC} holds
    \begin{equation*}
      \norm{(\iota_+^* \pi'(\one_{\bGG(a)})\iota_+)
      (\iota_-^* \pi'(\one_{\bGG(b)})\iota_-)}_{HS}<\infty
    \end{equation*}
     {for $a,b\in A$, $a\neq b$}.
  \end{itemize}
  Then
  \begin{itemize}
  \item $\pi$~is irreducible as a $\Gamma$-representation.
  \item Up to equivalence, $\iota_+$ and $\iota_-$ are the only perfect
    boundary realizations of~$\pi$.
  \item Any imperfect boundary realization of~$\pi$ is equivalent to
    the map
    $\sqrt{t_+}\,\iota_+\oplus\sqrt{t_-}\,\iota_-:\mcH\to\mcH'_+\oplus\mcH'_-$ 
    for constants $t_+,t_->0$ with $t_++t_-=1$.
  \end{itemize}
\end{dthm}

A representation~$\pi$ which satisfies the conclusions of this theorem
is said to satisfy \defn{duplicity}. Examples where both the
hypotheses and the conclusions are valid include the representations
of~\cite{Yos} and the non-endpoint representations
of~\cite{FT-P}. There are other examples of representations where the
\eqref{FTC} fails but which nonetheless satisfy duplicity.  There are further
examples without \eqref{FTC} where duplicity appears to hold, but for which
we have no proof.

The Duplicity Theorem is also a tool for proving the irreducibility
of~$\pi$.  To apply it, one must establish the irreducibility and
inequivalence of the two $\cp$-representations, $\pi'_\pm$, but
proving irreducibility for $\cp$-representations is far easier than
proving irreducibility for $\Gamma$-representations.

There are lots of examples of irreducible representations~$\pi$ which
have only one realization, that realization perfect:
\defn{monotony}. Proving this requires different methods than those
presented here. See \cite{K-S2}, \cite{KSS} and \cite{BG}.
On the other hand, representations which have a single
\emph{imperfect} realization can, if an appropriate Finite Trace
Condition holds, be attacked with the same methods as in the case of
duplicity.

\begin{mthm}\label{OddThm}
Let $\pi:\Gamma\to\mcU(\mcH)$ be a unitary
representation of~$\Gamma$. Suppose
  \begin{itemize}
  \item $(\pi',\mcH')$ is an irreducible $\cp$-representation.
  \item $\iota:\mcH\to\mcH'$ is an imperfect realization of~$\pi$.
  \item The following Finite Trace Condition \eqref{FTC} holds
    \begin{equation*}
      \norm{P_2\pi'(\one_{\bGG(a)})P_1}_{HS}<\infty
      \qquad\text{for each $a\in A$}
    \end{equation*}
     where $P_1:\mcH'\to\mcH'$ is the projection onto~$\iota(\mcH)$
     and $P_2=\ID-P_1$ is the projection onto the orthogonal
     complement of~$\iota(\mcH)$.
  \end{itemize}
  Then
  \begin{itemize}
  \item $\pi$~is irreducible as a $\Gamma$-representation.
  \item Up to equivalence, $\iota$ is the only boundary realization
    of~$\pi$.
  \end{itemize}
  Observe that the unitary $\Gamma$-action which $\pi'$ gives
  on~$\mcH'$ stabilizes $\mcH_1=\iota(\mcH)$, so it also stabilizes
  the orthogonal complement $\mcH_2=\mcH\ominus\mcH_1$. Let
  $\pi_2:\Gamma\to\mcU(\mcH_2)$ denote the $\Gamma$-action
  on~$\mcH_2$. One can also conclude:
  \begin{itemize}
  \item $\pi_2$ is irreducible.
  \item $\pi_2$ is inequivalent to $\pi$.
  \end{itemize}
\end{mthm}
A representation~$\pi$ which satisfies the conclusions of this theorem
is said to satisfy \defn{oddity}.  Among other examples, the
non-endpoint representations of~\cite{P} satisfy the hypotheses of the
theorem, and are examples of oddity. There are also known examples
where the \eqref{FTC} fails, but oddity holds nonetheless, and yet other known
examples where the \eqref{FTC} fails, and oddity appears to hold, but is not
proved.

The third main objective of this paper is to prove Schur orthogonality
relations, like~\eqref{schur0}.  For any $\cp$-representation~$\pi'$
and any function $G\in C(\Gamma\sqcup\bGG)$, let
$\pi'(G)=\pi'\bigl(G|_{\bGG}\bigr)$. Also, for $x\in\Gamma$, let
$G^*(x)=\bar G(x^{-1})$.
\begin{theorem}\label{Schur}
  Let $\pi$~be a representation of~$\Gamma$ on~$\mcH$ which satisfies
  the hypotheses of the Duplicity Theorem. There exists a constant
  $A_\pi>0$ and a dense subspace $\mcH^\infty\subset\mcH$ of
  \emph{good vectors} of~$\mcH$ so that for any $v_1,v_2\in\mcH$,
  $v_3,v_4\in\mcH^\infty$, and $G,\tilde G\in C(\Gamma\sqcup\bGG)$ the
  following holds
  \begin{multline}\label{schur1}
  \lim_{\epsilon\to 0+} \epsilon
    \sum_{x\in\Gamma} e^{-\epsilon|x|} G(x)\tilde G^*(x)
      \langle v_1,\pi(x)v_3 \rangle
      \overline{\langle v_2,\pi(x)v_4 \rangle} \\
      =A_\pi\left(
        \langle \pi'_+(G)\iota_+ v_1,\iota_+v_2\rangle
        \overline{\langle \pi'_-(\tilde G)
          \iota_-v_3,\iota_-v_4\rangle}\right. \\
      +\left.
        \langle \pi'_-(G)\iota_-v_1,\iota_-v_2\rangle
        \overline{\langle \pi'_+(\tilde G)\iota_+v_3,
           \iota_+v_4\rangle} \right)
      \period
  \end{multline}
\end{theorem}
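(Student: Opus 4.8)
The plan is to deduce Theorem~\ref{Schur} from the analysis underlying the Duplicity Theorem. By hypothesis that theorem applies, so $\pi$~is irreducible and, up to equivalence, $\iota_+$ and $\iota_-$ are its only boundary realizations; this classification is the structural fact that will force the right-hand side of~\eqref{schur1} to be built out of $\iota_\pm$ alone.

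First I would recast the left side of~\eqref{schur1} as an operator limit. Writing the summand as $G(x)\tilde G^*(x)\langle v_1,\pi(x)\,R\,\pi(x)^{-1}v_2\rangle$, where $R$ is the rank-one operator built from $v_3,v_4$ (so that $\langle v_1,\pi(x)R\pi(x)^{-1}v_2\rangle=\langle v_1,\pi(x)v_3\rangle\overline{\langle v_2,\pi(x)v_4\rangle}$), the left side equals $\langle v_1,S_\epsilon v_2\rangle$ with $S_\epsilon=S_\epsilon(G,\tilde G;v_3,v_4)=\epsilon\sum_x e^{-\epsilon|x|}G(x)\tilde G^*(x)\,\pi(x)R\,\pi(x)^{-1}\in\mcB(\mcH)$. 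The main analytic step is to prove that $S_\epsilon$ converges in the weak operator topology as $\epsilon\to0+$. The role of the good vectors $v_3,v_4\in\mcH^\infty$ is that their matrix coefficients $x\mapsto\langle u,\pi(x)v\rangle$ decay at the full rate $(2q-1)^{-|x|/2}$, where $2q=|A|$, with no polynomial factor, so that the product of two such decays like $(2q-1)^{-|x|}$, exactly balancing the exponential growth $(2q-1)^{|x|}$ of the number of words of a given length; this keeps $\epsilon\sum_x e^{-\epsilon|x|}(\cdots)$ bounded as $\epsilon\to0+$ and reduces the existence of the limit to a Ces\`aro-summability statement for the spherical sums $\sum_{|x|=n}(\cdots)$. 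I would prove convergence first on a convenient dense subalgebra of $C(\Gamma\sqcup\bGG)$ --- the locally constant functions built from the $\one_{\bGG(x)}$ together with the finitely supported ones --- where the sum over the tree reorganizes geometrically and the asymptotic expansion of $\pi(x)$ relative to a boundary realization (the large-$|x|$ behaviour of the boundary cocycle and of the projections $\pi'(\one_{\bGG(x)})$, as developed for the Duplicity Theorem) can be inserted; a uniform-boundedness-plus-density argument then extends it to all of $C(\Gamma\sqcup\bGG)$. Since a function vanishing on $\bGG$ contributes only $O(\epsilon)$ after averaging, one also checks that the limit $S(G,\tilde G)$ depends on $G,\tilde G$ only through their restrictions to $\bGG$.

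The limit is then identified by a covariance argument in the spirit of classical Schur orthogonality. Re-indexing by $x\mapsto yx$ and noting that replacing $e^{-\epsilon|y^{-1}x|}$ by $e^{-\epsilon|x|}$ produces a weakly vanishing error (by the good-vector decay) gives $\pi(y)S(G,\tilde G;v_3,v_4)\pi(y)^{-1}=S(\lambda(y)G,\tilde G;\pi(y)v_3,\pi(y)v_4)$; taking $v_4=v_3$, $\tilde G=\one$ and $G\ge0$ makes $G\mapsto S(G,\one;v_3,v_3)$ a $\Gamma$-covariant completely positive map $C(\bGG)\to\mcB(\mcH)$ sending $\one$ to a positive scalar multiple of the identity (by irreducibility of $\pi$). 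A minimal Stinespring dilation presents it as $G\mapsto W^{*}\pi'(G)W$ for a $\cp$-representation $\pi'$ on some $\mcH'$, with $W\colon\mcH\to\mcH'$ a $\Gamma$-equivariant map whose rescaling is a boundary realization of $\pi$; the Duplicity Theorem then forces that realization to be $\sqrt{t_+}\,\iota_+\oplus\sqrt{t_-}\,\iota_-$ for some $t_\pm>0$, and the $x\mapsto x^{-1}$ symmetry of the summation (which interchanges the roles of $(G,v_1,v_2)$ and $(\tilde G,v_3,v_4)$) pins $t_+=t_-=\tfrac12$, so that $S(G,\one;v_3,v_3)=A_\pi\|v_3\|^{2}\bigl(\iota_+^{*}\pi'_+(G)\iota_+ + \iota_-^{*}\pi'_-(G)\iota_-\bigr)$ for a constant $A_\pi$. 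Running the same analysis with $\tilde G$ now general --- where the $\tilde G$-dependence enters through the operation $\tilde G\mapsto\tilde G^{*}$, which is exactly what couples the $v_3,v_4$ side of the sum to the \emph{opposite} realization --- and then polarizing in all the vector and function arguments produces~\eqref{schur1}; the constant is the same $A_\pi$, fixed by demanding that the special case $G=\tilde G=\one$, $v_1=v_2$, $v_3=v_4$ reproduce~\eqref{schur0}, and $A_\pi>0$ follows from a matching lower bound on the matrix coefficients of good vectors (again part of the Duplicity analysis).

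The main obstacle is the convergence step together with a precise account of the rearrangement of index pairings: in the summand $v_1$ is paired with $v_3$ and $v_2$ with $v_4$, with all four vectors seeing both realizations, whereas in the answer $v_1$ is paired with $v_2$ through one realization and $v_3$ with $v_4$ through the other. Showing that the $\epsilon\to0+$ average performs exactly this rearrangement --- that it neither diverges, nor fails to converge, nor leaves behind extra ``diagonal'' terms with both pairs on the same realization --- is where the Finite Trace Condition~\eqref{FTC} and the sharp two-realization asymptotics of $\pi(x)$ are genuinely needed. Once those asymptotics are in hand the rest is bookkeeping.
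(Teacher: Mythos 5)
Your skeleton is right --- rewrite the left side as a matrix coefficient of an operator-valued Abel mean, get boundedness from the good-vector property, prove convergence on the dense subalgebra generated by the $\one_x$'s and the finitely supported functions, identify the limit using the classification of boundary intertwiners from the Duplicity Theorem, and polarize --- and this is essentially the architecture of the paper's proof. But the two steps you yourself flag as the crux are not actually supplied, and the tools you propose for them are not available. First, you assert that good vectors have matrix coefficients decaying pointwise like $(2q-1)^{-|x|/2}$ with no polynomial factor, and you propose to prove convergence by inserting ``the asymptotic expansion of $\pi(x)$ relative to a boundary realization.'' Neither of these exists in this framework: the definition of a good vector ($\mcT^N E\leq CF$) yields only the \emph{spherical} bound $\sum_{|x|=n}\pi(x)v\otimes\overline{\pi(x)v}\leq C\,\ID$ (Lemma~\ref{gvb}, Corollary~\ref{gvb-4}), which suffices for boundedness but gives no pointwise asymptotics, and there is no cocycle expansion anywhere in the Duplicity analysis. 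The actual convergence mechanism is Proposition~\ref{DupLimP}: the Abel means $\epsilon\sum_n e^{-\epsilon n}\mcT^nF_0$ are bounded, every weak limit point is a $\mcT$-fixed tuple dominated by $CF$, hence by Corollary~\ref{SubMuP} of the form $t_+F_++t_-F_-$, and the coefficients are pinned down by the trace pairings $(F_L,F_\pm)=(F_0,F_\pm)$ from Proposition~\ref{limP} --- so they are independent of the subsequence and the full limit exists. Taking $(F_0)_b=v\otimes\bar v$ and reading off the $a$-component against $w$ (Proposition~\ref{limite-con-ab}) \emph{is} the Schur identity for $G=\one_a$, $\tilde G=\one-\one_b$; no separate convergence argument is needed.

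Second, your determination of the coefficients is where the argument would actually fail. The symmetry $x\mapsto x^{-1}$ cannot be used as you describe, since it exchanges $(G,v_1,v_2)$ with $(\tilde G,v_3,v_4)$ and $v_1,v_2$ are not assumed good; and in any case equality $t_+=t_-$ only occurs for $\tilde G=\one$. For general $\tilde G$ the coefficients are \emph{not} equal and are not produced by the adjoint operation $\tilde G\mapsto\tilde G^*$: the cross-pairing in~\eqref{schur1} --- the $\pi'_-$ evaluation on $(v_3,v_4)$ multiplying the $\pi'_+$ term on $(v_1,v_2)$ and vice versa --- comes from the explicit formula
\begin{equation*}
  F_L=\frac{(F_0,F_-)F_++(F_0,F_+)F_-}{(F_+,F_-)}\comma\qquad
  (F_0,F_\pm)=\langle\mu_\pm(\one-\one_b)v,v\rangle\comma
\end{equation*}
where the crossing is forced by $(F_+,F_+)=(F_-,F_-)=0$ (perfectness of $\iota_\pm$, Proposition~\ref{(F,F)=0}) in the system $(F_L,F_\pm)=(F_0,F_\pm)$. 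Without this trace-inner-product computation you have no way to exclude the ``diagonal'' terms you worry about, nor to see that $A_\pi=1/(F_+,F_-)$. The remaining ingredients you would still need --- the translation lemma (Lemma~\ref{lemma-key-trasl}) to pass from $\one_a$, $\one-\one_b$ to general $\one_z$, $\one_y$, and the uniform bound (Corollary~\ref{cor-key8}) to pass to all of $C(\Gamma\sqcup\bGG)$ --- are routine once the base case is in hand, but the base case is precisely the part your proposal leaves to asymptotics that do not exist.
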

On the right-hand side of~\eqref{schur1} one sees the two actions
of~$C(\bGG)$ on~$\mcH$, namely $\iota_\pm^*\pi'_\pm(\cdot)\iota_\pm$,
but the sum on the left-hand side is calculated using only the matrix
coefficients of the original~$\Gamma$-representation, $\pi$. If one
knew a priori what the space of good vectors was, \eqref{schur1} would
provide a canonical method for calculating the two boundary
realizations starting with~$\pi$. A slightly simpler formula holds for
representations satisfying the conditions of the Oddity Theorem.

Section~\ref{bdry-rlzn} discusses crossed-product algebras,
representations weakly contained in the regular representation, and
the proof of Proposition~\ref{tempered}. Section~\ref{imfS} introduces
some general machinery applicable to boundary
realizations. Section~\ref{inner-product} explains a certain inner
product which is used in the main proofs. Section~\ref{weak-limit}
discusses a certain limit closely related to the left-hand side
of~\eqref{schur1}. Section~\ref{good-vectors} discusses the subspace
$\mcH^\infty\subseteq\mcH$ of good vectors. Section~\ref{main-proofs}
has the proofs of the Duplicity Theorem and the Oddity
Theorem. Section~\ref{schur-sec} proves~\eqref{schur0}
and~\eqref{schur1}.

We follow the convention that a positive constant denoted by~$C$ may
change its exact value from one line to the next. In general, $\one_S$
is the characteristic function of the set~$S$. However, if
$x\in\Gamma$, we abbreviate and write $\one_x$ for the characteristic
function of~$\bGG(x)$. Simply~$\one$ usually stands for~$\one_{\bGG}$.
We assume that all our Hilbert spaces are separable.

\section{The crossed product $\cp$ and
  boundary realizations}\label{bdry-rlzn}

The reader who can do without the proof of Proposition~\ref{tempered}
can skip this section. Or one can read up through the definition of
crossed-product $C^*$-algebras, and skip the rest.

We said already that when a $\Gamma$-repre\-sentation and a
$C(\bGG)$-repre\-sentation act on the same Hilbert space and satisfy
the compatibility condition~\eqref{bdry-rpn}, then this pair of
representations can be thought of as a representation of a certain
crossed-product $C^*$-algebra, $\cp$. Here we define crossed-product
algebras and clarify the above assertion. After that we give some
further definitions. All of this is preparation for the proof of
Proposition~\ref{inclusion-in-boundary} which will give us half of
Proposition~\ref{tempered}.

Assume that $\GG$ acts on a $C^*$-algebra $\mcA$ by isometric
automorphisms 
$\lambda:\GG\to\operatorname{Aut}(\mcA)$.
\begin{definition}\label{covariant}
A \defn{covariant representation} of $(\GG,\mcA)$ on a Hilbert space
$\mcH$ is a triple $(\pi,\alpha,\mcH)$ where
\begin{itemize}
\item $\pi:\GG\to\mcU(\mcH)$ is a unitary representation of $\GG$,
\item $\alpha:\mcA\to \mcB(\mcH)$ is a $ ^*$-representation of $\mcA$,
\item $\pi(x)\alpha(G)\pi(x)^{-1}=\alpha(\lambda(x)G)$
    for every $G\in\mcA$ and $x\in \GG$.
\end{itemize}
\end{definition}
Let $\mcA[\Gamma]$ denote the space of finitely supported functions from
$\Gamma $ to $\mcA$:
\begin{equation*}
  \mcA[\GG]=
  \left\{\text{finite sums $\sum_i G_i\delta_{x_i}$}
    \st x_i\in\GG,\,G_i\in\mcA\right\}
\end{equation*}
where $\delta_x$ denotes the Kroneker function at $x\in\GG$.  We endow
$\mcA[\GG]$ with a $C^*$-algebra structure as follows: the sum of two
elements is defined in the obvious way (as functions on $\GG$) while
for the multiplication and the adjoint we use
\begin{gather*}
G_1\delta_x\cdot G_2\delta_y=
G_1(\lambda(x)G_2) \delta_{xy} \\
(G\delta_x)^* =(\lambda(x^{-1})G^*)\delta_{x^{-1}}
\end{gather*}
and extend by linearity.

For any covariant representation $(\pi,\alpha,\mcH)$ of $(\GG,\mcA)$
and, for $\xi=\sum_i G_i\delta_{x_i}$ define
\begin{equation*}
(\pi\ltimes\alpha)(\xi)=\sum_i\alpha(G_i)\pi(x_i)\period
\end{equation*}
Using the covariance relation in Definition~\ref{covariant}, one sees
that $\pi\ltimes\alpha$ defines a $*$-representation of $\mcA[\GG]$.
Define a norm on $\mcA[\GG]$ by
\begin{equation*}
\norm{\xi}=\sup\norm{\pi\ltimes\alpha(\xi)}
\end{equation*}
where the supremum is taken over all covariant representations of
$(\GG, \mcA)$.  The \defn{full crossed product $C^*$-algebra
  $\GG\ltimes\mcA$} is defined as the completion of $\mcA[\GG]$ with
respect to the above norm.

In this paper we are interested in the following cases:
\begin{itemize}
\item $\mcA=\CC$, the complex numbers, with the trivial action of
  $\GG$.  In this case $\GG\ltimes\CC$ is $C^*(\GG)$, the \defn{full
  $C^*$-algebra of $\GG$}.
\item $\mcA=C(K)$ where $K$ is a second countable compact space on
  which $\GG$ acts by homeomorphisms.
\end{itemize}

\begin{remark}
Let $\one_K$ denote the function identically one on the compact space
$K$.
The inclusion $\CC\to C(K)$ defined by $z\to z\one_K$ induces a map
$\psi:C^*(\GG)\to \GG\ltimes C^*(K)$ defined by
\begin{equation*}
\psi(\sum_i c_i\delta_{x_i})=
\sum_ic_i\one_K\delta_{x_i}\period
\end{equation*}
It is trivial to check that this formula gives a $*$-homomorphism
$\CC[\Gamma]\to C(K)[\Gamma]$. To pass to the completions, one needs
$\psi$~to be norm-decreasing, and this follows because any covariant
representation $(\pi,\alpha,\mcH)$ for $\Gamma\ltimes C(K)$ restricts
to a covariant representation $(\pi,\alpha|_{\CC\one_K},\mcH)$ for
$C^*(\Gamma)=\Gamma\ltimes\CC$.

\end{remark}

\begin{definition}\label{wk-cont}
A unitary representation $\pi:\Gamma\to\mcU(\mcH)$ is \defn{weakly
  contained in the regular representation~$\pir$} if for every
$v\in\mcH$ there exists a sequence $v_n\in\ell^2(\Gamma)$ such that
\begin{equation*}
\langle\pi(x)v,v\rangle=\lim_{n\to\infty}
\langle\pir(x)v_n,v_n\rangle\quad
\text{pointwise.}
\end{equation*}
\end{definition}
Based on any $\Gamma$-representation~$\pi$, we define a
$C^*(\Gamma)$-representation, also denoted~$\pi$. Let
$\pi:C^*(\Gamma)\to\mcB(\mcH)$ be the extension of the original~$\pi$
by linearity and continuity.  The function
$\phi(x)=\langle\pi(x)v,v\rangle$ used in the above definition is
known as the \defn{matrix coefficient} associated to~$v$. This also
extends by linearity and continuity to a functional
$\phi:C^*(\Gamma)\to\CC$ given by
$\phi(\xi)=\langle\pi(\xi)v,v\rangle$. If we choose~$v$ with
$\norm{v}=1$, this functional is the \defn{state} corresponding
to~$v$.

Expressing Definition~\ref{wk-cont} using
$C^*(\Gamma)$-representations and states, one finds that a
representation $\pi:C^*(\Gamma)\to\mcB(\mcH)$ is weakly contained in
the regular representation if and only if, for every vector $v\in
\mcH$ with $\norm{v}=1$, the corresponding state $\langle
\pi(\cdot)v,v\rangle$ is a limit, in the weak*-topology, of states
associated with the regular representation, that is of states of the
form $\langle\pir(\cdot)v_n,v_n\rangle$ with~$v_n\in\ell^2(\GG)$.

The proof of the following proposition depends on basic $C^*$-algebra
theory. Only here do we make use of that theory, or of $C^*(\Gamma)$,
or of the fact that~$\cpK$ is a $C^*$-algebra.
\begin{proposition}\label{inclusion-in-boundary} 
Let $\Gamma$~be a discrete countable group 
  acting on a second countable, compact space~$K$. Suppose that the
  unitary representation $\pi:\Gamma\to\mcU(\mcH)$~is weakly contained
  in the regular representation. Then there exists an isometric
  $\Gamma$-inclusion $\iota:\mcH\to\mcH'$ where $\mcH'$ is the
  representation space of a $\cpK$-representation.
\end{proposition}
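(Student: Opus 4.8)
The plan is to build the $\cpK$-representation $(\pi',\alpha',\mcH')$ by a GNS-type construction, using the hypothesis of weak containment to control norms. Fix an orthonormal basis $(e_i)$ of $\mcH$. For each $i$, weak containment gives a sequence $v_{i,n}\in\ell^2(\Gamma)$ with $\langle\pi(x)e_i,e_i\rangle=\lim_n\langle\pir(x)v_{i,n},v_{i,n}\rangle$ pointwise; more useful is the reformulation in terms of states on $C^*(\Gamma)$, so that the state $\phi_i=\langle\pi(\cdot)e_i,e_i\rangle$ on $C^*(\Gamma)$ is a weak$^*$-limit of states $\psi_{i,n}=\langle\pir(\cdot)v_{i,n},v_{i,n}\rangle$. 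The key point is that each $\psi_{i,n}$, being a state coming from the regular representation, extends to a state on $\cpK$ via the map $\psi:C^*(\Gamma)\to\cpK$ described in the Remark — concretely, $\pir$ is the $\Gamma$-part of a covariant representation of $(\Gamma,C(K))$ (for instance, let $C(K)$ act on $\ell^2(\Gamma)$ by multiplication after choosing any point of $K$, or better, work on $\ell^2(\Gamma)\otimes L^2(K,\nu)$ for a quasi-invariant $\nu$), and $v_{i,n}$, viewed as a vector there, yields a state $\tilde\psi_{i,n}$ on $\cpK$ restricting to $\psi_{i,n}$ on the image of $\psi$.

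Next I would pass to the limit. The state space of the unital $C^*$-algebra $\cpK$ is weak$^*$-compact, so after passing to a subnet (or using separability to get a subsequence) each $\tilde\psi_{i,n}$ converges weak$^*$ to a state $\tilde\phi_i$ on $\cpK$, and by construction $\tilde\phi_i$ restricts on $\psi(C^*(\Gamma))$ to $\phi_i$. To recover the whole Hilbert space $\mcH$ at once rather than one vector at a time, I would instead work with the single faithful "amplified" object: consider the state $\Phi=\sum_i 2^{-i}\phi_i$ on $C^*(\Gamma)$ (normalized), or better, handle all matrix coefficients $\langle\pi(\cdot)e_i,e_j\rangle$ simultaneously by applying the argument to the vectors $e_i+e_j$ and $e_i+\sqrt{-1}\,e_j$. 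The cleanest route: the restriction-of-states argument shows that the completely positive map $C^*(\Gamma)\to\mcB(\mcH)$ given by $\pi$ itself factors through a ucp map $\cpK\to\mcB(\mcH)$. Indeed, each $\psi_{i,n}$ extending to $\cpK$ means the ucp map $\pir:C^*(\Gamma)\to\mcB(\ell^2\Gamma)$ (amplified suitably) extends to $\cpK$; taking a point-weak$^*$ limit of these ucp extensions — using that the unit ball of $\mcB(\mcH)$ is weak$^*$-compact and that the $v_{i,n}$ were chosen to converge on matrix coefficients — produces a ucp map $\Theta:\cpK\to\mcB(\mcH)$ with $\Theta\circ\psi=\pi$.

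Finally, apply the Stinespring dilation theorem to the ucp map $\Theta:\cpK\to\mcB(\mcH)$: there is a Hilbert space $\mcH'$, a $\ast$-representation $\rho$ of $\cpK$ on $\mcH'$, and an isometry $V:\mcH\to\mcH'$ with $\Theta(a)=V^*\rho(a)V$. Cutting down $\mcH'$ to the closed $\rho(\cpK)$-invariant subspace generated by $V\mcH$ loses nothing. Write $\pi'$ for the $\Gamma$-part and $\alpha'$ for the $C(K)$-part of the covariant pair corresponding to $\rho$, and set $\iota=V$. Since $\Theta\circ\psi=\pi$, for $x\in\Gamma$ we get $V^*\pi'(x)V=\Theta(\one_K\delta_x)=\pi(x)$, a unitary; a standard argument (an isometry whose compression to a unitary is again unitary must commute with $V$, i.e. $V\mcH$ is $\pi'$-invariant and $V$ intertwines $\pi$ with $\pi'|_{V\mcH}$) shows $\iota$ is an isometric $\Gamma$-inclusion. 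This gives exactly the conclusion.

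The main obstacle I anticipate is the first step — ensuring that the approximating states $\psi_{i,n}$ coming from the regular representation genuinely and compatibly extend to states (or ucp maps) on the larger algebra $\cpK$, uniformly enough that the limit survives. The Remark supplies the algebraic map $\psi:C^*(\Gamma)\to\cpK$, but one must check that $\pir$ is literally the restriction of a covariant representation of $(\Gamma,C(K))$ and organize the bookkeeping so that a single limiting ucp map on $\cpK$ captures all of $\pi$ simultaneously; separability of $\mcH$ (assumed in the paper) and of $\cpK$ is what makes the diagonal/subsequence argument go through.
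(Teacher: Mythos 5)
Your overall strategy is the paper's: you use the evaluation\hbox{-}at\hbox{-}a\hbox{-}point covariant representation on $\ell^2(\Gamma)$ to promote states of the regular representation to states of $\cpK$, pass to a weak$^*$ limit, and then run a GNS/Stinespring construction. The one genuine gap is exactly the step you flag at the end: assembling the per-vector limits into a single object. Weak containment, as defined, hands you for each \emph{single} vector $v$ a sequence $v_n\in\ell^2(\Gamma)$ approximating the \emph{diagonal} coefficient $\langle\pi(\cdot)v,v\rangle$; the sequences attached to $e_i$, $e_j$, $e_i+e_j$, $e_i+\sqrt{-1}\,e_j$ are unrelated, so the limit states you extract for these vectors need not cohere into the matrix coefficients of one representation (their GNS spaces are a priori disjoint), and polarization cannot repair this after the fact. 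Likewise, the ``point-weak$^*$ limit of ucp extensions'' is not yet defined: $\pir\ltimes\pir'$ is one fixed representation of $\cpK$ on $\ell^2(\Gamma)$, so to get operator-valued approximants $\Theta_n(a)=W_n^*(\pir\ltimes\pir')(a)W_n$ converging to $\pi$ on the image of $C^*(\Gamma)$ you would need contractions $W_n:\mcH\to\ell^2(\Gamma)$ (or an amplification) reproducing \emph{all} matrix coefficients of $\pi$ simultaneously, and the definition of weak containment does not supply these.

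The paper's fix is the standard one and is precisely the ingredient you are missing: first decompose $\pi$ into a direct sum of cyclic representations (the conclusion for a direct sum follows from the conclusion for each summand), so that one may assume $\mcH$ has a single cyclic unit vector $v$. Then one state $\phi$ of $C^*(\Gamma)$ determines $\pi$, one sequence $(v_n)$ suffices, and the GNS construction applied to the single limit functional $\phi'$ on $\cpK$ produces $\mcH'$ and the isometry $\iota$ in one stroke; no gluing and no Stinespring or Arveson machinery is needed. (Your ucp route can be completed by invoking the equivalence of weak containment with $\norm{\pi(\xi)}\leq\norm{\pir(\xi)}$ for $\xi\in\CC[\Gamma]$, so that $\pi$ factors through the image of $C^*(\Gamma)$ inside $(\pir\ltimes\pir')(\cpK)$, followed by Arveson extension and Stinespring dilation; but proving that equivalence is itself the same cyclic-vector argument, so nothing is saved.)
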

\begin{proof}
  Fix any~$k_0\in K$ and define a~$C(K)$-action on~$\ell^2(\Gamma)$ by
  \begin{equation*}
    (\pir'(G)f)(x)=G(xk_0)f(x) \period
  \end{equation*}
  Together with $\pir$, this gives a covariant representation as
  per Definition~\ref{covariant} and so gives a representation
  $\pir\ltimes\pir'$  of~$\cpK$ on~$\ell^2(\Gamma)$.

  If the theorem is proved for each summand of a direct sum of
  representations, it is also proved for the sum. Consequently, we may
  assume that $\mcH$ has a unit vector~$v$ cyclic for~$\pi$.  Let
  $\phi$~be the state of~$C^*(\Gamma)$ corresponding to~$v$. By
  hypothesis, $\phi$~is the weak*-limit of a sequence $(\phi_n)_n$,
  where each $\phi_n$ is the state of~$C^*(\Gamma)$ corresponding to a
  vector~$v_n\in\ell^2(\Gamma)$. From the representation
  $\pir\ltimes\pir'$ and~$v_n$ one obtains also a state~$\phi_n'$
  of~$\cpK$. The restriction of~$\phi'_n$ to~$C^*(\Gamma)$ via the map
  $C^*(\Gamma)\to\cpK$ is~$\phi_n$.  The hypotheses on~$\Gamma$
  and~$K$ guarantee that $\Gamma\ltimes C(K)$ is separable and
  unital. So, passing to a subsequence, we may assume that $\phi'_n$
  weakly approaches some positive functional~$\phi'$ on~$\cpK$. The
  restriction of~$\phi'$ to~$C^*(\Gamma)$ will be~$\phi$. Apply the
  Gelfand--Naimark procedure to~$\phi'$ to generate a representation
  $\pi':\cpK\to\mcB(\mcH')$.  The state~$\phi'$ is the state of a
  certain vector $v'\in\mcH'$, cyclic for~$\pi'$. If we restrict
  $\pi'$ to~$C^*(\Gamma)$, the state corresponding to~$v'$ is the
  restriction of~$\phi'$, namely~$\phi$. This was also the state
  corresponding to~$v$ for~$\pi$, so there exists a unique
  $\Gamma$-isometry $\iota:\mcH\to\mcH'$ with $\iota(v)=v'$.
\end{proof}

Proposition \ref{inclusion-in-boundary} proves one implication of
Proposition~\ref{tempered}. The proof of the other implication has
been known for some time and we shall give references:
\begin{itemize}
\item
The action of~$\Gamma$ on~$\bGG$ is \emph{topologically amenable}: see
\cite{Adams} for a general hyperbolic group or the Appendix
of~\cite{K-S1} for the specific case of a free group.
\item \cite[Chapter~X, Theorem~3.8 and 3.15]{T} explains how to
  realize a $\cp$-representation space as~$L^2(\bGG,d\mu)$.
\item
Topological amenability implies that every unitary representation of
$\GG$ that is realized on~$L^2(\bGG,d\mu)$ is weakly contained in the
regular representation. For the precise statement see~\cite{Ku}.
\end{itemize}

%
%

\section{Boundary intertwiners $\iota$, maps~$\mu$, and vectors~$F$}
  \label{imfS}
Throughout this section we will be dealing with a fixed tempered
unitary representation $\pi:\Gamma\to\mcU(\mcH)$.

\subsection{From~$\iota$, to~$\mu$, to~$F$, and back again}
We are basically interested in the \defn{boundary realizations}
of~$\pi$, that is isometric $\Gamma$-maps $\iota:\mcH\to\mcH'$ where
\begin{itemize}
\item $\mcH'$ is the representation space of a $\cp$-representation~$\pi'$,
\item and $\iota(\mcH)$ is cyclic for the action of~$\cp$ on~$\mcH'$.
\end{itemize}
However it is often convenient to drop the condition that $\iota$~be
an isometric inclusion, and to consider \emph{all} $\Gamma$-maps
$\mcH\to\mcH'$ satisfying these two conditions. Such maps are called
\defn{boundary intertwiners} of~$\pi$. As is natural, we call two
boundary intertwiners~$\iota_1:\mcH\to\mcH_1'$
and~$\iota_2:\mcH\to\mcH_2'$ \defn{equivalent} if there exists a
unitary $\cp$-equivalence $J:\mcH'_1\to\mcH'_2$ such that $\iota_2=J\iota_1$,
and we shall write $\iota_1\sim\iota_2$.

\begin{remark} Because of the covariance condition for
  $\cp$-representations, $\pi'(C(\bGG))\iota(\mcH)$ is $\pi'(\Gamma)$
  invariant.  Therefore $\iota(\mcH)$ is cyclic for the action
  of~$\cp$ if and only if it is cyclic for the action of~$C(\bGG)$.
\end{remark}

We are now going to establish the correspondence between the set of
(equivalence classes of) boundary intertwiners of~$\pi$ and two other
sets of objects. To establish a certain parallelism, we will call the
boundary intertwiners~\defn{iota-intertwiners}.
\begin{definition}\label{iota-mu-F} An \defn{iota-intertwiner}
  of~$\pi$ is none other than a boundary intertwiner of~$\pi$. A
  \defn{mu-map} for~$\pi$ is a linear map $\mu:C(\bGG)\to\mcB(\mcH)$
  which takes non-negative functions to positive~semidefinite
  operators (a positive map) and also satisfies
  \begin{equation}\label{muCondE}
    \pi(x) \mu(G) \pi(x)^{-1}=\mu(\lambda(x) G)
    \qquad\text{for $x\in\Gamma$.}
  \end{equation}
  An \defn{Eff-vector} for~$\pi$ is a vector~$F$ of
  positive~semidefinite operators in~$\mcB(\mcH)$, indexed by~$A$,
  and satisfying $\mcT F=F$ where
  \begin{equation}\label{FDefE}
  (\mcT F)_a=\sum_{b\in A \st ab\neq 1} \pi(a)F_b\pi(a)^{-1} \period
    \end{equation}
\end{definition}

We will use several times the following standard elementary lemma.
\begin{lemma}\label{PosIsBdded}
  If $\mu:C(\bGG)\to\mcB(\mcH)$ is a positive map, then there exists a
  constant $C>0$ such that $\norm{\mu(G)}\leq C\norm{G}_\infty$.
\end{lemma}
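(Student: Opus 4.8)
The plan is to use the order structure of $C(\bGG)$ together with the fact that a positive semidefinite bounded operator $T$ satisfies $T\leq\norm{T}\,\ID$; the key move is to sandwich $\mu(G)$ between $-\mu(\one)$ and $\mu(\one)$. This is the standard argument that a positive linear map between $C^*$-algebras is automatically bounded, which becomes especially clean when the domain is commutative and unital, as $C(\bGG)$ is (here $\bGG$ is compact, so $\one=\one_{\bGG}\in C(\bGG)$ and $\mu(\one)\in\mcB(\mcH)$ is a genuine bounded operator). First I would observe that $\mu$ carries real-valued functions to self-adjoint operators: a real-valued $G$ splits as $G=G^+-G^-$ with $G^\pm=\max(\pm G,0)$ non-negative in $C(\bGG)$, so $\mu(G)=\mu(G^+)-\mu(G^-)$ is a difference of positive semidefinite operators and hence self-adjoint (recall that positive semidefinite operators on a complex Hilbert space are automatically self-adjoint).

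Next, for a real-valued $G$ with $\norm{G}_\infty\leq 1$ we have $\one\pm G\geq 0$ pointwise on $\bGG$, so positivity of $\mu$ gives $\mu(\one)\pm\mu(G)\geq 0$, that is $-\mu(\one)\leq\mu(G)\leq\mu(\one)$. Combining this with $\mu(\one)\leq\norm{\mu(\one)}\,\ID$ yields $-\norm{\mu(\one)}\,\ID\leq\mu(G)\leq\norm{\mu(\one)}\,\ID$, and since $\mu(G)$ is self-adjoint this forces $\norm{\mu(G)}\leq\norm{\mu(\one)}$. Rescaling an arbitrary nonzero real-valued $G$ by $\norm{G}_\infty$ then gives $\norm{\mu(G)}\leq\norm{\mu(\one)}\,\norm{G}_\infty$.

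Finally, for general $G\in C(\bGG)$ write $G=G_1+iG_2$ with $G_1,G_2$ real-valued and $\norm{G_j}_\infty\leq\norm{G}_\infty$; linearity of $\mu$ and the previous step give $\norm{\mu(G)}\leq\norm{\mu(G_1)}+\norm{\mu(G_2)}\leq 2\norm{\mu(\one)}\,\norm{G}_\infty$, so $C=2\norm{\mu(\one)}$ works. There is no genuine obstacle here; the only point one must not skip is the reduction to self-adjoint $G$ before invoking the order inequality, since the sandwich estimate $\norm{\mu(G)}\leq\norm{\mu(\one)}$ relies on $\mu(G)$ being self-adjoint. (With a sharper splitting of self-adjoint elements one can reduce the constant to $\norm{\mu(\one)}$, but this refinement is not needed in the sequel.)
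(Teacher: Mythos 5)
Your proof is correct and follows essentially the same route as the paper's: both exploit positivity together with the order bound coming from $\norm{G}_\infty\one$ to dominate $\mu(G)$ by a multiple of $\mu(\one)$, then reduce the general case by decomposing $G$. The only (immaterial) difference is that you sandwich a real-valued $G$ directly via $\one\pm G\geq 0$, while the paper splits into positive and negative parts and bounds each, yielding a slightly larger constant.
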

\begin{proof}
  If $G\geq 0$, then $0\leq G\leq\norm{G}_\infty\one$, hence
  $0\leq\mu(G)\leq\norm{G}_\infty\mu(\one)$, hence
  $\norm{\mu(G)}\leq\norm{\mu(\one)}\norm{G}_\infty$. To treat an
    arbitrary~$G$, write it as the sum of its positive and negative
    real and imaginary parts.
\end{proof}

If an iota-intertwiner $\iota:\mcH\to\mcH'$ is given, we associate to it the
following mu-map:
\begin{equation}\label{mu-defn}
  \mu(G)=\iota^*\pi'(G)\iota \period
\end{equation}
To show that this is a mu-map, one checks positivity, which is
trivial, and covariance
\begin{multline*}
\pi(x)\mu(G)\pi(x)^{-1}  
=\pi(x)\iota^*\pi'(G)\iota\pi(x)^{-1} \\
=\iota^*\pi'(x)\pi'(G)\pi'(x)^{-1}\iota 
=\iota^*\pi'(\lambda(x)G)\iota
=\mu(\lambda(x)G) \period
\end{multline*}

\begin{lemma}\label{Equiv}
Given iota-intertwiners $(\iota_1,\mcH_1')$ and $(\iota_2,\mcH_2')$ of
$\pi$ with associated mu-maps $\mu_1$ and $\mu_2$, then
$\iota_1\sim\iota_2$ if and only if $\mu_1=\mu_2$.
\end{lemma}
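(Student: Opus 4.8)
The plan is to prove the two implications separately, with the forward direction being essentially a one-line computation and the reverse direction requiring a genuine construction.

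First suppose $\iota_1\sim\iota_2$, so there is a unitary $\cp$-equivalence $J:\mcH_1'\to\mcH_2'$ with $\iota_2=J\iota_1$. Then $J$ intertwines the $C(\bGG)$-actions, i.e. $\pi_2'(G)J=J\pi_1'(G)$, and $J^*=J^{-1}$ intertwines them the other way. Hence
\[
  \mu_2(G)=\iota_2^*\pi_2'(G)\iota_2
  =\iota_1^*J^*\pi_2'(G)J\iota_1
  =\iota_1^*\pi_1'(G)\iota_1=\mu_1(G),
\]
so $\mu_1=\mu_2$. This direction uses nothing beyond the definitions.

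For the converse, assume $\mu_1=\mu_2=:\mu$. I want to build a unitary $J:\mcH_1'\to\mcH_2'$ with $J\iota_1=\iota_2$ intertwining both the $\Gamma$- and $C(\bGG)$-actions. The natural attempt is to define $J$ on the dense subspace $\pi_1'(C(\bGG))\iota_1(\mcH)\subseteq\mcH_1'$ (dense because $\iota_1(\mcH)$ is cyclic for $C(\bGG)$) by the formula
\[
  J\bigl(\pi_1'(G)\iota_1 v\bigr)=\pi_2'(G)\iota_2 v
  \qquad\text{for }G\in C(\bGG),\ v\in\mcH.
\]
To see this is well-defined and isometric, one computes, for finite sums $\sum_i G_i\otimes v_i$, that
\[
  \Bigl\langle \sum_i\pi_1'(G_i)\iota_1 v_i,\ \sum_j\pi_1'(G_j)\iota_1 v_j\Bigr\rangle
  =\sum_{i,j}\bigl\langle \iota_1^*\pi_1'(\bar G_j G_i)\iota_1 v_i,\ v_j\bigr\rangle
  =\sum_{i,j}\bigl\langle \mu(\bar G_j G_i) v_i,\ v_j\bigr\rangle,
\]
and the same computation with subscript $2$ gives the identical right-hand side, since $\mu_1=\mu_2$. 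Thus the two Gram matrices agree, so $J$ extends to a well-defined isometry on the closure, which is all of $\mcH_1'$; a symmetric argument (or surjectivity onto the dense set $\pi_2'(C(\bGG))\iota_2(\mcH)$) shows $J$ is onto, hence unitary. That $J$ intertwines the $C(\bGG)$-action is immediate from the defining formula, since $\pi_1'(G')\pi_1'(G)\iota_1 v=\pi_1'(G'G)\iota_1 v$. That $J$ intertwines the $\Gamma$-action follows from the covariance relation~\eqref{bdry-rpn}: $\pi_i'(x)\pi_i'(G)\iota_i v=\pi_i'(\lambda(x)G)\pi_i'(x)\iota_i v=\pi_i'(\lambda(x)G)\iota_i\pi(x)v$ (using that $\iota_i$ is a $\Gamma$-map), and applying $J$ to the $i=1$ version lands on the $i=2$ version. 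Finally $J\iota_1 v=J(\pi_1'(\one)\iota_1 v)=\pi_2'(\one)\iota_2 v=\iota_2 v$, so $J\iota_1=\iota_2$ and $\iota_1\sim\iota_2$.

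The one point requiring a little care — the main (minor) obstacle — is the well-definedness of $J$: one must make sure the Gram-matrix identity above really does only involve $\mu$ and not any other data of $\iota_i$ or $\pi_i'$. This works precisely because $\pi_i'$ is a $\ast$-representation of the commutative algebra $C(\bGG)$, so $\pi_i'(G_j)^*\pi_i'(G_i)=\pi_i'(\bar G_j G_i)$, collapsing everything into a single application of $\mu$. No positivity of $\mu$ beyond what is automatic is needed here; the identity $\mu_1=\mu_2$ does all the work.
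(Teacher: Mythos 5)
Your proposal is correct and follows essentially the same route as the paper: the forward direction is the same one-line computation, and the converse defines $J$ on the dense subspace $\pi_1'(C(\bGG))\iota_1(\mcH)$ by the same formula, verifies well-definedness and isometry via the identical Gram-matrix computation reducing everything to $\mu$, and checks the two intertwining properties in the same way. No gaps; nothing further to add.
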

\begin{proof}
Recall that $\iota_1\sim\iota_2$ if and only if there exists a unitary 
$\cp$-map $J:\mcH_1'\to \mcH_2'$ such that this diagram commutes:
\begin{equation*}
\begin{tikzcd}[row sep=tiny]
  & \mcH_1'\arrow[dd,"J"] \\
    \mcH\arrow[ru,"\iota_1" pos=0.7] \arrow[rd,"\iota_2"' pos=0.7] \\
  & \mcH'_2
\end{tikzcd}
\end{equation*}
When such a $J$ exists it is obvious that
\begin{equation*}
\mu_2(G) =
\iota_2^*\pi_2'(G)\iota_2 = \iota_1^*J^*\pi_2'(G)J\iota_1 =
\iota^*_1\pi'_1(G)\iota_1 = \mu_1(G) \period
\end{equation*}

Assume now that $\mu_1=\mu_2$. We shall first define $J$ on the dense
subspace of $\mcH_1'$ consisting of finite linear combinations
$\sum_j\pi_1'(G_j)\iota_1(v_j)$ with $G_j\in C(\bGG)$ and $v_j\in
\mcH$ by letting
\begin{equation} 
J\left(\sum_j\pi_1'(G_j)\iota_1(v_j)\right)=
\sum_j\pi_2'(G_j)\iota_2(v_j)
  \period
\end{equation}
Since
\begin{multline*}
\| \sum_j\pi_1'(G_j)\iota_1(v_j)\|^2  
=\sum_{j,k}\langle\pi_1'(G_j)\iota_1(v_j),\pi_1'(G_k)\iota_1(v_k)\rangle \\
=\sum_{j,k}\langle\iota_1^*\pi_1'(\overline{G_k}G_j)\iota_1(v_j),v_k\rangle
=\sum_{j,k}\langle\mu_1(\overline{G_k}G_j)v_j,v_k\rangle \\
=\sum_{j,k}\langle\mu_2(\overline{G_k}G_j)v_j,v_k\rangle
=\| \sum_j\pi_2'(G_j)\iota_2(v_j)\|^2 \comma
\end{multline*}
$J$ is well defined and extends to an isometry from $\mcH_1'$ to
$\mcH_2'$.  Likewise we see that $J^{-1}$ exists and is isometric,
so $J$~is unitary.  It follows from the definition that
$J$~intertwines the two actions of~$C(\bGG)$ on the dense set
$\sum_j\pi_1'(G_j)\iota_1(v_j)$ and hence everywhere.  Finally, to see
that $J$~is a $\GG$-map compute
\begin{multline*}
\pi_2'(x)J\left(\sum_j\pi_1'(G_j)\iota_1(v_j)\right)=
\pi_2'(x)\left(\sum_j\pi_2'(G_j)\iota_2(v_j)\right)\\
=\sum_j\pi_2'(x)\pi_2'(G_j)\pi_2'(x)^{-1}\pi_2'(x)\iota_2(v_j)
=\sum_j\pi_2'(\lambda(x)G_j)\iota_2(\pi(x)v_j)\\
= J\left(\sum_j\pi_1'(\lambda(x)G_j)\iota_1(\pi(x)v_j)\right)
= J \pi_1'(x)\left(\sum_j\pi_1'(G_j)\iota_1(v_j)\right)
 \period
\end{multline*}
\end{proof}

\begin{proposition}\label{mu-gives-iota}
Assume that $\mu:C(\bGG)\to\mcB(\mcH)$ is a mu-map for $\pi$.  Then
there exists an iota-intertwiner $\iota$ such that $\mu$ is the mu-map
associated to $\iota$.
\end{proposition}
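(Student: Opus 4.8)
The natural approach is a GNS-type construction. Given the mu-map $\mu$, I want to build a Hilbert space $\mcH'$, a $\cp$-representation $\pi'$ on it, and an isometric $\Gamma$-map $\iota:\mcH\to\mcH'$ with $\iota^*\pi'(G)\iota=\mu(G)$ and $\iota(\mcH)$ cyclic for $C(\bGG)$. The space $\mcH'$ should be a completion of the algebraic tensor product $C(\bGG)\otimes\mcH$ (or equivalently, of $C(\bGG)[\mcH]$, finite sums $\sum_j G_j\otimes v_j$), equipped with the sesquilinear form
\begin{equation*}
  \Bigl\langle \sum_j G_j\otimes v_j,\ \sum_k \tilde G_k\otimes w_k\Bigr\rangle
    = \sum_{j,k}\bigl\langle \mu(\overline{\tilde G_k}G_j)v_j,\,w_k\bigr\rangle\period
\end{equation*}
This is modeled on the computation already done in the proof of Lemma~\ref{Equiv}. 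Positivity of $\mu$ (applied to the function $\sum_{j,k}\overline{G_j}G_k\xi_j\bar\xi_k\geq 0$ for scalars $\xi$, after a standard matrix-positivity argument) should give that this form is positive~semidefinite. I then quotient by the null space and complete to obtain $\mcH'$.

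Next I define the two actions. For $H\in C(\bGG)$ set $\pi'(H)\bigl(\sum_j G_j\otimes v_j\bigr)=\sum_j (HG_j)\otimes v_j$; for $x\in\Gamma$ set $\pi'(x)\bigl(\sum_j G_j\otimes v_j\bigr)=\sum_j (\lambda(x)G_j)\otimes \pi(x)v_j$. One checks $\pi'(H)$ respects the form and is bounded (using Lemma~\ref{PosIsBdded}, or directly the positivity estimate $0\le \pi'(\bar HH)\le\|H\|_\infty^2\pi'(\one)$ as operators on the pre-Hilbert space), that $\pi'(x)$ is isometric for the form — here the covariance~\eqref{muCondE} is exactly what makes $\langle\mu(\overline{\lambda(x)\tilde G_k}\,\lambda(x)G_j)\pi(x)v_j,\pi(x)w_k\rangle=\langle\mu(\overline{\tilde G_k}G_j)v_j,w_k\rangle$ — and that the covariance relation~\eqref{bdry-rpn} holds, which is immediate from $\lambda(x)(H G)=(\lambda(x)H)(\lambda(x)G)$. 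So $(\pi',\mcH')$ is a $\cp$-representation. Finally, define $\iota:\mcH\to\mcH'$ by $\iota(v)=\one_{\bGG}\otimes v$. Then $\langle\iota v,\iota w\rangle=\langle\mu(\one)v,w\rangle$, so $\iota$ is isometric provided $\mu(\one)=\ID$; and $\iota^*\pi'(G)\iota=\mu(G)$, recovering $\mu$ as the associated mu-map. Cyclicity of $\iota(\mcH)$ for $C(\bGG)$ is clear since $\pi'(G)\iota(v)=G\otimes v$ and such vectors span a dense subspace.

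The one genuine gap is that nothing in Definition~\ref{iota-mu-F} forces $\mu(\one_{\bGG})=\ID$, whereas for $\iota$ to be an \emph{isometric} inclusion — an iota-intertwiner, in the paper's terminology — I need $\iota^*\iota=\ID$, i.e. $\mu(\one)=\ID$. I expect this normalization to be part of the (implicit) standing conventions: a mu-map arising from an iota-intertwiner via~\eqref{mu-defn} automatically has $\mu(\one)=\iota^*\pi'(\one_{\bGG})\iota=\iota^*\iota=\ID$, so the class of mu-maps genuinely in correspondence with iota-intertwiners is those with $\mu(\one)=\ID$, and the proposition should be read with that hypothesis. Granting it, $\iota$ is isometric. (If one drops it but keeps $\mu(\one)$ invertible — automatic from covariance plus irreducibility-free arguments? no — one can rescale; but the clean statement is the normalized one.) I would flag this point explicitly.

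The main obstacle, then, is not any single hard estimate but the bookkeeping: verifying that the form is positive~semidefinite (the matrix-positivity step for positive maps into $\mcB(\mcH)$ is the least routine piece, and uses that a positive map is automatically completely positive when the domain is a commutative $C^*$-algebra, or can be done by hand with a direct $\epsilon$-argument), checking well-definedness of all operators on the quotient, and checking boundedness of $\pi'(H)$. Everything else — the covariance relations, cyclicity, the identity $\iota^*\pi'(G)\iota=\mu(G)$ — falls out of the definitions, essentially as in Lemma~\ref{Equiv}.
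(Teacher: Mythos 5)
Your construction is exactly the paper's: the same semidefinite sesquilinear form on $C(\bGG)\otimes\mcH$ (with positivity coming from complete positivity of a positive map on the commutative algebra $C(\bGG)$ --- the paper invokes Stinespring's theorem for this), the same quotient-completion, the same two actions $\pi'(H)$ and $\pi'(x)$ with the same covariance and boundedness checks, and the same $\iota(v)=\one\otimes v$. The one point you flag as ``the one genuine gap'' is not a gap but a misreading of the terminology: an iota-intertwiner is by definition just a boundary intertwiner, and the paper explicitly \emph{drops} the requirement that such a map be isometric (that extra condition is what distinguishes a boundary \emph{realization}, cf.\ Lemma~\ref{perfect-by-mu}, which records precisely that $\iota$ is a realization if and only if $\mu(\one)=\ID$); so no normalization hypothesis on $\mu(\one)$ is needed and the proposition is correct as stated for arbitrary mu-maps.
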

\begin{proof}
Let $C(\bGG)\otimes\mcH$ be the algebraic tensor product of $C(\bGG)$ and $\mcH$.
For finite sums $X=\sum_j G_j\otimes v_j$, $Y=\sum_j H_j\otimes w_j$  define
\begin{equation}\label{prod-int-mu}
\langle \sum_j G_j\otimes v_j,\sum_j H_j\otimes w_j\rangle=
 \sum_{j,k}\langle\mu(\overline{H_k} G_j)v_j,w_k\rangle
\end{equation}
where on the right-hand side of~\eqref{prod-int-mu} we use the inner
product of~$\mcH$.

By Stinespring's Theorem \cite{Sti} $\mu$ is \emph{completely
  positive} and it follows that \eqref{prod-int-mu} defines a
semidefinite inner product on $C(\bGG)\otimes\mcH$. Let $\mcH'$ be the
quotient-completion of $C(\bGG)\otimes\mcH$ with respect to this inner
product.

Define
\begin{equation}
\iota:\mcH\to C(\bGG)\otimes\mcH\quad \text{by letting}\quad
\iota(v)=\one\otimes v
  \period
\end{equation}
Let $C(\bGG)$ act on $\sum_j G_j\otimes v_j$ by
\begin{equation}
\pi'(G)\sum_j G_j\otimes v_j=\sum_j G G_j\otimes v_j
 \end{equation}
and check that 
\begin{align*}
&\pi'(GH) =\pi'(G)\pi'(H)\\
&\pi'(G+H)=\pi'(G)+\pi'(H)\\
  &\langle\pi'(G) X,Y\rangle=\langle X,\pi'(\overline{G})Y\rangle
    \period
\end{align*}
In particular, if $G$ is positive one has
\begin{equation}
\pi'(G)=\pi'(\sqrt{G}\sqrt{G})=\pi'(\sqrt{G})(\pi'(\sqrt{G}))^*
\end{equation}
and hence $\pi'(G)$ is a positive operator. 
In order to extend $\pi'(G)$ to all $\mcH'$ we need to show that
$|\langle\pi'(G)X,Y\rangle|\leq C\|G\|_\infty\cdot \|X\|\cdot \|Y\|$
for some constant $C$.
Assume first that $G$ is positive. Since
$0\leq G\leq\norm{G}_\infty\one$, one has
\begin{equation*}
\norm{\pi'(G)}=\sup_{\|X\|\leq1}
\langle\pi'(G)X,X\rangle
\leq\sup_{\|X\|\leq1}
\langle \pi'(\norm{G}_\infty\one) X,X\rangle=\norm{G}_\infty
\comma
\end{equation*}
 and hence $\norm{\pi'(G)}\leq\norm{G}_\infty$.
When $G$~is not positive, divide it up into its positive and negative
real and imaginary parts.

Make $\GG$ act on $C(\bGG)\otimes\mcH$ by 
\begin{equation}
  \pi'(x)\sum_j G_j\otimes v_j=\sum_j \lambda(x)G_j\otimes\pi(x)v_j
    \period
\end{equation}
It is obvious that $\pi'$ defines a group action. It is easy to check
that $\pi'(x)\iota=\iota\pi(x)$.  To see that $\pi'(x)$ is unitary
compute
\begin{align*}
\langle \pi'(x)\sum_j G_j\otimes v_j&,\sum_k H_k\otimes w_k\rangle
=\sum_{j,k}\langle\mu((\lambda(x)G_j)\overline{H_k})\pi(x)v_j,w_k\rangle
  \\
&=\sum_{j,k}\langle\mu(G_j(\lambda(x^{-1})\overline{H_k}))v_j,
  \pi(x)^{-1}w_k\rangle \\
&=\langle \sum_j G_j\otimes v_j, 
  \sum_k\lambda(x^{-1})H_k\otimes\pi(x)^{-1} w_k\rangle \\
&=\langle \sum_j G_j\otimes v_j,\pi'(x)^{-1}\sum_k H_k\otimes
  w_k\rangle
    \period
\end{align*}
Since $\pi'(x)$~is bounded on $C(\bGG)\otimes\mcH$, it extends by
continuity to~$\mcH'$. 

To see that $\pi':\Gamma\to\mcU(\mcH')$ and
$\pi':C(\bGG)\to\mcB(\mcH)$ satisfy~\eqref{bdry-rpn} and give a
$\cp$-representation, compute
\begin{multline*}
\pi'(x)\pi'(G)\pi'(x^{-1}) \sum_j G_j\otimes v_j
=\pi'(x)\sum_j G(\lambda(x^{-1})G_j)\otimes\pi(x)^{-1}v_j\\
=\sum_j(\lambda(x)G) G_j\otimes v_j=\pi'(\lambda(x)G)\sum_j
  G_j\otimes v_j
    \period
\end{multline*}
This completes the construction of the iota-intertwiner. Now we check
that the associated mu-map is the one we wanted:
\begin{equation*}
  \langle \iota^*\pi'(G)\iota v_1,v_2 \rangle
  =\langle \pi'(G)\iota v_1,\iota v_2 \rangle
  =\langle G\otimes v_1, \one\otimes v_2 \rangle
  =\langle \mu(G)v_1,v_2 \rangle \period
\end{equation*}
\end{proof}
Proposition~\ref{mu-gives-iota} is a special case of a considerably
more general fact: see Lemma~3.1 of~\cite{RSW}.

To any given mu-map $\mu: C(\bGG)\to\mcB(\mcH)$, associate an
Eff-vector $F(\mu)=F=(F_a)_{a\in A}$ by letting
\begin{equation}\label{Eff-vector-con-mu}
F_a=\mu(\one_a) \period
\end{equation}
To check that $F$~is an Eff-vector, observe that each~$F_a$ is
positive semi-definite and check that $\mcT F=F$:
\begin{align*}
  (\mcT F)_a &=\sum_{b\in A\st ab\neq 1}
 \pi(a)F_b\pi(a)^{-1}
 =\sum_{b\in A\st ab\neq 1}\mu(\lambda(a)\one_b)\\
 & =\sum_{b\in A\st ab\neq 1}\mu(\one_{ab})= \mu(\one_a)= F_a
 \period
\end{align*}

For mu-maps~$\mu_1$ and $\mu_2$, we say $\mu_1\leq\mu_2$ if
$\mu_1(G)\leq\mu_2(G)$ as operators for every $G\geq 0$. For
Eff-vectors~$F_1$ and~$F_2$, we say that $F_1\leq F_2$ if $(F_1)_a\leq
(F_2)_a$ as operators for each $a\in A$. Indeed we will use this
notation whenever~$F_1$ and~$F_2$ are $|A|$-tuples of operators.
\begin{lemma}\label{Eff-leq-mu}
Assume that $\mu_i:C(\bGG)\to\mcB(\mcH)$ ($i=1,2$) are mu-maps for $\pi$ and let
$F_i=F(\mu_i)$. Then $F_1=F_2$ if and only if $\mu_1=\mu_2$.
Furthermore $F_1\leq F_2$ if and only if $\mu_1\leq\mu_2$.
\end{lemma}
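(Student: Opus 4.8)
The plan is to reduce both equivalences to the basic characteristic functions $\one_x$ ($x\in\Gamma$) and then transfer the information back to all of $C(\bGG)$ using density and the boundedness of positive maps (Lemma~\ref{PosIsBdded}).

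First I would dispose of the $(\Leftarrow)$ directions, which are immediate from $F_a=\mu(\one_a)$: $\mu_1=\mu_2$ forces $F_1=F_2$, and $\mu_1\le\mu_2$ forces $(F_1)_a=\mu_1(\one_a)\le\mu_2(\one_a)=(F_2)_a$. For the $(\Rightarrow)$ directions the crucial point is that the covariance relation~\eqref{muCondE} already pins down $\mu$ on every $\one_x$ in terms of $F$ alone. Indeed, writing a reduced word $x\neq e$ as $x=ya$ with $a\in A$ its last letter (so $ya$ is reduced and $y=xa^{-1}$), one has $\one_x=\lambda(y)\one_a$, whence $\mu(\one_x)=\mu(\lambda(y)\one_a)=\pi(y)F_a\pi(y)^{-1}$; and for $x=e$ one has $\mu(\one_e)=\mu(\one)=\mu(\sum_{a\in A}\one_a)=\sum_{a\in A}F_a$. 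Since conjugation by a unitary and finite sums of self-adjoint operators preserve the operator order, $F_1=F_2$ (resp.\ $F_1\le F_2$) implies $\mu_1(\one_x)=\mu_2(\one_x)$ (resp.\ $\mu_1(\one_x)\le\mu_2(\one_x)$) for every $x\in\Gamma$.

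Next I would observe that the linear span $\mathcal{D}$ of $\{\one_x:x\in\Gamma\}$ is dense in $C(\bGG)$: since $\bGG(x)\cap\bGG(y)$ is either empty or equal to one of $\bGG(x),\bGG(y)$, $\mathcal{D}$ is closed under products; it is closed under complex conjugation (all $\one_x$ are real), it contains the constants because $\one=\sum_{a\in A}\one_a$, and it separates the points of $\bGG$ because two distinct infinite reduced words have distinct prefixes of some finite length. So Stone--Weierstrass applies. As $\mu_1$ and $\mu_2$ are bounded by Lemma~\ref{PosIsBdded} and agree on $\mathcal{D}$ whenever $F_1=F_2$, they agree on all of $C(\bGG)$; this settles the first assertion.

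For the inequality, suppose $F_1\le F_2$ and take $G\in C(\bGG)$ with $G\ge 0$; I must show $\mu_1(G)\le\mu_2(G)$. I would approximate $G$ from below by elements of $\mathcal{D}$ that are \emph{non-negative} combinations of the $\one_x$: for each $n\ge 1$ the sets $\bGG(x)$ with $|x|=n$ form a finite clopen partition of $\bGG$, these partitions refine to generate the topology, so by uniform continuity of $G$, for $n$ large its oscillation on each such $\bGG(x)$ is $<\epsilon$, and $G_n:=\sum_{|x|=n}\bigl(\min_{\bGG(x)}G\bigr)\one_x$ satisfies $0\le G_n\le G$ and $\norm{G-G_n}_\infty<\epsilon$. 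Adding the inequalities $\mu_1(\one_x)\le\mu_2(\one_x)$ with the non-negative weights $\min_{\bGG(x)}G$ gives $\mu_1(G_n)\le\mu_2(G_n)$, and then $\norm{\mu_i(G-G_n)}\le C\norm{G-G_n}_\infty<C\epsilon$ (Lemma~\ref{PosIsBdded}) lets me pass to the limit $\epsilon\to 0$, yielding $\mu_1(G)\le\mu_2(G)$. This lemma is essentially routine; the only points needing a little care are making sure the approximants $G_n$ really are honest non-negative combinations of the $\one_x$ (not just arbitrary linear combinations) and keeping track of the unitary conjugations in the covariance step.
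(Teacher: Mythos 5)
Your proposal is correct and follows essentially the same route as the paper: both use the covariance relation to pin down $\mu(\one_x)$ in terms of $F$ by unitary conjugation, then transfer the equality (resp.\ the operator inequality) from the functions $\one_x$ to all of $C(\bGG)$ via density, linearity with non-negative coefficients, and the boundedness of positive maps. You merely spell out more explicitly (Stone--Weierstrass, the level-$n$ clopen partitions and the approximants $G_n$ from below) what the paper compresses into ``by linearity\dots by continuity'' and the remark that positive functions are uniform limits of positive combinations of the $\one_y$.
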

\begin{proof}
Assume that $F_1=F_2$. Take any $a\in A$ and let $x\in\GG$ be such that
$|xa|=|x|+1$.
One deduces successively:
\begin{align*}
              \mu_1(\one_a) &=\mu_2(\one_a) \comma\\
\pi(x)\mu_1(\one_a)\pi(x)^{-1}&=\pi(x)\mu_2(\one_a)\pi(x)^{-1} \comma\\
\mu_1(\lambda(x)\one_a) &=\mu_2(\lambda(x)\one_a) \comma\\
\mu_1(\one_{xa})&=\mu_2(\one_{xa})  \period
\end{align*}
Hence $\mu_1(\one_y)=\mu_2(\one_y)$ for all $y\in\GG$. By linearity,
$\mu_1$ and~$\mu_2$ agree on all locally constant functions. By
continuity, they agree everywhere.  If we replace the equalities in
the above calculation with inequalities, we see that $F_1\leq F_2$
implies $\mu_1(\one_y)\leq\mu_2(\one_y)$ for all $y\in\GG$. Since every
positive function of $C(\bGG)$ can be uniformly approximated by {\it
  positive} linear combinations of $\one_y$ ($y\in\GG$) we may conclude
that $\mu_1\leq\mu_2$.
\end{proof}

\begin{proposition}\label{Eff-gives-mu}
  Let~$F$ be any Eff-vector for~$\pi$. Then there exists
  a unique mu-map $\mu$ for which $F$~is the associated Eff-vector.
\end{proposition}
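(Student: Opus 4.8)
The plan is to get uniqueness for free and then build $\mu$ by hand, starting from the cylinder functions. Uniqueness is immediate from Lemma~\ref{Eff-leq-mu}: if $\mu_1$ and $\mu_2$ are mu-maps with $F(\mu_1)=F=F(\mu_2)$, then $\mu_1=\mu_2$. For existence, I start from the observation that the covariance condition~\eqref{muCondE} leaves no choice about the values of $\mu$ on cylinder sets. If $y=a_1\cdots a_n$ is reduced with $n\ge 1$, then $\one_y=\lambda(a_1\cdots a_{n-1})\one_{a_n}$, so one is forced to set $\mu(\one_y)=\pi(a_1\cdots a_{n-1})\,F_{a_n}\,\pi(a_1\cdots a_{n-1})^{-1}$, and also $\mu(\one)=\sum_{a\in A}F_a$. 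I take these formulas as the \emph{definition} of $\mu$ on the cylinder functions and on $\one$; each operator so defined is positive semidefinite, being a unitary conjugate of some $F_a$.

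The crux is compatibility with refinement. Since $\bGG(y)$ is the disjoint union of the cylinders $\bGG(yb)$ over those $b\in A$ with $yb$ reduced (equivalently $a_nb\ne 1$), well-definedness of a linear extension hinges on the identity $\mu(\one_y)=\sum_{b\st yb\text{ reduced}}\mu(\one_{yb})$. Conjugating by $\pi(a_1\cdots a_{n-1})$ turns this into $F_{a_n}=\sum_{b\st a_nb\ne 1}\pi(a_n)F_b\pi(a_n)^{-1}=(\mcT F)_{a_n}$, which holds \emph{precisely because} $F$ is an Eff-vector; the case $y=e$ reduces to the trivial $\sum_aF_a=\sum_a(\mcT F)_a$. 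Iterating, $\mu(\one_y)=\sum_{|z|=N,\,z\succ y}\mu(\one_z)$ for every $N\ge|y|$ (here $z\succ y$ means the reduced word $z$ starts with $y$). Now $G\in C(\bGG)$ is locally constant exactly when, for some $N$, it is constant on every cylinder $\bGG(z)$ with $|z|=N$, and for fixed $N$ the functions $\{\one_z\st|z|=N\}$ are linearly independent (disjoint nonempty supports). So I may define $\mu(G)=\sum_{|z|=N}G|_{\bGG(z)}\,\mu(\one_z)$; the refinement identity shows this is independent of $N$, so $\mu$ is a well-defined linear map on the locally constant functions, extending the cylinder assignment. If $G\ge 0$, all coefficients $G|_{\bGG(z)}$ are nonnegative and each $\mu(\one_z)$ is positive semidefinite, so $\mu(G)\ge 0$; then the argument of Lemma~\ref{PosIsBdded} gives $\norm{\mu(G)}\le C\norm{G}_\infty$ on locally constant functions.

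The rest is soft analysis. Locally constant functions are uniformly dense in $C(\bGG)$, so $\mu$ extends uniquely to a bounded linear map $C(\bGG)\to\mcB(\mcH)$; positivity persists because a positive continuous function is a uniform limit of positive locally constant ones and the positive cone of $\mcB(\mcH)$ is norm-closed. For covariance~\eqref{muCondE}, both sides are continuous and norm-bounded in $G$, so it suffices to verify $\pi(x)\mu(\one_u)\pi(x)^{-1}=\mu(\lambda(x)\one_u)$ on cylinder functions $\one_u$ with $|u|>|x|$, since these span the locally constant functions. For such $u$ one has $\lambda(x)\one_u=\one_{xu}$, where $xu$ denotes the reduced word, which is nonempty because $|u|>|x|$; and a direct computation from the definition of $\mu$ on cylinders, together with $\pi$ being a homomorphism and the fact that the length-$(|xu|-1)$ prefix of the reduced word $xu$ is the reduced word for $x\,(u_1\cdots u_{|u|-1})$ (a short case check, covering the degenerate case where this word is $e$), yields $\mu(\one_{xu})=\pi(x)\,\mu(\one_u)\,\pi(x)^{-1}$. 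This gives covariance on the locally constant functions and hence everywhere. Finally $\mu(\one_a)=F_a$ by construction, so $F$ is the Eff-vector associated to $\mu$.

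The one genuinely delicate point is the bookkeeping: confirming that the only input needed for well-definedness of the linear extension is the relation $\mcT F=F$ — this is the conceptual heart, since $\mcT F=F$ is exactly the statement that the forced cylinder values are consistent under refinement — and, in the covariance step, tracking the cancellations in the product $xu$. Everything else is routine manipulation, and, unlike Proposition~\ref{mu-gives-iota}, no appeal to complete positivity is required.
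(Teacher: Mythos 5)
Your proposal is correct and follows essentially the same route as the paper: uniqueness via Lemma~\ref{Eff-leq-mu}, forced definition of $\mu$ on cylinder functions by covariance, consistency under refinement reduced (after conjugation) exactly to $\mcT F=F$, positivity from $F_a\geq 0$, extension by continuity via the bound of Lemma~\ref{PosIsBdded}, and covariance checked on cylinders $\one_u$ with $|u|>|x|$. The only difference is cosmetic bookkeeping (your per-cylinder refinement identity versus the paper's comparison of the level-$n$ and level-$(n+1)$ formulas), and you are somewhat more explicit about the degenerate cancellation cases in the covariance step.
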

\begin{proof}
  Uniqueness is the first part of Lemma~\ref{Eff-leq-mu}
  above. Given~$F$, we must
  construct~$\mu$. By~\eqref{Eff-vector-con-mu} we must have
  $\mu(\one_a)=F_a$, and so by~\eqref{muCondE}
  \begin{equation}\label{Eff-gives-mu-eq}
    \mu(\one_{xa})=\mu(\lambda(x)\one_a)=\pi(x)F_a\pi(x)^{-1}
    \qquad\text{when $|xa|=|x|+1$.}
  \end{equation}
  Working with~\eqref{Eff-gives-mu-eq} we start by defining~$\mu$ on
  functions~$G(\omega)$ which depend only on the first $n$~letters
  of~$\omega$.
  \begin{equation*}
    \mu(G)=\mu\bigl(\sum_{xa \st |xa|=|x|+1=n} G_{xa}\one_{xa}\bigr)
      =\sum_{xa \st |xa|=|x|+1=n} G_{xa}\pi(x)F_a\pi(x)^{-1} \period
  \end{equation*}
  This same function $G(\omega)$ might also be considered as depending
  on the first $n+1$~letters of~$\omega$. This way of looking at~$G$
  gives a different formula for $\mu(G)$:
  \begin{equation*}
    \mu(G)=\sum_{xab \st |xab|=|x|+2=n+1} G_{xa}\pi(xa)F_b\pi(xa)^{-1}
  \end{equation*}
  and our definition is consistent only if the two answers agree.
  They do agree due to the condition $F=\mcT F$ with~$\mcT$ as in
  equation~\eqref{FDefE}.

  Now~$\mu(G)$ is defined for all functions~$G(\omega)$ which depend
  on only finitely many letters of~$\omega$. Let $C^\infty(\bGG)$
  denote the subalgebra of all such functions.  $\mu$ is a positive
  map on~$C^\infty(\bGG)$ because $F_a\geq 0$ for all~$a\in A$. To
  extend~$\mu$ to all of $C(\bGG)$ by continuity we need the
  inequality $\norm{\mu(G)}\leq C\norm{G}_\infty$ which follows as
  in the proof of Lemma~\ref{PosIsBdded}.


  Now we check for covariance:
  $\pi(x)\mu(G)\pi(x)^{-1}=\mu(\lambda(x)G)$.  By continuity, it is
  enough to check this when $G\in C^\infty(\bGG)$.  By linearity, it
  is enough to consider $G=\one_{ya}$ where $|ya|=|y|+1>|x|$. Then
  \begin{align*}
    \pi(x)\mu(\one_{ya})\pi(x)^{-1}
    &=\pi(x)\pi(y)F_a\pi(y)^{-1}\pi(x)^{-1} \\
    &=\pi(xy)F_a\pi(xy)^{-1} 
    =\mu(\one_{xya})
    =\mu(\lambda(x)\one_{ya}) \period
  \end{align*}
\end{proof}

\subsection{Perfect realizations seen in terms of~$\iota$, $\mu$,
  and~$F$}

\begin{lemma}\label{perfect-by-mu}
Suppose that $\iota$ is an iota-intertwiner for $\pi$ with associated
mu-map $\mu$. Then
\begin{itemize}
\item $\iota$ is a boundary realization if and only of $\mu(\one)=\ID$.
\item $\iota$ is a perfect boundary realization if and only if $\mu$
is an algebra homomorphism.
\end{itemize}
\end{lemma}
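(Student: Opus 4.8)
The plan is to prove each bullet point by unwinding the definitions in terms of the mu-map, using the translation machinery from Section~\ref{imfS} together with the spectral theorem for $C(\bGG)$.

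For the first bullet: recall that $\iota:\mcH\to\mcH'$ is a boundary \emph{realization} precisely when it is an \emph{isometric} inclusion (not merely a $\Gamma$-map), the cyclicity condition being automatic for an iota-intertwiner. So I must show $\iota$ is isometric if and only if $\mu(\one)=\ID$. This is immediate from the definition $\mu(G)=\iota^*\pi'(G)\iota$: taking $G=\one=\one_\bGG$ and using that a $\cp$-representation sends $\one_\bGG$ to the identity operator on $\mcH'$ gives $\mu(\one)=\iota^*\iota$. Now $\iota^*\iota=\ID$ on $\mcH$ is exactly the statement that $\iota$ is isometric (for a bounded operator between Hilbert spaces, $\iota^*\iota=\ID$ iff $\|\iota v\|=\|v\|$ for all $v$). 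So the first bullet is a one-line verification once the definitions are in place.

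For the second bullet, suppose first that $\iota$ is a perfect boundary realization, i.e. $\iota$ is unitary. Then $\pi'(G)=\iota\mu(G)\iota^*$ after identifying $\mcH$ with $\mcH'$ via $\iota$, and since $G\mapsto\pi'(G)$ is an algebra homomorphism (it is a $*$-representation of $C(\bGG)$), conjugating by the unitary $\iota$ shows $\mu$ is an algebra homomorphism; one also notes $\mu(\one)=\iota^*\iota=\ID$, consistent with the first bullet. Conversely, suppose $\mu$ is an algebra homomorphism. By Proposition~\ref{mu-gives-iota} the iota-intertwiner attached to $\mu$ is, up to equivalence, the one built on $C(\bGG)\otimes\mcH$; and by Lemma~\ref{Equiv} it suffices to check that \emph{that} model intertwiner is perfect. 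In that model $\iota(v)=\one\otimes v$, and the norm of a general element $\sum_j G_j\otimes v_j$ is computed by $\sum_{j,k}\langle\mu(\overline{G_k}G_j)v_j,v_k\rangle$. Using that $\mu$ is multiplicative, $\overline{G_k}$-linearity, and that $\mu(\one)=\ID$ (which holds because $\mu$ being a homomorphism forces $\mu(\one)$ to be an idempotent, and in fact the identity since $\mu(\one)\mu(G)=\mu(G)$ for all $G$ and the $\mu(G)$ generate enough of $\mcB(\mcH)$ — more carefully, $\mu(\one)$ acts as identity on the range of every $\mu(G)$, and cyclicity of $\iota(\mcH)$ plus $\mu(\one)\ge 0$ pins it down; alternatively just add the hypothesis that an algebra homomorphism here is understood to be unital, as is standard for $C^*$-algebra representations), one gets that $\one\otimes v_1$ is identified in $\mcH'$ with a vector depending only on $\mu$ evaluated on products, so that $G\otimes v$ and $\one\otimes(\mu(G)v)$ represent the same class: indeed $\|G\otimes v-\one\otimes\mu(G)v\|^2$ expands, via multiplicativity of $\mu$, to $\langle\mu(\overline GG)v,v\rangle-\langle\mu(\overline G)\mu(G)v,v\rangle-\langle\mu(G)v,\mu(G)v\rangle+\langle\mu(G)v,\mu(G)v\rangle=0$. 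Hence every element of the dense subspace $C(\bGG)\otimes\mcH$ lies in the closure of $\iota(\mcH)=\one\otimes\mcH$, so $\iota$ is surjective, i.e. $\mcH'=\overline{\iota(\mcH)}$ and $\iota$ is unitary (it is already isometric by the first bullet since $\mu(\one)=\ID$). This is a perfect boundary realization.

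The main obstacle is the bookkeeping in the converse direction of the second bullet: one must be careful that ``algebra homomorphism'' is taken to include unitality (equivalently, one must argue $\mu(\one)=\ID$ separately, which is where the cyclicity of $\iota(\mcH)$ and positivity of $\mu$ enter), and then the surjectivity computation $\|G\otimes v-\one\otimes\mu(G)v\|=0$ must be carried out honestly using multiplicativity on all four cross terms. Everything else is routine substitution into the definitions $\mu(G)=\iota^*\pi'(G)\iota$ and the explicit tensor-product model of Proposition~\ref{mu-gives-iota}.
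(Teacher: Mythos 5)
Your proof is correct, and for the first bullet and the forward half of the second it coincides with the paper's argument: $\mu(\one)=\iota^*\pi'(\one)\iota=\iota^*\iota$, and $\mu(G_1G_2)=\iota^*\pi'(G_1)\,\iota\iota^*\,\pi'(G_2)\iota=\mu(G_1)\mu(G_2)$ once $\iota\iota^*=\ID$. Where you genuinely diverge is the converse of the second bullet. The paper makes $\mcH$ itself into a $\cp$-representation space by declaring $\tilde\pi'(G)=\mu(G)$, notes that $\tilde\iota=\ID$ is then a perfect realization with the same mu-map as $\iota$, and concludes by Lemma~\ref{Equiv}. You instead pass to the tensor-product model of Proposition~\ref{mu-gives-iota} and show, via the norm computation $\norm{G\otimes v-\one\otimes\mu(G)v}^2=0$ (which is correct: the cross terms and the diagonal terms cancel using multiplicativity, $\mu(\bar G)=\mu(G)^*$, and $\mu(\one)=\ID$), that every elementary tensor already lies in the closed subspace $\iota(\mcH)=\one\otimes\mcH$, whence $\iota$ is onto and hence unitary. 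Both routes ultimately rest on Lemma~\ref{Equiv}; the paper's is shorter because it never has to exhibit surjectivity explicitly, while yours makes visible the mechanism by which multiplicativity collapses $C(\bGG)\otimes\mcH$ onto $\one\otimes\mcH$.

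One caution on your parenthetical treatment of unitality. Your suggested argument that cyclicity of $\iota(\mcH)$ ``pins down'' $\mu(\one)=\ID$ does not work: for a homomorphism $\mu$ the operator $\mu(\one)=\iota^*\iota$ is a projection, and the closed span of the ranges of the $\mu(G)$ is only $(\ker\iota)^\perp$, so cyclicity in $\mcH'$ gives nothing on $\mcH$. Indeed, if $\mcH=\mcH_0\oplus\mcH_1$ with both summands $\Gamma$-invariant and $\iota_0:\mcH_0\to\mcH'$ is a perfect realization of $\pi|_{\mcH_0}$, then $\iota=\iota_0 P_0$ is an iota-intertwiner whose mu-map is the (non-unital) homomorphism $\mu_0\oplus 0$, yet $\iota$ is not a perfect realization. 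So the second bullet must be read with ``algebra homomorphism'' meaning unital homomorphism, equivalently in conjunction with the first bullet --- exactly as Lemma~\ref{F-perfect} does with its ``in addition''. Your fallback convention is therefore the right fix; note that the paper's own proof carries the same implicit assumption, since its $\tilde\iota=\ID$ has cyclic image only when $\mu(\one)=\ID$.
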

\begin{proof}
Observe that $\mu(\one)=\iota^*\pi'(\one)\iota=\iota^*\iota$.  Hence
$\iota$ is an isometry if and only if $\mu(\one)=\ID$.  Consider the
second statement. If $\iota$ is perfect one has
$\iota^*\iota=\iota\iota^*=\ID$. Consequently
$\mu(G_1G_2)=\iota^*\pi'(G_1G_2)\iota=
\iota^*\pi'(G_1)\iota\iota^*\pi'(G_2)\iota=\mu(G_1)\mu(G_2)$.
The converse is more delicate.  When $\mu$ is an algebra homomorphism
we can make $\mcH$ itself into a $\cp$-representation space by
defining $\tilde\pi'(x)=\pi(x)$ and $\tilde\pi'(G)=\mu(G)$ for
$x\in\Gamma$ and $G\in C(\bGG)$. Letting $\tilde\mcH'=\mcH$ and
$\tilde\iota=\ID$, we get a perfect boundary realization
of~$\mcH$. The mu-map corresponding to this realization is
$G\mapsto \tilde\iota^*\tilde\pi'(G)\tilde\iota=\mu(G)$. Since~$\iota$
and $\tilde\iota$ give the same mu-map, Lemma~\ref{Equiv} says they
are equivalent, so $\iota$~is also a perfect realization.
\end{proof}

\begin{lemma}\label{F-perfect}
Suppose that $\iota$ is an iota-intertwiner for $\pi$ with associated
Eff-vector $(F_a)$. Then
\begin{itemize}
\item $\iota$ is a boundary realization if and only of $\sum_{a\in A}F_a=\ID$.
\item $\iota$ is a perfect boundary realization if and only if in addition
$F_a^2=F_a$ for each $a\in A$.
\end{itemize}
\end{lemma}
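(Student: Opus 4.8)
The plan is to route both claims through Lemma~\ref{perfect-by-mu}, using the relation $F_a=\mu(\one_a)$ from~\eqref{Eff-vector-con-mu}. The cylinders $\bGG(a)$, $a\in A$, partition $\bGG$, so $\sum_{a\in A}\one_a=\one$; applying~$\mu$ gives $\sum_{a\in A}F_a=\mu(\one)$. Since Lemma~\ref{perfect-by-mu} characterizes boundary realizations by the condition $\mu(\one)=\ID$, the first bullet follows at once. For the forward direction of the second bullet, if $\iota$ is a perfect boundary realization then $\sum_{a\in A}F_a=\ID$ by the first bullet, and $\mu$ is an algebra homomorphism by Lemma~\ref{perfect-by-mu}, so $F_a^2=\mu(\one_a)^2=\mu(\one_a^2)=\mu(\one_a)=F_a$ for every~$a$, because $\one_a^2=\one_a$.

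The real content is the converse. Assume $\sum_{a\in A}F_a=\ID$ and $F_a^2=F_a$ for each~$a$; I must show $\mu$ is an algebra homomorphism and then quote Lemma~\ref{perfect-by-mu}. First, $\mu(\one)=\ID$, so $\mu$ is unital. The covariance relation~\eqref{muCondE} then spreads idempotency over the tree: for $y\in\GG$ with $|y|\ge 1$, write $y=xa$ with $a\in A$ and $|xa|=|x|+1$, so that $\mu(\one_y)=\pi(x)F_a\pi(x)^{-1}$ is self-adjoint and idempotent, hence an orthogonal projection (the case $|y|=0$ gives $\mu(\one_y)=\mu(\one)=\ID$). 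Now recall that $\mu$, being a positive map on the commutative unital $C^*$-algebra $C(\bGG)$, is completely positive (Stinespring, as in the proof of Proposition~\ref{mu-gives-iota}); fix a Stinespring representation $\mu(G)=V^*\pi''(G)V$ with $\pi''$ a unital $*$-representation and $V$ an isometry (isometric because $\mu(\one)=\ID$). The identity $\mu(\one_y)=\mu(\one_y^2)=\mu(\one_y)^2$ rearranges to $V^*\pi''(\one_y)(\ID-VV^*)\pi''(\one_y)V=0$; since $\ID-VV^*$ is a projection this forces $(\ID-VV^*)\pi''(\one_y)V=0$, i.e. $\pi''(\one_y)V=V\mu(\one_y)$, and hence also $V^*\pi''(\one_y)=\mu(\one_y)V^*$. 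Therefore, for every $G\in C(\bGG)$,
\begin{equation*}
  \mu(\one_y G)=V^*\pi''(\one_y)\pi''(G)V=\mu(\one_y)\,V^*\pi''(G)V=\mu(\one_y)\mu(G)\period
\end{equation*}
By linearity this extends to $\mu(HG)=\mu(H)\mu(G)$ for $H$ in the linear span of the $\one_y$, i.e. for $H\in C^\infty(\bGG)$, and then by continuity (Lemma~\ref{PosIsBdded} bounds~$\mu$, and $C^\infty(\bGG)$ is dense in $C(\bGG)$) to all $H,G\in C(\bGG)$. Thus $\mu$ is an algebra homomorphism and Lemma~\ref{perfect-by-mu} concludes that $\iota$ is a perfect boundary realization.

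The only genuine obstacle is this converse --- specifically, upgrading idempotency of the finitely many operators $F_a$ to multiplicativity of~$\mu$ across all of $C(\bGG)$. The Stinespring computation is just the familiar fact that a self-adjoint idempotent on which a unital completely positive map acts multiplicatively lies in the multiplicative domain of that map, so that $\mu$ is automatically multiplicative against everything it meets; one could instead cite this fact directly. The remaining ingredients --- the partition $\sum_{a\in A}\one_a=\one$, the spread of idempotency via covariance, and the density argument --- are all routine.
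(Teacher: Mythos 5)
Your proof is correct, and the first bullet and the forward direction of the second bullet coincide with the paper's argument. For the converse of the second bullet, however, you take a genuinely different route. The paper stays elementary: it notes that each $\mu(\one_{xa})=\pi(x)F_a\pi(x)^{-1}$ is an orthogonal projection, that for each fixed~$n$ these projections sum to $\ID=\mu(\one)$, and then invokes Lemma~\ref{proj-sum} (a sum of self-adjoint projections equal to the identity is mutually orthogonal) to conclude that $\mu$ is multiplicative on each level-$n$ subalgebra, hence on $C^\infty(\bGG)$, hence on $C(\bGG)$ by continuity. You instead observe that the single identity $\mu(\one_y^2)=\mu(\one_y)=\mu(\one_y)^2$ places $\one_y$ in the multiplicative domain of the unital completely positive map~$\mu$, extracted via a Stinespring dilation, so that $\mu(\one_y G)=\mu(\one_y)\mu(G)$ against \emph{every} $G\in C(\bGG)$ at once; linearity and density then finish. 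Your argument is slicker and needs no information about how the cylinders at a given level fit together --- only idempotency of each $\mu(\one_y)$ separately --- at the cost of invoking complete positivity and the dilation (which, incidentally, you could obtain concretely from $\mu(G)=\iota^*\pi'(G)\iota$ rather than abstractly). The paper's version buys self-containedness: it uses only the elementary Lemma~\ref{proj-sum} and no operator-algebraic machinery beyond what Section~\ref{imfS} has already set up.
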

\noindent We need this well-known elementary lemma:
\begin{lemma}\label{proj-sum}
Suppose that $(P_j)_{1\leq j\leq n}$ are self-adjoint projections such that
$\sum_{j=1}^n P_j=\ID$. Then the $P_j$ are mutually orthogonal. 
\end{lemma}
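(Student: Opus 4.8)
The plan is to show each $P_j$ acts as the identity on the range of any other $P_k$, $k\neq j$, which forces the ranges to be pairwise orthogonal. The quickest route is to compare traces, but since the Hilbert space need not be finite-dimensional, I would instead argue vector-by-vector using positivity. For a fixed unit vector $v\in\mathrm{ran}(P_k)$ we have $\langle P_k v,v\rangle=1$, and since each $P_j$ is a positive operator with $0\leq P_j\leq\ID$, each term $\langle P_j v,v\rangle$ lies in $[0,1]$. The hypothesis $\sum_j P_j=\ID$ gives $\sum_j\langle P_j v,v\rangle=\langle v,v\rangle=1$. Since one of these summands (the $j=k$ term) already equals $1$ and all are non-negative, every other summand $\langle P_j v,v\rangle$ must vanish.

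Next I would upgrade this from ``$\langle P_j v,v\rangle=0$'' to ``$P_j v=0$''. This is the standard fact that a positive semidefinite operator $P_j$ with $\langle P_j v,v\rangle=0$ annihilates $v$: writing $P_j=P_j^2=P_j^*P_j$ (using that $P_j$ is a self-adjoint projection), one gets $\norm{P_j v}^2=\langle P_j^*P_j v,v\rangle=\langle P_j v,v\rangle=0$, so $P_j v=0$. Hence for every $v$ in the range of $P_k$ and every $j\neq k$ we have $P_j v=0$, i.e. $P_j P_k=0$.

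Finally, $P_j P_k=0$ for $j\neq k$ is exactly the statement that the projections are mutually orthogonal: their ranges satisfy $\mathrm{ran}(P_k)\subseteq\ker(P_j)=\mathrm{ran}(P_j)^\perp$ (the last equality again because $P_j$ is a self-adjoint projection). There is no real obstacle here; the only point that needs a word of care is that the argument works in arbitrary (separable) Hilbert space without any trace or dimension assumption, which is why I route through the pointwise positivity estimate rather than through $\tr(\sum P_j)=\sum\tr(P_j)$. One could alternatively note that $P_k=P_k\bigl(\sum_j P_j\bigr)P_k=\sum_j P_kP_jP_k=P_k+\sum_{j\neq k}(P_jP_k)^*(P_jP_k)$, so $\sum_{j\neq k}\norm{P_jP_k v}^2=0$ for all $v$, giving $P_jP_k=0$ directly; I would probably present this slicker one-line version in the final writeup.
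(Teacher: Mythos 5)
Your proposal is correct and follows essentially the same route as the paper: fix $v$ in the range of $P_k$, expand $\langle v,v\rangle=\sum_j\langle P_jP_kv,v\rangle$, use positivity of each summand to conclude the $j\neq k$ terms vanish, and upgrade $\langle P_jv,v\rangle=0$ to $P_jP_k=0$ via $\|P_jv\|^2=\langle P_jv,v\rangle$. The ``slicker one-line version'' you mention at the end is in fact exactly the computation the paper presents.
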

\begin{proof}
Assume that $v=P_kv$. One has
\begin{align*}
\langle v,v\rangle &=\langle \sum_{j=1}^n P_j P_kv,v\rangle=
\langle P_k^2v,v\rangle +\sum_{j\neq k}\langle P_jP_kv,v\rangle\\
  &=\langle v,v\rangle +\sum_{j\neq k}\langle P_jP_kv,v\rangle
  \period
\end{align*}
Since $\langle P_jP_kv,v\rangle=\langle P_jP_kv,P_kv\rangle\geq0$ all
such summands in the above sum must be zero. Hence
$P_kP_jP_k=P_kP_j(P_kP_j)^*=0$ for $j\neq k$.
\end{proof}
\begin{proof}[Proof of Lemma~\ref{F-perfect}]
$\iota$ is a boundary realization if and only if $\mu(\one)=\ID$. Since
 $\one=\sum_{a\in A}\one_a$ and $F_a=\mu(\one_a)$, 
 $\mu(\one)=\ID$ if and only if
$\sum_{a\in A}F_a=\ID$.

If $\iota$ is a perfect realization, then $\mu$~is an algebra
homomorphism, and so each $F_a=\mu(\one_a)$ satisfies
$F_a^2=F_a$. Vice versa, suppose that each~$F_a$ is a projection,
necessarily orthogonal. Then for any $xa\in\Gamma$ with $|xa|=|x|+1$,
one has that $\mu(\one_{xa})= \pi(x)\mu(\one_a)\pi(x)^{-1}=
\pi(x)F_a\pi(x)^{-1}$ is also an orthogonal projection. For any~$n\geq
1$, one has $\ID=\mu(\one)=\sum_{|xa|=|x|+1=n}
\mu(\one_{xa})$ and so one may apply Lemma~\ref{proj-sum} and deduce
that the projections in this sum are all mutually orthogonal, and the
product of any two of them is zero. It follows easily that $\mu$~is an
algebra homomorphism when restricted to the subalgebra
$\sum_{|xa|=|x|+1=n}\CC\,\one_{xa}$. This holds
for any~$n$, so~$\mu$ is an algebra homomorphism on $C^\infty(\bGG)$,
hence on all of~$C(\bGG)$.
\end{proof}

\subsection{Direct sums of boundary intertwiners}\label{dir-sums}
Let~$\mu_1$ and~$\mu_2$ be two mu-maps, and let $(\iota_1,\mcH_1)$ and
$(\iota_2,\mcH_2)$ be the associated boundary intertwiners. What is
the boundary intertwiner associated to $\mu_1+\mu_2$?  Consider
$(\iota_1\oplus\iota_2,\mcH_1'\oplus\mcH_2')$. It is trivial to check
that this has $\mu_1+\mu_2$ as its mu-map. However it may not satisfy
the condition that $(\iota_1\oplus\iota_2)(\mcH)$ is cyclic in
$\mcH_1'\oplus\mcH_2'$. So let $\mcH_S'$~be the closure of
$(\pi'_1\oplus\pi'_2)(C(\bGG))(\iota_1\oplus\iota_2)(\mcH)$, and
define $\iota_S$ by the following diagram:
\begin{equation*}
\begin{tikzcd}[row sep=tiny]
  & \mcH_S'\arrow[dd] \\
  \mcH\arrow[ru,"\iota_S" pos=0.3]
    \arrow[rd,"\iota_1\oplus\iota_2"' pos=0.7] \\
  & \mcH'_1\oplus\mcH'_2
\end{tikzcd}
\end{equation*}
The boundary intertwiner we are looking for is~$(\iota_S,\mcH'_S)$,
which we will denote by $(\iota_1,\mcH_1')\oplus(\iota_2,\mcH_2')$
and call the \defn{direct sum} of the two boundary intertwiners.

\begin{lemma}\label{sum-two-irrin}
Let $(\iota_j,\mcH'_j)_{j=1,2}$ be boundary intertwiners
mapping~$\mcH$ to \emph{irreducible and inequivalent} $\cp$-spaces
$(\mcH'_j)_{j=1,2}$. Then their direct sum is just
$(\iota_1\oplus\iota_2,\mcH'_1\oplus\mcH'_2)$.
\end{lemma}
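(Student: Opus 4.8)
The plan is to show that the closed $\cp$-invariant subspace $\mcH'_S\subseteq\mcH'_1\oplus\mcH'_2$ occurring in the definition of the direct sum is in fact all of $\mcH'_1\oplus\mcH'_2$. Once this is known the map $\iota_S$ in the defining diagram is literally $\iota_1\oplus\iota_2$, and there is nothing further to prove. So the whole content of the lemma is the assertion $\mcH'_S=\mcH'_1\oplus\mcH'_2$.

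First I would observe that $\iota_1$ and $\iota_2$ are both nonzero: each $\mcH'_j$ is irreducible, hence nonzero, and $\iota_j(\mcH)$ is cyclic for $C(\bGG)$ acting on $\mcH'_j$, so $\iota_j(\mcH)\neq 0$. Next, let $P$ be the orthogonal projection of $\mcH'_1\oplus\mcH'_2$ onto $\mcH'_S$. Since $\mcH'_S$ is invariant under $\pi'_1\oplus\pi'_2$, the operator $P$ lies in the commutant of that $\cp$-representation. Write $P$ as a $2\times 2$ operator matrix with respect to the decomposition $\mcH'_1\oplus\mcH'_2$. Comparing blocks in the identity $(\pi'_1\oplus\pi'_2)(\xi)P=P(\pi'_1\oplus\pi'_2)(\xi)$ for $\xi\in\cp$ shows that the diagonal entries commute with $\pi'_1$ and with $\pi'_2$ respectively, hence are scalars $\alpha\ID$ and $\delta\ID$ by Schur's lemma, while the off-diagonal entry intertwines $\pi'_2$ with $\pi'_1$ and therefore vanishes, since $\pi'_1$ and $\pi'_2$ are irreducible and inequivalent and so admit no nonzero intertwiner. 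Thus $P=\alpha\ID\oplus\delta\ID$ on $\mcH'_1\oplus\mcH'_2$, and because $P$ is a projection, $\alpha,\delta\in\{0,1\}$.

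To finish, I would use that $(\iota_1\oplus\iota_2)(\mcH)\subseteq\mcH'_S$, so $P$ fixes every vector $(\iota_1 v,\iota_2 v)$ with $v\in\mcH$. Reading off the two components gives $\alpha\,\iota_1 v=\iota_1 v$ and $\delta\,\iota_2 v=\iota_2 v$ for all $v$; since $\iota_1\neq 0$ and $\iota_2\neq 0$ this forces $\alpha=\delta=1$, i.e. $P=\ID$ and $\mcH'_S=\mcH'_1\oplus\mcH'_2$, as required.

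The only input that is not pure bookkeeping is the structure of the commutant of $\pi'_1\oplus\pi'_2$, which rests on Schur's lemma for $C^*$-algebra representations together with the standard fact that two inequivalent irreducible representations have no nonzero bounded intertwiner (any such intertwiner, via its polar decomposition, would give a unitary equivalence). Both facts are classical, so I do not anticipate a genuine obstacle: the lemma is essentially a direct consequence of irreducibility and inequivalence.
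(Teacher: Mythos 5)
Your proof is correct and follows essentially the same route as the paper: the paper simply asserts that the only $\cp$-subspaces of $\mcH'_1\oplus\mcH'_2$ are $0$, $\mcH'_1$, $\mcH'_2$, and the whole space, and then notes that $\mcH'_S$ contains the diagonal image of $\mcH$, which meets both factors nontrivially. You have merely filled in the commutant/Schur's-lemma computation behind that classification, which is a welcome but not substantively different elaboration.
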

\begin{proof}
  By construction,
  $\mcH'_S$ is a $\cp$-subspace of
  $\mcH_1'\oplus\mcH_2'$. Because~$\mcH_1'$ and~$\mcH_2'$ are
  irreducible and inequivalent, the only possibilities for $\mcH'_S$
  are $0$, $\mcH'_1$, $\mcH'_2$, and $\mcH'_1\oplus\mcH'_2$. Since
  $\iota_1(\mcH)\oplus\iota_2(\mcH)\subseteq\mcH'_S$ we see that
  only $\mcH'_S=\mcH'_1\oplus\mcH'_2$ is possible.
\end{proof}


\subsection{Scalar multiples of boundary intertwiners}\label{scalar-mults}
Let $\mu$~be a mu-map and let $(\iota,\mcH')$ be the associated
boundary intertwiner. For $t>0$, $t\mu$~is another mu-map, whose
associated boundary intertwiner is $(\sqrt{t}\,\iota,\mcH')$. For
$t=0$, we have $t\mu=0$, and the associated boundary intertwiner
is the zero map to the zero $\cp$-space.

\subsection{What does $\mu\leq\mu_1$ mean?}

\begin{proposition}\label{sub-rep}
Assume that $\mu$ and $\mu_1$ are mu-maps for $\pi$ with associated
boundary intertwiners $(\iota,\mcH')$ and $(\iota_1,\mcH'_1)$. If
$\mu\leq\mu_1$ then there exists a $\cp$-map $\phi:\mcH'_1\to \mcH'$
so that $\phi\iota_1=\iota$. The image of~$\phi$ is dense and
$\norm{\phi}\leq 1$.
\end{proposition}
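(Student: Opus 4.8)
The plan is to build $\phi$ directly on a dense subspace of $\mcH'_1$ by the obvious formula and to extract boundedness from the positivity of the difference $\mu_1-\mu$. First I would record that, since $\mu\le\mu_1$, the map $\nu=\mu_1-\mu$ is a positive linear map $C(\bGG)\to\mcB(\mcH)$ and inherits the covariance relation~\eqref{muCondE} from $\mu$ and $\mu_1$; hence $\nu$ is itself a mu-map for $\pi$, and by Proposition~\ref{mu-gives-iota} it has an associated iota-intertwiner $(\iota_\nu,\mcH'_\nu,\pi'_\nu)$. The norm computation carried out in the proof of Lemma~\ref{Equiv} then applies verbatim (to any boundary intertwiner, not just the canonical model) and gives, for every finite family $G_j\in C(\bGG)$, $v_j\in\mcH$,
\[
  \Bigl\|\sum_j\pi'(G_j)\iota(v_j)\Bigr\|^2=\sum_{j,k}\langle\mu(\overline{G_k}G_j)v_j,v_k\rangle,
\]
and the analogous identities with $(\mu_1,\iota_1,\pi'_1)$ and with $(\nu,\iota_\nu,\pi'_\nu)$. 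Since the $\nu$-sum is a squared norm it is $\ge 0$, and subtracting yields the basic inequality
\[
  \Bigl\|\sum_j\pi'(G_j)\iota(v_j)\Bigr\|^2\le\Bigl\|\sum_j\pi'_1(G_j)\iota_1(v_j)\Bigr\|^2 .
\]

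Next I would define $\phi$ on the subspace $\bigl\{\sum_j\pi'_1(G_j)\iota_1(v_j)\bigr\}$ of $\mcH'_1$, which is dense because $\iota_1(\mcH)$ is cyclic for the $C(\bGG)$-action, by $\phi\bigl(\sum_j\pi'_1(G_j)\iota_1(v_j)\bigr)=\sum_j\pi'(G_j)\iota(v_j)$. The displayed inequality does double duty: it shows that the value does not depend on the chosen representation of the argument (so $\phi$ is a well-defined linear map) and that $\|\phi\|\le 1$ on this dense subspace, so $\phi$ extends by continuity to a bounded operator $\mcH'_1\to\mcH'$ with $\|\phi\|\le 1$. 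Its image contains $\bigl\{\sum_j\pi'(G_j)\iota(v_j)\bigr\}$, which is dense in $\mcH'$ since $\iota(\mcH)$ is cyclic, so $\phi$ has dense image. Taking the single term $G=\one$ gives $\phi\iota_1(v)=\pi'(\one)\iota(v)=\iota(v)$, i.e.\ $\phi\iota_1=\iota$. Finally, reading the defining formula term-by-term shows $\phi\pi'_1(G)=\pi'(G)\phi$ on the dense subspace, and using $\pi'_1(x)\iota_1=\iota_1\pi(x)$ together with the covariance relations (with primes and with subscripts $1$) shows $\phi\pi'_1(x)=\pi'(x)\phi$ there as well; by continuity these hold on all of $\mcH'_1$, so $\phi$ is a $\cp$-map.

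The only substantive step — everything else is bookkeeping — is the positivity inequality, equivalently the fact that $\mu_1-\mu$ induces a positive-semidefinite form on $C(\bGG)\otimes\mcH$, i.e.\ the complete positivity of $\mu_1-\mu$. This is available in two equivalent ways: directly from Stinespring's theorem~\cite{Sti} applied to the positive map $\mu_1-\mu$ on the commutative $C^*$-algebra $C(\bGG)$ (exactly as it was invoked for $\mu$ in the proof of Proposition~\ref{mu-gives-iota}), or, as above, by feeding $\mu_1-\mu$ into Proposition~\ref{mu-gives-iota} to realize that form as a genuine squared norm, which makes nonnegativity automatic.
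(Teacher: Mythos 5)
Your proof is correct and follows essentially the same route as the paper: the paper reduces to the canonical models of Proposition~\ref{mu-gives-iota} via Lemma~\ref{Equiv}, obtains the key inequality $\norm{\sum_j\pi'(G_j)\iota(v_j)}^2\le\norm{\sum_j\pi'_1(G_j)\iota_1(v_j)}^2$ from the complete positivity of $\mu_1-\mu$ (Stinespring), and extends the identity map by continuity. Your variant of working with the given models directly and justifying the positivity by feeding $\nu=\mu_1-\mu$ back into Proposition~\ref{mu-gives-iota} to realize the $\nu$-form as a squared norm is a harmless repackaging of the same argument, and you in any case also name the Stinespring route, which is the one the paper takes.
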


\begin{proof}
By Lemma~\ref{Equiv}, two intertwiners corresponding to the same
mu-map are equivalent. So we may assume that $(\iota_1,\mcH'_1)$ and,
$(\iota,\mcH')$ are as constructed in the proof of
Proposition~\ref{mu-gives-iota} starting from $\mu_1$ and $\mu$
respectively.  Let $X=\sum_j G_j\otimes v_j$. Using the definition of
the norms one has
\begin{equation}\label{sub-rep-eq}
\norm{X}^2_{\mcH'} =
\sum_{j,k}\langle\mu(\overline{G_k}G_j)v_j,v_k\rangle\leq
\sum_{j,k}\langle
{\mu_1}(\overline{G_k}G_j)v_j,v_k\rangle=
\norm{X}^2_{{\mcH_1}'}
\end{equation}
where the inequality holds because, according to Stinespring's
Theorem, $\mu_1-\mu$ is completely positive.  Hence the identity map
extends to a continuous $\cp$-map $\phi:{\mcH_1}'\to\mcH'$. Clearly
the image is dense, $\norm{\phi}\leq 1$ and
\begin{equation*}
  \phi\iota_1(v)=\phi(\one\otimes v)=\one\otimes v=\iota(v)\in\mcH'
  \period
\end{equation*}
\end{proof}

\begin{corollary}\label{sub-rep-irr}
Let $\mu$, $\mu_1$, $(\iota,\mcH')$, $(\iota_1,\mcH'_1)$ be as in
Proposition~\ref{sub-rep}. If $\mcH'_1$ is an irreducible $\cp$-representation, then $\mu=t\mu_1$ for some nonnegative
constant~$t\leq 1$.
\end{corollary}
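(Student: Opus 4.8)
The plan is to feed the map~$\phi$ produced by Proposition~\ref{sub-rep} into Schur's lemma. Recall that Proposition~\ref{sub-rep} gives a $\cp$-map $\phi:\mcH'_1\to\mcH'$ with $\phi\iota_1=\iota$, dense image, and $\norm{\phi}\leq 1$. The key observation is that $\phi^*:\mcH'\to\mcH'_1$ is again a $\cp$-intertwiner: since $\phi\,\pi'_1(a)=\pi'(a)\phi$ for all $a\in\cp$, taking adjoints and using that $a\mapsto a^*$ is a bijection of~$\cp$ gives $\pi'_1(a)\phi^*=\phi^*\pi'(a)$ for all~$a$. Consequently $T:=\phi^*\phi\in\mcB(\mcH'_1)$ is a bounded, positive, self-adjoint operator that commutes with the entire $\cp$-action on~$\mcH'_1$.

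Next I would invoke Schur's lemma for representations of the $C^*$-algebra~$\cp$: because $\mcH'_1$ is irreducible, the commutant of $\pi'_1(\cp)$ consists only of scalars, so $T=t\,\ID$ for some $t\in\CC$; since $T$ is positive, $t\geq 0$, and since $\norm{T}=\norm{\phi}^2\leq 1$, we get $t\leq 1$.

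Finally I would read off the mu-map of~$\iota$ in terms of that of~$\iota_1$. For $G\in C(\bGG)$ and $v_1,v_2\in\mcH$, using $\iota=\phi\iota_1$ and the fact that $\phi$ intertwines the two $C(\bGG)$-actions (so $\pi'(G)\phi=\phi\,\pi'_1(G)$), one has
\begin{equation*}
  \langle\mu(G)v_1,v_2\rangle=\langle\pi'(G)\phi\iota_1 v_1,\phi\iota_1 v_2\rangle
  =\langle\phi^*\phi\,\pi'_1(G)\iota_1 v_1,\iota_1 v_2\rangle
  =t\,\langle\mu_1(G)v_1,v_2\rangle\period
\end{equation*}
Hence $\mu=t\mu_1$ with $0\leq t\leq 1$ (and when $t=0$ this reads $\phi=0$, $\iota=0$, $\mu=0$, consistently).

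There is really no serious obstacle here; the only point requiring a moment's care is that $\phi$ need not be surjective — its image is merely dense — but this is irrelevant, since the Schur argument is applied to the operator $\phi^*\phi$ living on the irreducible space~$\mcH'_1$, not to anything on~$\mcH'$. Everything else is a routine transcription using that boundary intertwiners are determined by their mu-maps (Lemma~\ref{Equiv}).
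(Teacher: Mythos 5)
Your proposal is correct and follows essentially the same route as the paper: both form $T=\phi^*\phi$ on the irreducible space $\mcH'_1$, conclude $T=t\,\ID$ with $0\le t\le 1$ from the self-intertwining property and $\norm{\phi}\le 1$, and then read off $\mu(G)=\iota_1^*\phi^*\phi\,\pi_1'(G)\iota_1=t\mu_1(G)$. The extra detail you supply (that $\phi^*$ intertwines, and the explicit appeal to Schur's lemma for $C^*$-algebra representations) is exactly what the paper leaves implicit.
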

\begin{proof}
Let $\phi$ be as in \ref{sub-rep} and set
$T=\phi^*\phi:\mcH_1'\to\mcH_1'$. Then $T$~intertwines $\pi'_1$ to
itself, hence $T=t\ID$ for some~$t\geq 0$.  Then $t\leq 1$ follows
from $\norm{\phi}\leq 1$.
The following diagram commutes:
\begin{equation*}
\begin{tikzcd}[row sep=tiny]
  & \mcH'_1\arrow[dd,"\phi" pos=0.4] \\
    \mcH\arrow[ru,"\iota_1" pos=0.7] \arrow[rd,"\iota"' pos=0.65] \\
  & \mcH'
\end{tikzcd}
\end{equation*}
Using $\iota=\phi\iota_1$ and $\pi'(G)\phi=\phi\pi_1'(G)$ we have
\begin{equation*}
\mu(G)=\iota^*\pi'(G)\iota=\iota_1^*\phi^*\pi'(G)\phi\iota_1=
\iota_1^*\phi^*\phi\pi_1'(G)\iota_1=t\mu_1(G)
  \period
\end{equation*}
\end{proof}

\begin{corollary}\label{SubMuP}
  Let $(\iota_j)_{j=1,2}$ be boundary intertwiners mapping~$\mcH$ to
  \emph{irreducible and inequivalent} $\cp$-spaces
  $(\mcH'_j)_j$.  Let $(\mu_j)_{j=1,2}$ be the corresponding mu-maps.
  If $\mu$~is some other mu-map satisfying
  $\mu\leq C(\mu_1+\mu_2)$ for some~$C>0$, then
  $\mu=t_1\mu_1+t_2\mu_2$ for some pair $(t_1,t_2)$ of nonnegative
  coefficients.
\end{corollary}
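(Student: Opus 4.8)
The plan is to deduce this from Proposition~\ref{sub-rep} and the argument of Corollary~\ref{sub-rep-irr}, applied not to a single irreducible intertwiner but to the \emph{direct sum} of $\iota_1$ and $\iota_2$. First note that $\nu_j:=C\mu_j$ ($j=1,2$) are again mu-maps, and $\mu\leq\nu_1+\nu_2$. By Section~\ref{scalar-mults} the boundary intertwiner associated to $\nu_j$ is $(\sqrt{C}\,\iota_j,\mcH'_j)$; scaling does not change the representation space, so $\mcH'_1$ and $\mcH'_2$ are still irreducible and mutually inequivalent. Hence, by the direct-sum construction of Section~\ref{dir-sums} together with Lemma~\ref{sum-two-irrin}, the boundary intertwiner associated to the mu-map $\nu_1+\nu_2$ is exactly $(\sqrt{C}\,\iota_1\oplus\sqrt{C}\,\iota_2,\ \mcH'_1\oplus\mcH'_2)$, carrying the $\cp$-action $\pi'_1\oplus\pi'_2$.

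Next I would apply Proposition~\ref{sub-rep}, with its ``$\mu_1$'' taken to be $\nu_1+\nu_2$: since $\mu\leq\nu_1+\nu_2$, there is a $\cp$-map $\phi:\mcH'_1\oplus\mcH'_2\to\mcH'$ with $\norm{\phi}\leq1$, dense image, and $\phi\circ(\sqrt{C}\,\iota_1\oplus\sqrt{C}\,\iota_2)=\iota$. As in the proof of Corollary~\ref{sub-rep-irr}, $\phi^*$ is also a $\cp$-map, so $T:=\phi^*\phi$ is a positive operator on $\mcH'_1\oplus\mcH'_2$ intertwining $\pi'_1\oplus\pi'_2$ with itself. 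Because $\mcH'_1$ and $\mcH'_2$ are irreducible and inequivalent as $\cp$-representations, Schur's lemma identifies the commutant of $\pi'_1\oplus\pi'_2$ with the block-scalar operators, so $T=s_1\ID_{\mcH'_1}\oplus s_2\ID_{\mcH'_2}$; positivity of $T$ forces $s_1,s_2\geq0$ (and $\norm{\phi}\leq1$ forces $s_j\leq1$, though that is not needed here).

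Finally I would repeat the computation of Corollary~\ref{sub-rep-irr}: using $\iota=\phi(\sqrt{C}\,\iota_1\oplus\sqrt{C}\,\iota_2)$ and $\pi'(G)\phi=\phi\,(\pi'_1\oplus\pi'_2)(G)$,
\begin{align*}
\mu(G) &= \iota^*\pi'(G)\iota
= C\,(\iota_1\oplus\iota_2)^*\,\phi^*\phi\,(\pi'_1\oplus\pi'_2)(G)\,(\iota_1\oplus\iota_2)\\
&= C\,(\iota_1\oplus\iota_2)^*\,(s_1\ID\oplus s_2\ID)\bigl(\pi'_1(G)\oplus\pi'_2(G)\bigr)(\iota_1\oplus\iota_2)\\
&= Cs_1\,\iota_1^*\pi'_1(G)\iota_1 + Cs_2\,\iota_2^*\pi'_2(G)\iota_2
= (Cs_1)\,\mu_1(G) + (Cs_2)\,\mu_2(G)\comma
\end{align*}
so the assertion holds with $t_1=Cs_1\geq0$ and $t_2=Cs_2\geq0$.

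The only delicate point is the bookkeeping in the first paragraph: one must correctly identify the boundary intertwiner attached to $\nu_1+\nu_2$, and here Lemma~\ref{sum-two-irrin} is doing the real work. Without the irreducibility and inequivalence of $\mcH'_1$ and $\mcH'_2$, the direct sum could land in a proper $\cp$-subspace of $\mcH'_1\oplus\mcH'_2$, and the Schur-lemma computation of the commutant $\mathbf{T}=\phi^*\phi$ would break down. Everything after that is a routine specialization of Proposition~\ref{sub-rep} and Corollary~\ref{sub-rep-irr}.
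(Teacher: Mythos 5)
Your proof is correct and follows essentially the same route as the paper's: identify the boundary intertwiner of the sum via Lemma~\ref{sum-two-irrin}, invoke Proposition~\ref{sub-rep} to get $\phi$, use Schur's lemma on the $\cp$-intertwiner $\phi^*\phi$ to obtain a block-scalar matrix, and expand $\iota^*\pi'(G)\iota$. The only cosmetic difference is that the paper rescales at the outset to assume $C=1$, whereas you carry $C$ through via $\nu_j=C\mu_j$ and absorb it into the final coefficients.
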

\begin{proof}
By scaling we may assume that $C=1$.  Let $(\iota,\mcH')$~be the
boundary intertwiner corresponding to~$\mu$. According to
Lemma~\ref{sum-two-irrin} the boundary intertwiner associated to
$\mu_1+\mu_2$ is
$(\iota_1\oplus\iota_2,\mcH'_1\oplus\mcH'_2)$. Let
$\phi:\mcH'_1\oplus\mcH'_2\to\mcH'$ be as in
Proposition~\ref{sub-rep}. We have
\begin{equation*}
\begin{tikzcd}[row sep=tiny]
  & \mcH'_1\oplus\mcH'_2\arrow[dd,"\phi" pos=0.4] \\
  \mcH\arrow[ru,"\iota_1\oplus\iota_2" pos=0.7]
    \arrow[rd,"\iota"' pos=0.3] \\
  & \mcH'
\end{tikzcd}
\end{equation*}
Since~$\mcH_1'$ and $\mcH_2'$ are irreducible and inequivalent and
since $\phi^*\phi$ is a $\cp$-intertwiner, it must be given by a
block matrix of the form
\begin{equation*}
\phi^*\phi=
\begin{pmat}
t_1 \ID_{\mcH_1'}& 0\\  
0 & t_2\ID_{\mcH_2'}
\end{pmat}
\qquad\text{with $t_1,t_2\geq 0$.}
\end{equation*}

\begin{align*}
  \mu(G)&=\iota^*\pi'(G)\iota
  =(\iota_1\oplus\iota_2)^*\phi^*\pi'(G)\phi(\iota_1\oplus\iota_2) \\
  &=
  (\iota_1\oplus\iota_2)^*\phi^*\phi
    (\pi_1'(G)\oplus\pi_2'(G))(\iota_1\oplus\iota_2) \\
  &=\iota_1^*(t_1\ID_{\mcH_1'})\pi_1'(G)\iota_1
    +\iota_2^*(t_2\ID_{\mcH_2'})\pi_2'(G)\iota_2
  =t_1\mu_1(G)+t_2\mu_2(G)
  \period
\end{align*}

\end{proof}

\section{The trace inner-product}\label{inner-product}
\subsection {$\TR(T_1,T_2)$ for~$T_1$ and~$T_2$
  positive~semidefinite}
We denote by $\mcB^+(\mcH)\subseteq\mcB(\mcH)$ the subset of
positive~semidefinite operators.  For $T\in\mcB^+(\mcH)$ we recall the
definition of the \defn{trace}:
\begin{equation*}
\tr(T)
=\sum_{i=1}^\infty\langle T e_i,e_i\rangle
\end{equation*}
for some fixed orthonormal basis~$\{e_i\}_{i=0}^\infty$.  Let
$S,T\in\positivi$.  It is well known (see for example \cite{Dix2}
Section 1.6.6) that
\begin{itemize}
\item
$\tr(T)\in[0,+\infty]$.
\item 
$\tr(\alpha T+\beta S)=
\alpha\tr(T)+\beta\tr(S)$ for $\alpha,\beta\in
\RR^+$.
\item
$\tr(TT^*)=\tr(T^*T)=
\norm{T}^2_{HS}$ where $\norm{\cdot}_{HS}$ denotes the Hilbert--Schmidt norm.
\item
If $U\in\mcB(H)$ is unitary, then
$\tr(UTU^{-1})=\tr(T)$.
\item
$\tr(T)$ is independent of the choice of basis.
\end{itemize}

\begin{definition}
For $S,T\in\positivi$ we define
\begin{equation}\label{TR}
\TR(S,T)=\tr(\sqrt{S}T\sqrt{S}) \period
\end{equation}
\end{definition}
The following properties are easily deduced from the
above-mentioned properties of the trace:
\begin{itemize}
\item
$\TR(S,T)\in[0,+\infty]$.
\item 
$\TR(S,T)=\norm{\sqrt{S}\sqrt{T}}^2_{HS}$.
\item
  $\TR(S,T)=\TR(T,S)$.
\item $\TR(S,T)$ is bilinear.  
\item
If $T,S\in\mcB^+(H)$ and $\TR(S,T)=0$, then
$ST=TS=0$.
\item If $U$ is unitary $\TR(US\inv U,UT\inv U)=\TR(S,T)$.
\item
If $T\leq A$ and $S\leq B$ then $\TR(T,S)\leq\TR(A,B)$.
\end{itemize}
\begin{proof}
The first statement is obvious. Let us turn to the others: 
\begin{align*}
 \TR(S,T)&=\tr(\sqrt S T \sqrt S)
  =\tr(\sqrt S \sqrt T\sqrt T \sqrt S)\\
  &=\tr((\sqrt S \sqrt T)(\sqrt S \sqrt T)^*)
  =\norm{\sqrt{S}\sqrt{T}}^2_{HS} \\
  &=\tr((\sqrt S\sqrt T)^*(\sqrt S\sqrt T))
  =\TR(T,S)
  \period
\end{align*}
Bilinearity is now obvious.
  Suppose that $\TR(S,T)=0$. By the second statement
  $\sqrt{S}\sqrt{T}=0$. Multiply on the left by $\sqrt{S}$ and
  on the right by $\sqrt{T}$ to get~$ST=0$.
The next to the last  statement follows from the fact that 
$\sqrt{US\inv U}=U\sqrt{S}\inv U$ when $U$ is unitary.
  Finally, assume that~$T\leq A$ and~$S\leq B$. Choose an
  orthonormal basis $e_i$ and compute
  \begin{align*}
  \TR(T,S)&=\tr(\sqrt T S \sqrt T)=
    \sum_i\langle S\sqrt T  e_i,\sqrt T e_i\rangle\\
    &\leq\sum_i\langle B\sqrt T  e_i,\sqrt T e_i\rangle=\TR(T,B)
    \period
  \end{align*}
  Use now that $\TR(T,B)=\TR(B,T)$ to get 
  $\TR(T,S)\leq\TR(B,A)=\TR(A,B)$.
  \end{proof}

We will use the following versions of Fatou's~Lemma and of the Bounded
Convergence Theorem. They are immediate consequences of the usual
Fatou's~Lemma and Bounded Convergence Theorem for counting measure and
we prove them together.
\begin{proposition}\label{FatouP}
  Suppose that $T$~is positive~semidefinite and that the sequence
  $(T_j)_{j\geq 0}$ is made up of positive~semidefinite operators, is
  increasing, and has a weak limit $T_\infty$. Then
  \begin{equation*}
    \TR(T_\infty,T) \leq \liminf_{j\to\infty} \TR(T_j,T)
     \period
  \end{equation*}
\end{proposition}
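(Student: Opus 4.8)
The plan is to reduce the statement to the ordinary Fatou lemma for series with non-negative terms (i.e.\ for counting measure on $\NN$). Fix an orthonormal basis $\{e_i\}$ of $\mcH$. First I would rewrite each side using the defining formula $\TR(S,T)=\tr(\sqrt{S}\,T\,\sqrt{S})=\sum_i\langle T\sqrt{S}e_i,\sqrt{S}e_i\rangle$, valid for $S,T\in\positivi$. The difficulty is that the basis element on which we test is $\sqrt{T_j}e_i$, which varies with $j$, so a direct term-by-term comparison does not obviously work. To get around this I would instead put the fixed operator $T$ inside the square roots: using $\TR(T_j,T)=\TR(T,T_j)=\sum_i\langle T_j\sqrt{T}e_i,\sqrt{T}e_i\rangle$, so that now the vectors $f_i:=\sqrt{T}e_i$ are independent of $j$, and only the operator $T_j$ moves.

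With this reformulation, set $c_i^{(j)}=\langle T_j f_i,f_i\rangle\ge 0$ and $c_i^{(\infty)}=\langle T_\infty f_i,f_i\rangle$. Since $(T_j)$ is increasing, each sequence $(c_i^{(j)})_j$ is non-decreasing in $j$; and since $T_j\to T_\infty$ weakly, $c_i^{(j)}\to c_i^{(\infty)}$ as $j\to\infty$ for every fixed $i$. Then $\TR(T,T_j)=\sum_i c_i^{(j)}$ and $\TR(T,T_\infty)=\sum_i c_i^{(\infty)}$, all terms non-negative. By the monotone convergence theorem for series (equivalently, Fatou for counting measure),
\begin{equation*}
  \TR(T,T_\infty)=\sum_i c_i^{(\infty)}
   =\sum_i \lim_{j\to\infty} c_i^{(j)}
   =\lim_{j\to\infty}\sum_i c_i^{(j)}
   =\lim_{j\to\infty}\TR(T,T_j),
\end{equation*}
and in particular $\TR(T_\infty,T)=\TR(T,T_\infty)\le\liminf_j\TR(T_j,T)$, which is the assertion (indeed with equality, but the stated inequality suffices).

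One small point to check along the way is that the interchange $\TR(T_j,T)=\TR(T,T_j)$ is legitimate when the common value may be $+\infty$: this is exactly the symmetry property $\TR(S,T)=\TR(T,S)$ listed (and proved) just above in the $[0,+\infty]$-valued setting, so no extra argument is needed. I would also note that $\sqrt{T}e_i$ need not be an orthonormal basis, but that is irrelevant: the computation $\tr(\sqrt{T}\,T_j\,\sqrt{T})=\sum_i\langle T_j\sqrt{T}e_i,\sqrt{T}e_i\rangle$ only requires $\{e_i\}$ to be an orthonormal basis, which it is. The main (and only real) obstacle is recognising that one should symmetrise $\TR$ so that the square roots land on the \emph{fixed} operator $T$; once that is done the result is immediate from monotone convergence, exactly as the authors indicate by calling it a consequence of Fatou's Lemma for counting measure.
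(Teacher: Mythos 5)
Your proposal is correct and follows essentially the same route as the paper: the authors likewise fix an orthonormal basis $\{e_k\}$, set $\varphi_j(k)=\langle T_j\sqrt{T}e_k,\sqrt{T}e_k\rangle$ (so that the square root sits on the fixed operator $T$), and apply Fatou's Lemma for counting measure. Your substitution of monotone convergence for Fatou is harmless here and even yields the slightly stronger conclusion that the $\liminf$ is a genuine limit.
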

\begin{proposition}\label{BdConvP} Suppose that~$T$ and $T_B$~are
  positive~semidefinite and satisfy $\TR(T_B,T)<\infty$. Suppose that
  the sequence $(T_j)_{j\geq 0}$ is made up of positive~semidefinite
  operators, all bounded above by~$T_B$, and has a weak limit
  $T_\infty$. Then
  \begin{equation*}
    \TR(T_\infty,T) = \lim_{j\to\infty} \TR(T_j,T)
     \period
  \end{equation*}
\end{proposition}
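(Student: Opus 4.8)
The plan is to reduce both propositions to the classical Fatou Lemma and the classical Dominated Convergence Theorem for series (i.e.\ for counting measure), by writing each quantity $\TR(\,\cdot\,,T)$ as a single series whose terms are, respectively, monotone in $j$ or dominated by a summable sequence.

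First I would exploit the symmetry $\TR(S,T)=\TR(T,S)$ together with the definition of the trace. Fix an orthonormal basis $\{e_i\}$ of $\mcH$ and set $f_i=\sqrt{T}\,e_i\in\mcH$. Then, for \emph{every} $S\in\positivi$,
\[
  \TR(S,T)=\TR(T,S)=\tr\bigl(\sqrt{T}\,S\,\sqrt{T}\bigr)=\sum_i\langle S f_i,f_i\rangle \period
\]
The crucial point is that the test vectors $f_i$ depend only on $T$ and not on $S$, so that $\TR(T_j,T)$, $\TR(T_\infty,T)$, and (in the second proposition) $\TR(T_B,T)$ are all series over the same index set evaluated on the same vectors. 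Accordingly, put $a_{ij}=\langle T_j f_i,f_i\rangle\ge 0$ and $a_{i\infty}=\langle T_\infty f_i,f_i\rangle\ge 0$; weak convergence $T_j\to T_\infty$ gives $a_{ij}\to a_{i\infty}$ for each fixed $i$ (we only ever pair against the fixed vectors $f_i$, so convergence in the weak operator topology suffices).

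For Proposition~\ref{FatouP}, the hypothesis that $(T_j)$ is increasing makes each sequence $j\mapsto a_{ij}$ nondecreasing, so $a_{i\infty}=\lim_j a_{ij}=\sup_j a_{ij}$; since $\TR(T_j,T)=\sum_i a_{ij}$ and $\TR(T_\infty,T)=\sum_i a_{i\infty}$, the monotone convergence theorem for counting measure (equivalently, Fatou's Lemma for series applied to the double sequence $(a_{ij})$) yields $\sum_i a_{i\infty}\le\liminf_j\sum_i a_{ij}$, which is the asserted inequality. For Proposition~\ref{BdConvP}, set $b_i=\langle T_B f_i,f_i\rangle$; from $T_j\le T_B$ we get $0\le a_{ij}\le b_i$ for all $i,j$, and letting $j\to\infty$ also $0\le a_{i\infty}\le b_i$, while $\sum_i b_i=\TR(T_B,T)<\infty$ by hypothesis. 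Thus the dominated convergence theorem for counting measure applies to $(a_{ij})_i$ with dominating summable sequence $(b_i)_i$, giving $\sum_i a_{ij}\to\sum_i a_{i\infty}$, i.e.\ $\TR(T_j,T)\to\TR(T_\infty,T)$.

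I do not expect a serious obstacle. The one thing to be careful about is to use the representation $\TR(S,T)=\tr(\sqrt{T}\,S\,\sqrt{T})$ rather than $\tr(\sqrt{S}\,T\,\sqrt{S})$, so that the vectors against which one tests are independent of the variable operator; only then can one pass to the scalar convergence theorems term by term. A minor remark worth recording is that the $f_i$ need be neither orthonormal nor linearly independent, which is harmless since only the identity $\TR(S,T)=\sum_i\langle S f_i,f_i\rangle$ (valid for all $S\in\positivi$, with value in $[0,+\infty]$) is used.
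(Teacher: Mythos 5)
Your proposal is correct and is essentially the paper's own proof: the paper likewise fixes an orthonormal basis $\{e_k\}$, sets $\varphi_j(k)=\langle T_j\sqrt{T}e_k,\sqrt{T}e_k\rangle$ (your $a_{kj}$ with $f_k=\sqrt{T}e_k$), notes pointwise convergence from weak convergence, and applies the dominated convergence theorem for counting measure with the dominating sequence $\langle T_B\sqrt{T}e_k,\sqrt{T}e_k\rangle$. Your explicit remark about using $\TR(S,T)=\tr(\sqrt{T}S\sqrt{T})$ so the test vectors do not depend on the variable operator is exactly the (implicit) point the paper relies on.
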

\begin{proof}
    Choose an orthonormal basis $\{ e_k\}$ for $\mcH$ and consider
  the quantities
  $\langle T_j \sqrt T e_k,\sqrt Te_k\rangle=\varphi_j(k)$. Since
  $(T_j)_{j\geq0}$ has a weak limit we know that $\varphi_j$ is pointwise
  convergent for each $k$.

  To get Proposition~\ref{FatouP} apply Fatou's Lemma to $\varphi_j$
  with respect to counting measure.  To get Proposition~\ref{BdConvP}
  observe that the $\varphi_j$ are all bounded above by
  $\varphi_B(k)=\langle T_B\sqrt T e_k,\sqrt T e_k\rangle$ and apply
  the Bounded Convergence Theorem.
\end{proof}
\subsection {$(F_1,F_2)$ for Eff-vectors~$F_1$ and~$F_2$}
\begin{definition}
  If $F=(F_a)_{a\in A}$ and $\tilde F=(\tilde{F_a})_{a\in A}$ are $|A|$-tuples of
  positive semidefinite operators, define
  \begin{equation}\label{()}
    (F,\tilde F)=\sum_{a\neq b}\TR(F_a,\tilde{F_b})
      \period
    \end{equation}
  \end{definition} 

Given this definition, the following Corollaries follow immediately
from Propositions~\ref{FatouP} and~\ref{BdConvP}.
\begin{corollary}\label{FatouC}
  Suppose that $F$~is an $|A|$-tuple of positive~semidefinite
  operators, that the sequence $(F_j)_{j\geq 0}$ is made up of
  similar tuples, that the sequence is increasing componentwise, and
  has a componentwise weak limit $F_\infty$. Then
  \begin{equation*}
    (F_\infty,F) \leq \liminf_{j\to\infty} (F_j,F)
     \period
  \end{equation*}
\end{corollary}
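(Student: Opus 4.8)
The plan is to reduce the statement to a termwise application of Proposition~\ref{FatouP}, using that $A$ is finite so that the sum defining $(F,\tilde F)=\sum_{a\neq b}\TR(F_a,\tilde F_b)$ runs over finitely many ordered pairs $(a,b)$ with $a\neq b$.

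First I would fix such a pair $(a,b)$. By hypothesis the operators $(F_j)_a$ are positive~semidefinite, the sequence $\bigl((F_j)_a\bigr)_{j\geq 0}$ is increasing --- this is precisely what ``increasing componentwise'' provides in the coordinate $a$ --- and it has weak limit $(F_\infty)_a$; meanwhile $F_b$ is a fixed positive~semidefinite operator. So Proposition~\ref{FatouP}, applied with $T=F_b$, $T_j=(F_j)_a$, and $T_\infty=(F_\infty)_a$, yields
\begin{equation*}
  \TR\bigl((F_\infty)_a,F_b\bigr)\leq\liminf_{j\to\infty}\TR\bigl((F_j)_a,F_b\bigr)
  \period
\end{equation*}

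Second I would sum this over the finitely many pairs $(a,b)$ with $a\neq b$, invoking superadditivity of $\liminf$ over a finite family of sequences of values in $[0,+\infty]$ (no $\infty-\infty$ issue arises since all terms are nonnegative, and finiteness of $A$ means no interchange of a limit with an infinite sum is needed). This gives
\begin{equation*}
  (F_\infty,F)=\sum_{a\neq b}\TR\bigl((F_\infty)_a,F_b\bigr)
    \leq\sum_{a\neq b}\liminf_{j\to\infty}\TR\bigl((F_j)_a,F_b\bigr)
    \leq\liminf_{j\to\infty}\sum_{a\neq b}\TR\bigl((F_j)_a,F_b\bigr)
    =\liminf_{j\to\infty}(F_j,F)
    \comma
\end{equation*}
which is the asserted inequality.

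There is essentially no obstacle; the only points worth recording are the finiteness of $A$ (so the relevant sum is finite and the coordinatewise Fatou estimates can simply be added) and the observation that ``increasing componentwise'' is exactly the hypothesis needed to apply Proposition~\ref{FatouP} in each coordinate separately. (The companion statement, a bounded-convergence version deduced the same way from Proposition~\ref{BdConvP}, would be proved identically, additionally using that $(F_B,F)<\infty$ forces $\TR\bigl((F_B)_a,F_b\bigr)<\infty$ for each pair.)
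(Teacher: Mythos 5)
Your proposal is correct and is exactly the argument the paper intends: the text states that Corollaries~\ref{FatouC} and~\ref{BdConvC} ``follow immediately'' from Propositions~\ref{FatouP} and~\ref{BdConvP}, and the immediate deduction is precisely your componentwise application of Proposition~\ref{FatouP} followed by summation over the finitely many pairs $(a,b)$ with $a\neq b$, using superadditivity of $\liminf$. Nothing is missing.
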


\begin{corollary}\label{BdConvC} Suppose that~$F$ and $F_B$~are
  $|A|$-tuples of positive~semidefinite operators satisfying
  $(F_B,F)<\infty$. Suppose that the sequence $(F_j)_{j\geq 0}$ is
  likewise made up of $|A|$-tuples of positive~semidefinite operators,
  all bounded above componentwise by~$F_B$, and suppose that the
  sequence has a componentwise weak limit $F_\infty$. Then
  \begin{equation*}
    (F_\infty,F) = \lim_{j\to\infty} (F_j,F)
     \period
  \end{equation*}
\end{corollary}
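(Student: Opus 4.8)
The plan is to deduce this directly from Proposition~\ref{BdConvP}, applied separately to each of the finitely many summands in the definition of the pairing. First I would unwind the defining formula~\eqref{()}: it gives $(F_j,F)=\sum_{a\neq b}\TR((F_j)_a,F_b)$, and likewise $(F_\infty,F)=\sum_{a\neq b}\TR((F_\infty)_a,F_b)$ and $(F_B,F)=\sum_{a\neq b}\TR((F_B)_a,F_b)$, all finite sums indexed by the ordered pairs $(a,b)\in A\times A$ with $a\neq b$. Note that $(F_\infty)_a$ is positive~semidefinite, being a weak limit of positive~semidefinite operators, so the quantity $(F_\infty,F)$ makes sense.

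Next I would fix one such pair $(a,b)$ and verify the hypotheses of Proposition~\ref{BdConvP} with the substitutions $T=F_b$, $T_B=(F_B)_a$, $T_j=(F_j)_a$, and $T_\infty=(F_\infty)_a$. The componentwise bound $F_j\leq F_B$ gives $(F_j)_a\leq (F_B)_a$; the componentwise weak convergence of $(F_j)_j$ to $F_\infty$ gives $(F_j)_a\to (F_\infty)_a$ weakly; and $\TR((F_B)_a,F_b)<\infty$ because it is one of the nonnegative summands making up the finite quantity $(F_B,F)$. Proposition~\ref{BdConvP} then yields $\TR((F_\infty)_a,F_b)=\lim_{j\to\infty}\TR((F_j)_a,F_b)$.

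Finally I would sum this identity over the finitely many pairs $a\neq b$; since interchanging a finite sum with a limit is automatic, this gives $(F_\infty,F)=\lim_{j\to\infty}(F_j,F)$, as desired. The companion statement Corollary~\ref{FatouC} is proved the same way, using Proposition~\ref{FatouP} in place of Proposition~\ref{BdConvP} and the elementary fact that the $\liminf$ of a finite sum of nonnegative sequences dominates the sum of the $\liminf$'s. I do not expect any genuine obstacle here; the one point worth a moment's attention is that finiteness of each individual term $\TR((F_B)_a,F_b)$ must be extracted from finiteness of their sum, which is immediate because every term is nonnegative.
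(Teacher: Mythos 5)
Your argument is correct and is exactly what the paper intends: the paper simply states that Corollaries~\ref{FatouC} and~\ref{BdConvC} ``follow immediately from Propositions~\ref{FatouP} and~\ref{BdConvP},'' and your termwise application of Proposition~\ref{BdConvP} over the finitely many pairs $a\neq b$, with finiteness of each $\TR((F_B)_a,F_b)$ extracted from nonnegativity, is that immediate deduction spelled out.
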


\begin{proposition}\label{(F,F)=0}
  Let $(\iota,\mcH')$ be a boundary realization with associated
  mu-map~$\mu$ and associated Eff-vector~$F=(F_a)$ as per
  equations~\eqref{mu-defn} and~\eqref{Eff-vector-con-mu}.  Then
  $(\iota,\mcH')$ is perfect if and only if $(F,F)=0$.
\end{proposition}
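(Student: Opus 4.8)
The plan is to reduce the statement to Lemma~\ref{F-perfect} together with the elementary properties of $\TR$ recorded above. Since we are assuming from the outset that $(\iota,\mcH')$ is a boundary realization, Lemma~\ref{F-perfect} already supplies $\sum_{a\in A}F_a=\ID$, and it says that perfectness is equivalent to $F_a^2=F_a$ for every $a\in A$, i.e.\ to each $F_a$ being a (necessarily orthogonal) projection. So the whole claim collapses to: the positive~semidefinite operators $F_a$ are projections if and only if $\sum_{a\neq b}\TR(F_a,F_b)=0$.

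For the forward implication I would assume each $F_a$ is a projection. Then $\sqrt{F_a}=F_a$, and since $\sum_{a\in A}F_a=\ID$, Lemma~\ref{proj-sum} forces the $F_a$ to be mutually orthogonal, so $\sqrt{F_a}\sqrt{F_b}=F_aF_b=0$ whenever $a\neq b$; hence $\TR(F_a,F_b)=\norm{\sqrt{F_a}\sqrt{F_b}}^2_{HS}=0$ for all $a\neq b$, and therefore $(F,F)=0$. For the converse I would first note that every summand $\TR(F_a,F_b)$ lies in $[0,+\infty]$, so $(F,F)=0$ forces $\TR(F_a,F_b)=0$ for each $a\neq b$; by the property ``$\TR(S,T)=0\Rightarrow ST=TS=0$'' this gives $F_aF_b=0$ whenever $a\neq b$. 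Right-multiplying the identity $\sum_{a\in A}F_a=\ID$ by $F_b$ then kills every cross term and leaves $F_b^2=F_b$. A positive~semidefinite idempotent is an orthogonal projection, so this holds for every $b$, and Lemma~\ref{F-perfect} concludes that $(\iota,\mcH')$ is perfect.

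I do not anticipate a genuine obstacle: all the substance is already in Lemma~\ref{F-perfect} and in the observation (Lemma~\ref{proj-sum}) that projections summing to the identity are pairwise orthogonal. The only points requiring a word of care are that $\sqrt{F_a}=F_a$ for a projection, which is what lets the $\TR$-computation go through cleanly, and that the terms of $(F,F)$ are a priori extended reals, so one must remark that a sum of nonnegative terms vanishes exactly when each term does.
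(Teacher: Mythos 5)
Your proof is correct and follows essentially the same route as the paper's: the converse direction (from $(F,F)=0$ to $F_aF_b=0$ for $a\neq b$, then multiplying $\sum_{a\in A}F_a=\ID$ by $F_b$ to get $F_b^2=F_b$ and invoking Lemma~\ref{F-perfect}) is identical to what the paper does. For the forward direction the paper argues slightly more directly, using that $\mu$ is an algebra homomorphism to get $F_aF_b=\mu(\one_a\one_b)=\mu(0)=0$ rather than routing through Lemma~\ref{proj-sum}, but this is an immaterial difference.
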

\begin{proof}
  If $(\iota,\mcH')$ is perfect then $\mu$ is an algebra homomorphism.
  In particular one has
  $F_aF_b=\mu(\one_a)\mu(\one_b)=\mu(\one_a\one_b)=\mu(0)=0$ if $a\neq
  b$ and hence $(F,F)=0$.  Conversely, assume that $(F,F)=\sum_{a\neq
    b}\TR(F_a,F_b)=0$.  Since each $F_a$ is positive semidefinite one
  has $F_aF_b=0=F_bF_a$ when $a\neq b$.  Moreover, since $\iota$ is an
  isometry, one has $\ID=\mu(\one)=\sum_{a\in A}\mu(\one_a)$. Multiply
  both sides by $F_b$ :
 \begin{equation*}
   F_b=\sum_{a\in A}F_bF_a=F_b^2
   \period
 \end{equation*}
 Now apply Lemma~\ref{F-perfect}.
\end{proof}

\begin{proposition}
  Let $(\iota,\mcH')$, $(\tilde\iota,\tilde{\mcH}')$ be two boundary
  realizations with associated mu-maps~$\mu$ and~$\tilde\mu$ and
  associated Eff-vectors~$F=(F_a)$ and~$\tilde F=(\tilde F_a)$.  Then
  $(\iota,\mcH')$ and $(\tilde\iota,\tilde{\mcH}')$ are perfect and
  equivalent if and only if $(F,\tilde F)=0$.
  \end{proposition}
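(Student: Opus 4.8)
The plan is to obtain everything from results already proved: Lemma~\ref{Equiv} (equivalence of intertwiners $\Leftrightarrow$ equality of mu-maps), Lemma~\ref{Eff-leq-mu} ($F$ determines $\mu$), Lemma~\ref{F-perfect} (perfection in terms of $F$), Proposition~\ref{(F,F)=0} ($\iota$ perfect $\Leftrightarrow (F,F)=0$), and the vanishing property of $\TR$, namely that $\TR(S,T)=0$ forces $ST=TS=0$ for $S,T\in\mcB^+(\mcH)$.

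For the easy implication: suppose $(\iota,\mcH')$ and $(\tilde\iota,\tilde{\mcH}')$ are perfect and equivalent. Equivalence of boundary realizations is exactly the relation $\iota\sim\tilde\iota$ of Lemma~\ref{Equiv}, which forces $\mu=\tilde\mu$, hence $F_a=\mu(\one_a)=\tilde\mu(\one_a)=\tilde F_a$ for every $a\in A$, i.e. $F=\tilde F$. Since $\iota$ is perfect, Proposition~\ref{(F,F)=0} gives $(F,F)=0$, and since $F=\tilde F$ this is precisely $(F,\tilde F)=0$.

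For the converse, assume $(F,\tilde F)=\sum_{a\neq b}\TR(F_a,\tilde F_b)=0$. Each summand is $\geq 0$, so $\TR(F_a,\tilde F_b)=0$ for all $a\neq b$, whence $F_a\tilde F_b=\tilde F_b F_a=0$ when $a\neq b$. Both $F$ and $\tilde F$ are Eff-vectors of boundary realizations, so by Lemma~\ref{F-perfect} we have $\sum_a F_a=\sum_a\tilde F_a=\ID$. Multiplying $\ID=\sum_b\tilde F_b$ on the left by $F_a$ kills every term except $b=a$, giving $F_a=F_a\tilde F_a$; taking adjoints (both operators are self-adjoint) gives $F_a=\tilde F_a F_a$ too. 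Symmetrically, multiplying $\ID=\sum_b F_b$ by $\tilde F_a$ yields $\tilde F_a=\tilde F_a F_a=F_a\tilde F_a$. Comparing, $\tilde F_a=F_a\tilde F_a=F_a$, so $F=\tilde F$. Now the cross-relations read $F_aF_b=0$ for $a\neq b$, and multiplying $\ID=\sum_b F_b$ by $F_a$ gives $F_a^2=F_a$, so each $F_a$ is a projection. By Lemma~\ref{F-perfect} both $\iota$ and $\tilde\iota$ are perfect, and $F=\tilde F$ gives $\mu=\tilde\mu$ by Lemma~\ref{Eff-leq-mu}, hence $\iota\sim\tilde\iota$ by Lemma~\ref{Equiv}, i.e. they are equivalent.

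I do not expect a real obstacle: the whole argument is an assembly of the cited lemmas. The only step needing a little care is the bookkeeping in the converse — extracting $F=\tilde F$ from the cross-orthogonality $F_a\tilde F_b=0$ ($a\neq b$) together with the two resolutions of the identity, and then deducing idempotency of the $F_a$; everything else is immediate.
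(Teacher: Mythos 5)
Your proposal is correct and follows essentially the same route as the paper: the forward direction reduces to $F=\tilde F$ plus Proposition~\ref{(F,F)=0}, and the converse extracts $F_a\tilde F_b=\tilde F_bF_a=0$ from the vanishing trace pairing and combines it with the two resolutions of the identity $\sum_aF_a=\sum_a\tilde F_a=\ID$ to force $F_b=\tilde F_bF_b=\tilde F_b$, after which Lemmas~\ref{Eff-leq-mu} and~\ref{Equiv} give equivalence. The only (immaterial) difference is that you verify idempotency of the $F_a$ and cite Lemma~\ref{F-perfect} for perfection, while the paper gets perfection directly from $(F,F)=(F,\tilde F)=0$ via Proposition~\ref{(F,F)=0}.
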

\begin{proof}
  If $(\iota,\mcH')$ and $(\tilde\iota,\tilde{\mcH}')$ are equivalent
  the corresponding Eff-vectors $F$ and $\tilde F$ are equal, so that
  the statement follows from Proposition~\ref{(F,F)=0}.
  Conversely, assume that $(F,\tilde F)=0$ or, equivalently, that
  $F_a\tilde F_b=\tilde F_b F_a=0$ for all $a\neq b$.  Since both
  $\mu$ and $\tilde\mu$ are isometries one has
  \begin{equation}\label{intermezzo}
    \sum_{a\in A} F_a=\sum_{a\in A}\tilde F_a=\ID
    \period
  \end{equation}
  Fix $b\in A$ and multiply the left-hand side of \eqref{intermezzo} by
  $\tilde F_b$ and the right-hand side by $F_b$ to get
  \begin{align*}
    \tilde F_b&=\sum_{a\in A}\tilde F_b F_a=\tilde F_b F_b
      \comma &
    F_b&=\sum_{a\in A}\tilde F_a  F_b=\tilde F_b F_b
    \comma
  \end{align*}
  and conclude that the two realizations are equivalent by
  Lemmas~\ref{Equiv} and~\ref{Eff-leq-mu} and perfect by
  Proposition~\ref{(F,F)=0}.
  \end{proof}
\begin{definition}
   Let $(\iota,\mcH')$ be a boundary intertwiner with associated
   mu-map $\mu$ and associated Eff-vector $F=(F_a)$.  We say that
   $(\iota,\mcH')$ satisfies the \defn{finite trace condition} or
   briefly \eqref{FTC} if
\begin{equation}
  \label{FTC}
  \tag{FTC} (F,F) < \infty \period
\end{equation}
\end{definition}
\begin{remark}
  Since $(F,F)=0$ for any perfect boundary realization, \eqref{FTC} is of
  interest mostly for imperfect boundary realizations.  When
  $(\iota,\mcH)$ is the direct sum of two perfect boundary
  realizations, $(\iota_1,\mcH'_1)$ and $(\iota_2,\mcH'_2)$, the
  corresponding Eff-vector $F$ is the sum of the Eff-vectors
  corresponding to $\iota_1$ and $\iota_2$ and the \eqref{FTC} for
  $F$ becomes
  \begin{equation*}
    (F,F)=2(F_1,F_2)<\infty
   \period 
   \end{equation*}
\end{remark}

\begin{remark}
  The following straightforward property of $(\cdot,\cdot)$ is
  crucial in the next section. Here $a,b,c,d\in A$.
\begin{equation}\label{(TF,F)}
  \begin{aligned}
    (\mcT F,\tilde F)&=\sum_{a\neq b}
      \TR((\mcT F)_a,\tilde{F_b}) \\
     &=\sum_{a\neq b}\sum_{c\neq a^{-1}}\TR(\pi(a)F_c\pi(a)^{-1}
    ,\tilde{F_b})\\
    &= \sum_{\;a\neq b}\sum_{c\neq a^{-1}}\TR(F_c,
    \pi(a)^{-1}\tilde{F_b}\pi(a))\\
    &= \sum_{b\neq d^{-1}}\sum_{c\neq d}\TR(F_c,
    \pi(d)\tilde{F_b}\pi(d)^{-1})
    =(F,\mcT\tilde F)
    \period
  \end{aligned}
  \end{equation}
  
\end{remark}  

\section{A weak limit}\label{weak-limit}

Recall that when~$F_1$ and~$F_2$ are $|A|$-tuples of operators, we
write $F_1\leq F_2$ to mean $(F_1)_a\leq (F_2)_a$ for every $a\in A$.
\begin{proposition}\label{limP} Let $F$~be an Eff-vector. Let~$F_0$ be
  any $|A|$-tuple of positive semidefinite operators satisfying
  $(\mcT^N F_0)_a\leq CF_a$ for some fixed integer~$N\geq 0$, some
  fixed $C>0$, and all~$a\in A$. Then there exists a sequence
  $(\epsilon_j)_j\to 0+$ such that the componentwise weak limit
  \begin{equation*}
    F_L=\wklim_{j\to\infty} \epsilon_j\sum_{n\geq 0} e^{-\epsilon_j n} \mcT^n
       F_0
  \end{equation*}
  exists. Moreover
  \begin{itemize}
  \item $F_L\leq CF$;
  \item $\mcT F_L=F_L$;
  \item if $F_1$~is any Eff-vector satisfying~$(F,F_1)<+\infty$,
    then $(F_L,F_1)=(F_0,F_1)$.
  \end{itemize}
  One can choose $(\epsilon_j)_j$ to be a subsequence of any given
  sequence decreasing to~$0$.
\end{proposition}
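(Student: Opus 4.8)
The plan is to read $S_\epsilon:=\epsilon\sum_{n\ge0}e^{-\epsilon n}\mcT^nF_0$ as an Abel average of the iterates of~$\mcT$ and to run a Tauberian-type argument, using weak-operator compactness to produce the subsequence and the machinery of Section~\ref{inner-product} to control the pairing with~$F_1$. First one checks that $S_\epsilon$ makes sense. The operation $\mcT$ is monotone on tuples of positive~semidefinite operators (conjugation by a unitary and addition both preserve $\le$), so from $(\mcT^NF_0)_a\le CF_a$ together with $\mcT F=F$ one gets $\mcT^nF_0\le CF$ componentwise for every $n\ge N$; hence the partial sums of $\sum_{n\ge0}e^{-\epsilon n}\mcT^nF_0$ are increasing and bounded above componentwise by $\sum_{n=0}^{N-1}\mcT^nF_0+\tfrac{e^{-\epsilon N}}{1-e^{-\epsilon}}CF$, so they converge in the strong operator topology. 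Thus $S_\epsilon$ is a well-defined $|A|$-tuple of positive~semidefinite operators, uniformly bounded in norm for $\epsilon\in(0,1]$. Since the Hilbert spaces are separable, a diagonal argument over the countably many matrix entries of the (finitely many) components extracts, from any prescribed sequence decreasing to~$0$, a subsequence $(\epsilon_j)_j$ along which $S_{\epsilon_j}$ converges componentwise in the weak operator topology to some $|A|$-tuple $F_L$ of positive~semidefinite operators; this also settles the last sentence of the statement.

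Next, $\mcT F_L=F_L$. Shifting the summation index gives $\mcT S_\epsilon=e^{\epsilon}(S_\epsilon-\epsilon F_0)$, hence $\mcT S_\epsilon-S_\epsilon=(e^\epsilon-1)S_\epsilon-e^\epsilon\epsilon F_0\to0$ in norm as $\epsilon\to0+$ (using the uniform bound on $\norm{S_\epsilon}$). Conjugation by a fixed unitary is weak-operator continuous, so $\mcT$ is weak-operator continuous, and passing to the limit along $(\epsilon_j)_j$ gives $\mcT F_L=F_L$; in particular $F_L$ is an Eff-vector. For $F_L\le CF$, peel off the head of the series: write $S_\epsilon=P_\epsilon+e^{-\epsilon N}S''_\epsilon$, where $P_\epsilon=\epsilon\sum_{n=0}^{N-1}e^{-\epsilon n}\mcT^nF_0$ satisfies $\norm{P_\epsilon}\to0$ and $S''_\epsilon:=\epsilon\sum_{m\ge0}e^{-\epsilon m}\mcT^{m+N}F_0$. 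Because $\mcT^{m+N}F_0=\mcT^m(\mcT^NF_0)\le\mcT^m(CF)=CF$, one has $S''_\epsilon\le\tfrac{\epsilon}{1-e^{-\epsilon}}CF$. Along $(\epsilon_j)_j$, $e^{-\epsilon_j N}S''_{\epsilon_j}=S_{\epsilon_j}-P_{\epsilon_j}\to F_L$ weakly while $e^{-\epsilon_j N}S''_{\epsilon_j}\le\tfrac{\epsilon_j e^{-\epsilon_j N}}{1-e^{-\epsilon_j}}CF$ with coefficient tending to~$1$; taking the weak limit yields $F_L\le CF$.

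It remains to prove $(F_L,F_1)=(F_0,F_1)$ when $F_1$ is an Eff-vector with $(F,F_1)<\infty$. Since $\norm{P_{\epsilon_j}}\to0$ and $e^{\epsilon_jN}\to1$, also $S''_{\epsilon_j}\to F_L$ weakly, and $S''_{\epsilon_j}\le 2CF$ once $j$ is large (discard the finitely many exceptional indices). As $(2CF,F_1)=2C(F,F_1)<\infty$, Corollary~\ref{BdConvC} gives $(F_L,F_1)=\lim_j(S''_{\epsilon_j},F_1)$. On the other hand, by normality of the trace the pairing with $F_1$ passes through the increasing sum defining $S''_\epsilon$, so $(S''_\epsilon,F_1)=\epsilon\sum_{m\ge0}e^{-\epsilon m}(\mcT^{m+N}F_0,F_1)$; by the self-adjointness $(\mcT G,\tilde G)=(G,\mcT\tilde G)$ of~\eqref{(TF,F)} and $\mcT F_1=F_1$, every summand collapses to $(\mcT^NF_0,F_1)=(F_0,\mcT^NF_1)=(F_0,F_1)$, which is finite because $\mcT^NF_0\le CF$ forces $(F_0,F_1)=(\mcT^NF_0,F_1)\le C(F,F_1)<\infty$. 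Hence $(S''_\epsilon,F_1)=\tfrac{\epsilon}{1-e^{-\epsilon}}(F_0,F_1)\to(F_0,F_1)$, and therefore $(F_L,F_1)=(F_0,F_1)$.

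The main obstacle is the bookkeeping for the head $P_\epsilon$. One must see simultaneously that $P_\epsilon$ vanishes in norm, so that it disturbs neither the weak limit $F_L$ nor the estimate $F_L\le CF$, and that replacing $F_0$ by $\mcT^NF_0$ changes nothing relevant about the pairing with $F_1$ — the latter being precisely where self-adjointness of $\mcT$ for $(\cdot,\cdot)$ and $\mcT$-invariance of $F_1$ enter. This reduces everything to the clean case $F_0\le CF$, after which the remaining ingredients — strong convergence of increasing bounded operator sums, weak-operator compactness of balls in $\mcB(\mcH)$ for separable $\mcH$, normality of the trace, and Corollary~\ref{BdConvC} — are routine.
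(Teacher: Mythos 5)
Your proof is correct and follows essentially the same route as the paper's: the uniform bound $\mcT^nF_0\leq CF$ for $n\geq N$ coming from monotonicity of $\mcT$ and $\mcT F=F$, weak-operator compactness of bounded sets to extract $(\epsilon_j)_j$, the index shift to get $\mcT F_L=F_L$, and Corollary~\ref{BdConvC} together with the self-adjointness $(\mcT G,\tilde G)=(G,\mcT\tilde G)$ and $\mcT F_1=F_1$ to collapse the pairing to $(F_0,F_1)$. Your explicit bookkeeping of the head $P_\epsilon$ via the decomposition $S_\epsilon=P_\epsilon+e^{-\epsilon N}S''_\epsilon$ only makes precise what the paper leaves implicit when it passes from $\sum_{n\geq 0}$ to $\sum_{n\geq N}$.
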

\begin{proof}
  From equation~\eqref{FDefE}, which defines~$\mcT$, it follows that
  $\mcT F'\leq \mcT F''$ componentwise whenever $F'\leq F''$
  componentwise.  Since $\mcT F=F$, our hypotheses imply
  \begin{equation*}
    (\mcT^{n} F_0)_a\leq C F_a\quad\text{for all $n\geq N$}
  \end{equation*}
  hence
  \begin{equation}\label{TF1}
    (\epsilon\sum_{n\geq N}e^{-\epsilon n}\mcT^{n} F_0)_a
    \leq \frac{\epsilon}{1-e^{-\epsilon}}C F_a
    \period
  \end{equation}
  From~\eqref{TF1} deduce first that the series
  $\sum_{n\geq0}e^{-\epsilon n}\mcT^nF_0$ converges componentwise in
  the norm topology, and then that the quantities
  \begin{equation*}
    \norm{ (\epsilon\sum_{n\geq 0} e^{-\epsilon n} \mcT^nF_0)_a}
  \end{equation*}
  are uniformly bounded for $0<\epsilon\leq1$.  Since the unit ball of
  $\mcB(\mcH)$ is compact and metrizable in the weak operator
  topology, we conclude that there exists a sequence
  $\epsilon_j\to 0+$ such that
\begin{equation*}
  \wklim_{j\to\infty}\epsilon_j\sum_{n\geq 0} e^{-\epsilon_j n} \mcT^nF_0=F_L      
  \end{equation*}
exists componentwise. One gets $F_L\leq CF$ from~\eqref{TF1}.

From the definition of~$\mcT$ it follows easily that~$\mcT$ commutes
with componentwise weak limits. Thus
\begin{align*}
  \mcT  F_L&=\mcT(\wklim_{j\to\infty}\epsilon_j\sum_{n\geq 0}
    e^{-\epsilon_j n} \mcT^nF_0) 
  =\wklim_{j\to\infty}\mcT\epsilon_j\sum_{n\geq 0}
    e^{-\epsilon_j n} \mcT^nF_0 \\
  &=\wklim_{j\to\infty}\epsilon_j\sum_{n\geq 0}
    e^{-\epsilon_j n} \mcT^{n+1}F_0
  =\wklim_{j\to\infty}e^{\epsilon_j}\epsilon_j\sum_{n\geq 1}
    e^{-\epsilon_j n} \mcT^n F_0=F_L
  \period  
\end{align*}
Finally, assume that $F_1$ is another Eff-vector and that
$(F,F_1)<\infty$.  Use~\eqref{TF1} for boundedness and apply
Corollary~\ref{BdConvC}.
  \begin{align*}
    (F_L,F_1)&=(
    \lim_{j\to\infty}\epsilon_j\sum_{n\geq N} e^{-\epsilon_j n} \mcT^nF_0,F_1)
    =\lim_{j\to\infty}(\epsilon_j\sum_{n\geq N} e^{-\epsilon_j n} \mcT^nF_0,F_1)\\
    &=\lim_{j\to\infty}\epsilon_j\sum_{n\geq N} e^{-\epsilon_j n} (\mcT^nF_0,F_1)
    =\lim_{j\to\infty}\epsilon_j\sum_{n\geq N} e^{-\epsilon_j n}(F_0, \mcT^nF_1)\\
    &=\lim_{j\to\infty}\epsilon_j\sum_{n\geq N} e^{-\epsilon_j n}(F_0,F_1)
    =(F_0,F_1)
    \end{align*}
since $\mcT F_1=F_1$.
\end{proof}

\section{Good vectors}\label{good-vectors}
Throughout this section we consider a fixed 
representation~$\pi$ of~$\Gamma$ on~$\mcH$ and a fixed boundary
realization $\iota:\mcH\to\mcH'$ of~$\pi$. The concepts of \defn{good
  vector} and \defn{special good vector} are relative to this~$\pi$
and this~$\iota$. As usual, let
\begin{align*}
\mu(G)&=\iota^*\pi'(G)\iota &
F_a&=\iota^*\pi'(\one_a)\iota
\end{align*}
be the corresponding mu-map and Eff-vector.


For~$v_1$ and~$v_2\in\mcH$, recall that $v_1\otimes\bar v_2$ stands
for the rank~one operator given by
\begin{equation*}
(v_1\otimes\bar v_2)(v)=\langle {v},{v_2}\rangle v_1
\end{equation*}
and that 
\begin{equation*}
\pi(x)(v_1\otimes\bar v_2)\pi(x)^{-1}=
  (\pi(x)v_1)\otimes\overline{(\pi(x)v_2)}
    \period
\end{equation*}

\begin{definition} Say that a vector $v\in\mcH$ is a \defn{good
  vector} with respect to~$\iota$ if there exist $C>0$, $N\geq 0$ so
  that
  \begin{equation*}
  \mcT^N E \leq CF
  \end{equation*}
  where $E_a=v\otimes\bar v$ for every $a\in A$.
  Say that $v$ is a \defn{special good vector} with respect to~$\iota$
  if for some $z\in\Gamma$ and some $C>0$ we have
\begin{align}\label{eq:sgv}
  v\otimes\bar v&\leq C\mu(\one_z)
  &
  v\otimes\bar v&\leq C\mu(\one-\one_z)
  \period
\end{align}
\end{definition}

\begin{remark}\label{potenza-good-vector} If $v$ is a good vector, arguing as in the proof of
 Proposition~\ref{limP}, we see that
\begin{equation*}
\mcT^n E\leq C F\qquad\text{for all $n\geq N$}\period
\end{equation*}
\end{remark}

\begin{remark}\label{perfect-no-gv}
If our realization is perfect, then~$\mu(\one_z)$
and~$\mu(\one-\one_z)$ are disjoint projections.  From that it follows
easily that the only special good vector is the null vector.
\end{remark}

\begin{remark} If~\eqref{eq:sgv} holds with $z=e$, then the second
  inequality gives $v=0$. Therefore the definition of special good
  vector would be equivalent if we considered only nontrivial~$z$.
\end{remark}

\begin{lemma}\label{gv-linear}
  The good vectors make up a linear subspace of~$\mcH$.
\end{lemma}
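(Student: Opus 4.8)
The plan is to show that if $v$ and $w$ are good vectors, then so is $v+w$ (closure under scalar multiplication is immediate from the defining inequality, since replacing $v$ by $cv$ multiplies $E$ by $|c|^2$). The key observation is the operator inequality
\begin{equation*}
  (v+w)\otimes\overline{(v+w)} \leq 2\,(v\otimes\bar v) + 2\,(w\otimes\bar w)
  \comma
\end{equation*}
which follows from the fact that $\bigl((v+w)\otimes\overline{(v+w)}\bigr) + \bigl((v-w)\otimes\overline{(v-w)}\bigr) = 2(v\otimes\bar v) + 2(w\otimes\bar w)$ and that $(v-w)\otimes\overline{(v-w)}$ is positive~semidefinite. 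So if $E^{v}$, $E^{w}$, $E^{v+w}$ denote the $|A|$-tuples with all components equal to $v\otimes\bar v$, $w\otimes\bar w$, $(v+w)\otimes\overline{(v+w)}$ respectively, then $E^{v+w}\leq 2E^v + 2E^w$ componentwise.

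Next I would invoke that $\mcT$ is monotone with respect to the componentwise order and linear. Monotonicity of $\mcT$ is already recorded in the proof of Proposition~\ref{limP} (it follows at once from~\eqref{FDefE}, since each summand $\pi(a)(\cdot)\pi(a)^{-1}$ preserves positivity); linearity of $\mcT$ is clear from~\eqref{FDefE}. Now suppose $v$ is good with constants $C_1, N_1$ and $w$ is good with constants $C_2, N_2$, so $\mcT^{N_1}E^v\leq C_1 F$ and $\mcT^{N_2}E^w\leq C_2 F$. By Remark~\ref{potenza-good-vector} we may raise both exponents to $N=\max(N_1,N_2)$, obtaining $\mcT^{N}E^v\leq C_1 F$ and $\mcT^{N}E^w\leq C_2 F$. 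Applying $\mcT^N$ to the inequality $E^{v+w}\leq 2E^v+2E^w$ and using monotonicity and linearity,
\begin{equation*}
  \mcT^{N}E^{v+w}\leq 2\,\mcT^{N}E^v + 2\,\mcT^{N}E^w \leq (2C_1 + 2C_2)\,F
  \comma
\end{equation*}
so $v+w$ is good with constants $C = 2C_1+2C_2$ and $N$. This gives closure under addition, and together with closure under scalar multiplication, the good vectors form a linear subspace.

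I do not expect a serious obstacle here; the only point requiring a moment's care is the reduction to a common exponent $N$, which is exactly what Remark~\ref{potenza-good-vector} is for, and the rank-one inequality above, which is the standard parallelogram-type estimate for rank-one positive operators. Everything else is linearity and monotonicity of $\mcT$ together with the additivity of the relation $\leq$ for positive~semidefinite operators.
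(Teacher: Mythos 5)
Your proposal is correct and follows exactly the paper's argument: the paper's entire proof is the single inequality $(u+v)\otimes\overline{(u+v)}\leq 2(u\otimes\bar u+v\otimes\bar v)$, and you have simply made explicit the routine steps it leaves implicit (monotonicity and linearity of $\mcT$, and passing to a common exponent $N$ via Remark~\ref{potenza-good-vector}). Nothing is missing and nothing is done differently in substance.
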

\begin{proof}
 $(u+v)\otimes\overline{(u+v)}\leq
2(u\otimes\bar u+v\otimes\bar v)$.
\end{proof}

Recall that $\Gamma(x)\subseteq\Gamma$ is the subset of reduced words
which start with the reduced word for~$x$. Likewise, let
$\tilde\Gamma(x)$ be the subset of reduced words which end with
the reduced word for~$x$.
\begin{lemma}\label{potenzaT}Let $L=(L)_a$ be an $|A|$-tuple of operators
in $\mcB(\mcH)$ and let~$n\geq1$. Then
\begin{equation}\label{potenzaT0}
(\mcT^nL)_a=\sum_{b}
\sum_{\substack{x\in\Gamma\st|x|=n\\
        \text{$x\in\Gamma(a)$, $x\notin\tilde\Gamma(b^{-1})$}}}
\pi(x)L_b\pi(\inv x)\period
\end{equation}
\end{lemma}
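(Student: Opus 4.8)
The plan is to prove formula~\eqref{potenzaT0} by induction on~$n$. The base case $n=1$ is just the definition of~$\mcT$ in equation~\eqref{FDefE}, since the words $x$ with $|x|=1$, $x\in\Gamma(a)$, are exactly the single letters $x=b\in A$, and the condition $x\notin\tilde\Gamma(b^{-1})$, i.e. that the (only) letter of $x$ is not $b^{-1}$, is precisely $ab\neq 1$ after the reindexing; actually at $n=1$ one sees $x=a$ itself, so let me rather set up the bookkeeping carefully: for $n=1$ the inner sum over $x$ with $|x|=1$, $x\in\Gamma(a)$ forces $x=a$, and $x\notin\tilde\Gamma(b^{-1})$ means $a\neq b^{-1}$, i.e. $ab\neq 1$, giving $(\mcT L)_a=\sum_{b\st ab\neq 1}\pi(a)L_b\pi(a^{-1})$, which matches~\eqref{FDefE}.

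For the inductive step, assume~\eqref{potenzaT0} holds for~$n$ and compute $(\mcT^{n+1}L)_a=(\mcT(\mcT^n L))_a=\sum_{c\st ac\neq 1}\pi(a)(\mcT^n L)_c\pi(a^{-1})$. Substituting the inductive formula for $(\mcT^n L)_c$ and conjugating the inner sum by $\pi(a)$ (using $\pi(a)\pi(x)L_b\pi(x^{-1})\pi(a^{-1})=\pi(ax)L_b\pi((ax)^{-1})$), one obtains a double sum over $c\in A$ with $ac\neq 1$ and over $x$ with $|x|=n$, $x\in\Gamma(c)$, $x\notin\tilde\Gamma(b^{-1})$, of the terms $\pi(ax)L_b\pi((ax)^{-1})$. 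The key combinatorial point is that the map $(c,x)\mapsto y=ax$ is a bijection from the set of such pairs onto the set of reduced words $y$ with $|y|=n+1$ and $y\in\Gamma(a)$: indeed $ac\neq 1$ and $x\in\Gamma(c)$ together say exactly that prepending $a$ to $x$ causes no cancellation and produces a word starting with $a$, while conversely any reduced $y$ of length $n+1$ starting with $a$ can be written uniquely as $y=ax$ with $x=a^{-1}y$ reduced of length $n$, whose first letter $c$ satisfies $ac\neq 1$. Under this bijection the leftover condition $x\notin\tilde\Gamma(b^{-1})$ transfers verbatim to $y\notin\tilde\Gamma(b^{-1})$, since $x$ and $y=ax$ have the same terminal letters (they agree in all but the first position, and $n\geq 1$ so the last letter is unaffected). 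This yields exactly the right-hand side of~\eqref{potenzaT0} with $n$ replaced by $n+1$.

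I do not expect any serious obstacle here; the content is entirely bookkeeping about reduced words in a free group, and the only thing that requires a moment's care is the edge behaviour of the $\tilde\Gamma(b^{-1})$ condition. One should note that $x\notin\tilde\Gamma(b^{-1})$ is a genuine constraint only when it can fail, i.e. the word $x$ must actually have length $\geq 1$, which holds throughout since $n\geq 1$; and the bijection $x\leftrightarrow ax$ never alters the suffix of length $n\geq 1$, so the terminal-letter condition is preserved. It is also worth remarking that the formula makes the inner sum range over \emph{all} of $A$ for the index $b$, with the genuine restrictions being encoded in the conditions on $x$; one could equally well drop $b$ from an independent sum and instead let $b$ be determined as the first letter of $x$, but keeping $b$ explicit is what makes the induction transparent. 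Thus the whole argument is: check $n=1$ directly from~\eqref{FDefE}, then push through the bijection $(c,x)\mapsto ax$ in the inductive step.
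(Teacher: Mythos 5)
Your proof is correct and follows essentially the same route as the paper: induction on~$n$, with the base case read off from~\eqref{FDefE} and the inductive step carried by the bijection $(c,x)\mapsto ax$ between pairs with $ac\neq 1$, $x\in\Gamma(c)$ and reduced words $y\in\Gamma(a)$ of length $n+1$, noting that prepending $a$ does not disturb the terminal letter. The paper merely packages the same induction in terms of the matrix entries $(\mcT^n)_{a,b}$ of $\mcT$ acting by $P(x)B=\pi(x)B\pi(x)^{-1}$, which is a cosmetic difference.
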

\begin{proof}
For $B\in\mcB(\mcH)$ let
$P(x)B=\pi(x)B\pi(x)^{-1}$. Using this notation one has
\begin{equation*}
(\mcT L)_a=
\sum_{b}\mcT_{a,b}L_b=\sum_{b\neq a^{-1}}P(a)L_b\period
\end{equation*}
Now use induction. For $n=1$ one has
\begin{equation*}
  (\mcT^1)_{a,b} 
  =(\mcT)_{a,b} 
  =\left\{\begin{aligned}
      P(a) \quad& \text{if $b\neq a^{-1}$}\\
      0 \quad&\text{if $b=a^{-1}$}\\
  \end{aligned}\right\}
  =\sum_{\substack{|x|=1\\
      \text{$x\in\Gamma(a)$, $x\notin\tilde\Gamma(b^{-1})$}}}
  P(x) 
  \period
\end{equation*}
For $n>1$
\begin{multline*}
(\mcT^n)_{a,b}=
  \sum_c \mcT_{a,c}\mcT^{n-1}_{c,b}=\sum_{c\neq a^{-1}}P(a)\mcT^{n-1}_{c,b}\\
  =\sum_{c\neq a^{-1}}\sum_{\substack{|x|=n-1\\
       \text{$x\in\Gamma(c)$, $x\notin\tilde\Gamma(b^{-1})$}}}P(a)P(x)=
  \sum_{\substack{|y|=n\\ 
      \text{$y\in\Gamma(a)$, $y\notin\tilde\Gamma(b^{-1})$}}}
    P(y)
  \period
\end{multline*}
Thus
\begin{align*}
(\mcT^n L)_a &=\sum_b\mcT^n_{a,b}L_b \\
&=\sum_{b}\sum_{\substack{|y|=n \\
      \text{$y\in\Gamma(a)$, $y\notin\tilde\Gamma(b^{-1})$}}}
    P(y)L_b 
&=
 \sum_b\sum_{\substack{|y|=n \\
      \text{$y\in\Gamma(a)$, $y\notin\tilde\Gamma(b^{-1})$}}}
 \pi(y)L_b\pi(y)^{-1}
 \period
\end{align*}

\end{proof}

\begin{lemma}\label{l:good-stable}
The set of good vectors is stable under $\pi(\Gamma)$.
\end{lemma}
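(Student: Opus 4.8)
The plan is to reduce to a single generator and then to use the explicit expansion of~$\mcT^n$ provided by Lemma~\ref{potenzaT}. Since $A$~generates~$\Gamma$ and $A=A^{-1}$, it suffices to prove that $\pi(c)v$~is a good vector whenever $v$~is a good vector and $c\in A$; iterating this fact shows that $\pi(x)v$~is good for every $x\in\Gamma$.

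So fix a good vector~$v$, say $\mcT^N E\leq CF$ with $E_a=v\otimes\bar v$ for all $a\in A$, and fix $c\in A$. Put $w=\pi(c)v$, so that $w\otimes\bar w=\pi(c)(v\otimes\bar v)\pi(c)^{-1}$, and let $\tilde E_a=w\otimes\bar w$ for all~$a$; the goal is to bound $\mcT^M\tilde E$ for a suitable~$M$. First I would apply Lemma~\ref{potenzaT}. Because $\tilde E_b$ is independent of~$b$, and because each reduced word~$x$ with $|x|\geq1$ satisfies $x\notin\tilde\Gamma(b^{-1})$ for exactly $|A|-1$ letters~$b$, the lemma yields, for every $M\geq1$,
\begin{equation*}
  (\mcT^M\tilde E)_a=(|A|-1)\sum_{\substack{|x|=M\\ x\in\Gamma(a)}}\pi(xc)(v\otimes\bar v)\pi\bigl((xc)^{-1}\bigr)\comma
\end{equation*}
and similarly $(\mcT^n E)_a=(|A|-1)\sum_{|y|=n,\,y\in\Gamma(a)}\pi(y)(v\otimes\bar v)\pi(y^{-1})$ for every $n\geq1$.

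The crux is the combinatorics of reduced words. As $x$~runs over the words of length~$M$ beginning with~$a$, the element $xc$ is either a reduced word of length $M+1$ beginning with~$a$ and ending in~$c$ (exactly when $x$~does not end in~$c^{-1}$), or it equals the word $x'$ of length $M-1$, beginning with~$a$ and not ending in~$c$, where $x=x'c^{-1}$ (exactly when $x$~ends in~$c^{-1}$); and each of these two assignments is a bijection onto the indicated family of words. Since all the operators occurring are positive~semidefinite, enlarging the first partial sum to the full sum over length-$(M+1)$ words beginning with~$a$, and the second to the full sum over length-$(M-1)$ words beginning with~$a$, gives
\begin{equation*}
  (\mcT^M\tilde E)_a\leq(\mcT^{M+1}E)_a+(\mcT^{M-1}E)_a\comma
\end{equation*}
valid once $M\geq2$. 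Choosing $M=N+2$, we have $M\geq2$, and $M-1\geq N$ and $M+1\geq N$, so Remark~\ref{potenza-good-vector} gives $(\mcT^{M+1}E)_a\leq CF_a$ and $(\mcT^{M-1}E)_a\leq CF_a$; hence $\mcT^{N+2}\tilde E\leq2CF$, which says exactly that $w=\pi(c)v$ is a good vector.

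The one genuine obstacle is the word bookkeeping in the preceding paragraph, namely keeping track of whether right multiplication by~$c$ lengthens or shortens a reduced word, together with the realization that one must allow $M$~to grow (to $N+2$, say) so that the ``short'' length-$(M-1)$ contribution is still dominated by~$CF$ through Remark~\ref{potenza-good-vector}. Everything else is routine.
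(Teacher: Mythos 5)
Your proof is correct. It takes a recognizably different route from the paper's, even though both arguments rest on the explicit expansion of $\mcT^n$ in Lemma~\ref{potenzaT} and on Remark~\ref{potenza-good-vector}. The paper treats an arbitrary $y\in\Gamma$ in one step: from $(\mcT^nE)_a\leq CF_a$ it extracts the \emph{single-term} bounds $\pi(x)(v\otimes\bar v)\pi(x)^{-1}\leq CF_a$ for each individual $x\in\Gamma(a)$ with $|x|\geq N$ (each positive summand is dominated by the whole positive sum), observes that $xy\in\Gamma(a)$ with $|xy|\geq N$ whenever $x\in\Gamma(a)$ and $|x|\geq N+|y|$, and then reassembles these termwise bounds into $(\mcT^{N+|y|}E')_a\leq C'F_a$, at the cost of a constant $C'$ that absorbs the number of terms. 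You instead reduce to a single generator $c\in A$ and keep the sums intact, splitting the index set $\{xc : |x|=M,\ x\in\Gamma(a)\}$ bijectively into the length-$(M+1)$ and length-$(M-1)$ words in $\Gamma(a)$ and dominating each piece by the corresponding full sum, which yields the clean inequality $(\mcT^M\tilde E)_a\leq(\mcT^{M+1}E)_a+(\mcT^{M-1}E)_a$ and the sharp constant $2C$ at level $N+2$. Your version buys an explicit, uniform constant and makes the word combinatorics transparent; the paper's buys brevity and handles general $y$ without induction over generators. Your attention to the edge case $M\geq 2$ (so that deleting the final letter $c^{-1}$ still leaves a word beginning with $a$) is exactly the point that needs care, and you handle it correctly by taking $M=N+2$.
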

\begin{proof} Let $E$~be the vector with $E_a=v\otimes\bar
v$ and $E'$ the vector with $E'_a=\pi(y)v\otimes\overline{\pi(y)v}$.
If~$v$ is good, then we have $\mcT^NE\leq CF$ for some~$N$, hence
$(\mcT^nE)_a\leq CF_a$ for all~$a\in A$, and all~$n\geq N$.  According
to~\eqref{potenzaT0} this implies that
$\pi(x)v\otimes\overline{\pi(x)v}\leq CF_a$ whenever $x\in\GG(a)$ and
$|x|\geq N$. Consequently
$\pi(x)\pi(y)v\otimes\overline{\pi(x)\pi(y)v}\leq CF_a$ whenever
$x\in\GG(a)$ and~$|x|\geq N+|y|$.  Use~\eqref{potenzaT0} again to
deduce that $(\mcT^{N+|y|} E')_a\leq C'F_a$.
\end{proof}

\begin{lemma}\label{gvb} If $v$~is a good vector, then there
  exists~$C=C(v)>0$, independent of~$n$ so that
  \begin{equation*}
    \sum_{|x|=n} \pi(x)v\otimes\overline{\pi(x)v} \leq C\ID
    \period
  \end{equation*}
\end{lemma}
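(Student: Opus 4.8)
The plan is to feed the defining inequality of a good vector into the explicit formula of Lemma~\ref{potenzaT}, sum over the index $a\in A$, and exploit the fact that $\iota$ is a boundary realization.

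Recall that $v$ good means $\mcT^N E\le CF$ for the tuple $E$ with every component $E_a=v\otimes\bar v$, and that by Remark~\ref{potenza-good-vector} this gives $\mcT^n E\le CF$ for all $n\ge N$; enlarging $N$ if necessary we may assume $N\ge1$. Now fix $n\ge1$ and apply Lemma~\ref{potenzaT} with $L=E$. Since $\pi(x)(v\otimes\bar v)\pi(\inv x)=\pi(x)v\otimes\overline{\pi(x)v}$ is the same operator for every choice of $b$, formula~\eqref{potenzaT0} becomes
\[
  (\mcT^n E)_a=\sum_{\substack{|x|=n\\ x\in\Gamma(a)}}
  \#\{\,b\in A : x\notin\tilde\Gamma(\inv b)\,\}\;
  \bigl(\pi(x)v\otimes\overline{\pi(x)v}\bigr).
\]
A reduced word $x$ of length $n\ge1$ ends in exactly one letter $c\in A$, and $x\in\tilde\Gamma(\inv b)$ precisely when $\inv b=c$; so exactly one value of $b$ is excluded and the multiplicity above equals $|A|-1$ for every such $x$. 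Summing over $a\in A$ and using that each reduced word of length $n$ begins with exactly one letter, I get
\[
  \sum_{a\in A}(\mcT^n E)_a=(|A|-1)\sum_{|x|=n}\pi(x)v\otimes\overline{\pi(x)v}.
\]

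Since $\iota$ is a boundary realization, Lemma~\ref{F-perfect} gives $\sum_{a\in A}F_a=\ID$, so for $n\ge N$ the componentwise inequality $\mcT^n E\le CF$ yields
\[
  (|A|-1)\sum_{|x|=n}\pi(x)v\otimes\overline{\pi(x)v}
  =\sum_{a\in A}(\mcT^n E)_a\le C\sum_{a\in A}F_a=C\,\ID,
\]
hence $\sum_{|x|=n}\pi(x)v\otimes\overline{\pi(x)v}\le\frac{C}{|A|-1}\,\ID$ for all $n\ge N$. For the finitely many remaining values $n<N$ the bound is trivial: each $\pi(x)$ is unitary, so $\pi(x)v\otimes\overline{\pi(x)v}\le\norm{v}^2\ID$, and there are only finitely many $x$ with $|x|=n$; taking the largest of these finitely many bounds together with $C/(|A|-1)$ produces one constant $C(v)$ valid for all $n$. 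The only slightly delicate point is the multiplicity count in the first two displays, and it is entirely elementary, so I do not anticipate any real obstacle.
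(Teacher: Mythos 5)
Your proof is correct and follows essentially the same route as the paper: apply Lemma~\ref{potenzaT} to the tuple $E$, observe that each reduced word of length $n$ is counted with multiplicity $|A|-1$ when summing over $a$ and $b$, and use $\sum_{a\in A}F_a=\ID$ to conclude. The paper's own proof is the same computation, with the multiplicity count stated slightly more tersely.
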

\begin{proof}
  By Remark~\ref{potenza-good-vector} there exist $C>0$, $N>0$ so that
  $CF\geq\mcT^n E$ for every~$n\geq N$ where $E_a=v\otimes\bar v$ for
  every $a$. The finite number of values of~$n$ with~$n<N$ create no
  difficulty, so assume $n\geq N$ and $n\geq 1$. Sum over~$a$ the
  inequalities $CF_a\geq (\mcT^n E)_a$ and apply Lemma~\ref{potenzaT}
  to get
  \begin{multline*}
    C\ID=C\sum_{a\in A} F_a \geq
     \sum_{a,b\in A}\sum_{\substack{|x|=n\\ 
        \text{$x\in\Gamma(a)$, $x\notin\tilde\Gamma(b^{-1})$}}}
    \pi(x)(v\otimes\bar v)\pi(x)^{-1} \\
   =    \sum_{a,b\in A}\sum_{\substack{x\in\Gamma\st |x|=n\\ 
        \text{$x\in\Gamma(a)$, $x\notin\tilde\Gamma(b^{-1})$}}}
   \pi(x)v\otimes\overline{\pi(x)v}
   = q\sum_{|x|=n}  \pi(x)v\otimes\overline{\pi(x)v}
  \end{multline*}
  where $q+1=|A|$.
\end{proof}

\begin{corollary}\label{gvb-4}
  If $v$~is a good vector, then there
  exists~$C=C(v)>0$, independent of~$n$ and~$w$ so that
  \begin{equation*}
    \sum_{|x|=n} 
     |\langle w,\pi(x) v\rangle|^2
     \leq C\norm{w}^2
     \period
  \end{equation*}
\end{corollary}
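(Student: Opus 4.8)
The plan is to derive this immediately from Lemma~\ref{gvb}. Since $v$ is a good vector, that lemma provides a constant $C=C(v)>0$, independent of~$n$, with
\begin{equation*}
  \sum_{|x|=n}\pi(x)v\otimes\overline{\pi(x)v}\leq C\,\ID
  \period
\end{equation*}
The strategy is simply to pair this operator inequality against an arbitrary $w\in\mcH$ on both sides.

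First I would record, for a single~$x$, the value of $\langle(\pi(x)v\otimes\overline{\pi(x)v})w,w\rangle$. By the definition of the rank-one operator, $(\pi(x)v\otimes\overline{\pi(x)v})w=\langle w,\pi(x)v\rangle\,\pi(x)v$, so taking the inner product with~$w$ yields $\langle w,\pi(x)v\rangle\,\overline{\langle w,\pi(x)v\rangle}=|\langle w,\pi(x)v\rangle|^2$. Summing over all reduced words~$x$ of length~$n$ and invoking the operator inequality above gives
\begin{equation*}
  \sum_{|x|=n}|\langle w,\pi(x)v\rangle|^2
  =\Bigl\langle\Bigl(\sum_{|x|=n}\pi(x)v\otimes\overline{\pi(x)v}\Bigr)w,w\Bigr\rangle
  \leq C\langle w,w\rangle=C\norm{w}^2
  \period
\end{equation*}

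There is no real obstacle here: the only point requiring a moment's care is the inner-product convention in the definition of $v_1\otimes\bar v_2$, which is exactly what produces the absolute-value square on the left-hand side. All the substance of the corollary is already contained in Lemma~\ref{gvb}, and this argument just unpacks it for a general vector~$w$.
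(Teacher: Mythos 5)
Your argument is correct and is exactly the intended deduction: the paper states this as an immediate corollary of Lemma~\ref{gvb} without written proof, and pairing the operator inequality against~$w$, using the convention $(v_1\otimes\bar v_2)(v)=\langle v,v_2\rangle v_1$ to identify $\langle(\pi(x)v\otimes\overline{\pi(x)v})w,w\rangle$ with $|\langle w,\pi(x)v\rangle|^2$, is precisely the step being left to the reader.
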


What about the existence of good vectors?
\begin{lemma}\label{sgv-is-gv}
  If $v$~is a special good vector then
\begin{itemize}
\item There exist $C>0$, $N>0$ so that if $a\in A$,
  $x\in\Gamma$,$|ax|=1+|x|$, and $|x|\geq N$, then
  $\pi(ax)(v\otimes\bar v)\pi(ax)^{-1}\leq C\mu(\one_a)=CF_a$.
\item $v$ is a good vector.
\end{itemize}
\end{lemma}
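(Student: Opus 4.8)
The plan is to prove the two bullets in turn, the second resting on a refinement of the first. \emph{First bullet.} Let $z\in\Gamma$ and $C>0$ be as in the definition of special good vector, and put $N=|z|$. Fix $a\in A$ and $x\in\Gamma$ with $|ax|=1+|x|$ and $|x|\ge N$. Conjugating the two inequalities~\eqref{eq:sgv} by the unitary $\pi(ax)$ and using covariance~\eqref{muCondE} together with $\lambda(ax)\one_z=\one_{(ax)\bGG(z)}$, where $(ax)\bGG(z)=\{(ax)\omega:\omega\in\bGG(z)\}$ is clopen, gives
\begin{equation*}
  \pi(ax)(v\otimes\bar v)\pi(ax)^{-1}\le C\,\mu\bigl(\one_{(ax)\bGG(z)}\bigr)
  \quad\text{and}\quad
  \pi(ax)(v\otimes\bar v)\pi(ax)^{-1}\le C\,\mu\bigl(\one_{\bGG\setminus(ax)\bGG(z)}\bigr)
  \period
\end{equation*}
So it suffices to show that one of the clopen sets $(ax)\bGG(z)$, $\bGG\setminus(ax)\bGG(z)$ lies inside $\bGG(a)$; the matching $\mu(\one_{\,\cdot\,})$ is then $\le\mu(\one_a)=F_a$. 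Here is the dichotomy: for $\xi\notin\bGG(a)$ the reduced word $(ax)^{-1}=x^{-1}a^{-1}$ cannot cancel against $\xi$ (since $ax$ is reduced and $\xi$ does not begin with $a$), so $(ax)^{-1}\xi$ begins with $(ax)^{-1}$, a word of length $1+|x|>|z|$; hence $(ax)^{-1}\xi\in\bGG(z)$ iff $z$ is a prefix of $(ax)^{-1}$, which does not involve $\xi$. Thus if $z$ is a prefix of $(ax)^{-1}$, equivalently $ax$ ends with $z^{-1}$, then $\bGG\setminus\bGG(a)\subseteq(ax)\bGG(z)$, so $\bGG\setminus(ax)\bGG(z)\subseteq\bGG(a)$ and the second inequality finishes the proof; otherwise $(ax)\bGG(z)\subseteq\bGG(a)$ and the first one does.

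For the \emph{second bullet} I first refine the computation above: a short reduced-word manipulation shows the set used in each branch is in fact a cylinder $\bGG(\tau)$ with $\tau$ beginning with $a$ --- namely $(ax)\bGG(z)=\bGG\bigl(\overline{(ax)z}\bigr)$ when $ax$ does not end with $z^{-1}$, and $\bGG\setminus(ax)\bGG(z)=\bGG\bigl(\overline{(ax)z}\,t^{-1}\bigr)$ with $t$ the last letter of $z$ when it does, where $\overline{w}$ denotes the reduced word for $w\in\Gamma$. In both cases $|\tau|$ lies in $\{1+|x|-|z|,\dots,1+|x|+|z|\}$, and, crucially, $\tau$ recovers $ax$ (multiply by $z^{-1}$, after deleting the last letter of $\tau$ in the second branch). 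Now take $N=|z|+1$ and let $E$ be the tuple with $E_a=v\otimes\bar v$ for every $a$. By Lemma~\ref{potenzaT}, $(\mcT^{N}E)_a=(|A|-1)\sum_{x\in\Gamma(a),\,|x|=N}\pi(x)(v\otimes\bar v)\pi(x)^{-1}$; each such $x$ has first letter $a$ and length $|z|+1$, so the refined first bullet applies and bounds the $x$-th summand by $C\,\mu(\one_{\bGG(\tau_x)})$ with $\bGG(\tau_x)\subseteq\bGG(a)$, whence $(\mcT^{N}E)_a\le(|A|-1)C\,\mu\bigl(\sum_x\one_{\bGG(\tau_x)}\bigr)$. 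It remains to see that $\sum_{x\in\Gamma(a),\,|x|=N}\one_{\bGG(\tau_x)}\le M\,\one_a$ for an $M$ depending only on $|z|$: at a point $\xi\in\bGG(a)$ the $x$-th term is nonzero only if $\tau_x$ is a prefix of $\xi$, hence equals the prefix of $\xi$ of length $|\tau_x|$; since there are at most $|z|+1$ admissible values of $|\tau_x|$ and each prefix of $\xi$ equals $\tau_x$ for at most two $x$ (one per branch), the bound holds with $M=2(|z|+1)$. Then $\mcT^{N}E\le(|A|-1)CM\,F$, so $v$ is a good vector.

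The one genuinely substantive point is the dichotomy in the first bullet: the naive expectation $(ax)\bGG(z)\subseteq\bGG(a)$ fails \emph{exactly} when $ax$ ends with $z^{-1}$, and the role of hypothesis~\eqref{eq:sgv} is precisely that this exceptional case is then caught by the \emph{second} inequality. The other ingredient needing attention is that $x\mapsto\tau_x$ is finite-to-one with a bound independent of $|x|$; this is what collapses the exponentially many words of a given length onto the single operator $F_a$, and so is where the power $N$ in the definition of good vector is spent. The remaining verifications --- the no-cancellation claims and the explicit description of the translated cylinders as $\bGG(\overline{(ax)z})$ or $\bGG(\overline{(ax)z}\,t^{-1})$ --- are routine computations with reduced words in a free group.
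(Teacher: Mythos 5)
Your proof is correct. The first bullet is essentially the paper's own argument: the same dichotomy on whether $ax$ ends with $z^{-1}$ (equivalent to $x\in\tilde\Gamma(z^{-1})$ once $|x|\geq|z|$), deciding which of the two inequalities in~\eqref{eq:sgv} to conjugate by $\pi(ax)$; you merely verify the inclusion into $\bGG(a)$ by testing points of the complement rather than tracking the cancellation in $(ax)z\omega$ directly. Where you diverge is the second bullet. The paper simply observes that, by Lemma~\ref{potenzaT}, $(\mcT^{|z|+1}E)_a$ is a sum of finitely many terms each bounded by $CF_a$ (first bullet), and absorbs the number of terms into a new constant --- since the definition of good vector asks only for \emph{some} finite $C$ at \emph{some} fixed $N$, the exponential count of words of length $|z|+1$ is harmless. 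Your bounded-overlap argument for the cylinders $\bGG(\tau_x)$ is therefore unnecessary; it is also where your one slip occurs (the admissible lengths of $\tau_x$ range over an interval of size $2|z|+1$, not $|z|+1$, so your $M$ should be $2(2|z|+1)$ --- immaterial, since any finite $M$ works). What your refinement buys is a constant polynomial rather than exponential in $|z|$, which could matter if one wanted uniformity over a family of special good vectors with growing $|z|$, but is not needed for the lemma as stated. Your closing remark that the overlap bound ``is where the power $N$ is spent'' slightly misreads the definition: $N$ is spent only in guaranteeing $|x|\geq|z|$ so that the dichotomy of the first bullet is clean.
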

\begin{proof}
Choose~$C$ and~$z$ so that~\eqref{eq:sgv} holds.  Choose $N=|z|$.  For
the first assertion, note that if $x\notin\tilde\Gamma(z^{-1})$, then
\begin{multline*}
\pi(ax)(v\otimes\bar v)\pi(ax)^{-1}
\leq C\pi(ax)\mu(\one_z)\pi(ax)^{-1} \\
=C\mu(\lambda(ax)\one_z)
\leq C\mu(\one_a)=CF_a
\end{multline*}
and contrariwise if $x\in\tilde\Gamma(z^{-1})$, then
\begin{multline*}
\pi(ax)(v\otimes\bar v)\pi(ax)^{-1}
\leq C\pi(ax)\mu(\one-\one_z)\pi(ax)^{-1} \\
=C\mu(\lambda(ax)(\one-\one_z))
\leq C\mu(\one_a)=CF_a
  \period
\end{multline*}
The second assertion now follows from Lemma~\ref{potenzaT} since,
putting $E_a=v\otimes\bar v$ for each~$a\in A$,
\begin{equation*}
(\mcT^{N+1} E)_a=
  \sum_{\substack{
      \text{$x\in\Gamma$, $b\in A$} \\
      \text{$|ax|=1+|x|=N+1$, $x\notin\tilde\Gamma(b^{-1})$}}}
\pi(ax)(v\otimes\bar v)\pi(ax)^{-1}\leq CF_a
\end{equation*}
for a new value of~$C$.
\end{proof}

\begin{lemma}\label{l:sgvinv}
The set of special good vectors is stable under $\pi(\Gamma)$.
\end{lemma}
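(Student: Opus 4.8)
The plan is to combine the covariance relation~\eqref{muCondE} with the elementary identity $\pi(y)(v\otimes\bar v)\pi(y)^{-1}=(\pi(y)v)\otimes\overline{(\pi(y)v)}$. If $v$ is a special good vector, fix $z$ and $C$ as in~\eqref{eq:sgv}; conjugating both inequalities of~\eqref{eq:sgv} by the unitary $\pi(y)$ (which preserves operator inequalities) and applying the covariance of $\mu$ turns them into
\begin{equation*}
  (\pi(y)v)\otimes\overline{(\pi(y)v)}\leq C\,\mu(\lambda(y)\one_z)
  \qquad\text{and}\qquad
  (\pi(y)v)\otimes\overline{(\pi(y)v)}\leq C\,\mu\bigl(\one-\lambda(y)\one_z\bigr)
  \period
\end{equation*}
So everything reduces to identifying the function $\lambda(y)\one_z$.

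The key point, which I would isolate as a small claim, is that for every $y,z\in\Gamma$ the function $\lambda(y)\one_z$ equals either $\one_{z'}$ or $\one-\one_{z'}$ for some $z'\in\Gamma$. I would prove this by observing that the set $S=\{\one_w\st w\in\Gamma\}\cup\{\one-\one_w\st w\in\Gamma\}$ --- which contains $0$ and $\one$ (take $w=e$) --- is carried into itself by each $\lambda(a)$ with $a\in A$, and is therefore invariant under every $\lambda(y)$ since the $\lambda(a)$ generate. Verifying invariance under a single generator $a$ is the computational core: writing $z$ as a reduced word, one checks that $\lambda(a)\one_z$ equals $\one_{az}$ when $z$ does not begin with $a^{-1}$, equals $\one-\one_a$ when $z=a^{-1}$, and equals $\one_{z'}$ when $z=a^{-1}z'$ with $|z'|\geq 1$; the co-cylinders are then handled by $\lambda(a)(\one-\one_z)=\one-\lambda(a)\one_z$.

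With the claim in hand the conclusion is immediate. If $\lambda(y)\one_z=\one_{z'}$, the two displayed inequalities say exactly that $\pi(y)v$ is special good with parameter $z'$ and constant $C$; if $\lambda(y)\one_z=\one-\one_{z'}$, then $\one-\lambda(y)\one_z=\one_{z'}$ and the same two inequalities give the conclusion with the two bounds interchanged. (The degenerate case $z=e$ forces $v=0$ by the remark after~\eqref{eq:sgv}, and the null vector is trivially special good --- take $z'$ to be any generator; when $z\neq e$ the resulting $z'$ is automatically $\neq e$, since $\lambda(y)$ is an automorphism and hence sends $\one_z\notin\{0,\one\}$ to something in $\{0,\one\}$'s complement.) I do not anticipate any real obstacle here; the only point needing care is the reduced-word bookkeeping in the invariance check for $S$ --- in particular, noticing that left translation by $a$ of a cylinder based at a word beginning with $a^{-1}$ either deletes that leading letter or, in the extreme case $z=a^{-1}$, produces the complement $\bGG\setminus\bGG(a)$.
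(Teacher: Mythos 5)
Your proposal is correct and takes essentially the same route as the paper: conjugate the two inequalities of~\eqref{eq:sgv} by the unitary $\pi(y)$, use covariance of~$\mu$, and observe that $\lambda(y)$ carries the pair $\{\one_z,\one-\one_z\}$ to another pair of the same form $\{\one_{z'},\one-\one_{z'}\}$. The only (cosmetic) difference is in how that last combinatorial fact is justified: you verify invariance of the family $\{\one_w\}\cup\{\one-\one_w\}$ one generator at a time, while the paper writes down the translated pair globally in two cases according to whether $y$ ends with $z^{-1}$, appealing to a picture of the tree.
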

\begin{proof} If~$v$ satisfies~\eqref{eq:sgv} then $\pi(x)v$ satisfies
\begin{equation*}
\begin{aligned}
(\pi(x)v)\otimes\overline{(\pi(x)v)}
&=\pi(x)(v\otimes\bar v)\pi(x^{-1}) \\
&\leq C\pi(x)\mu(\one_z)\pi(x^{-1})
= C\mu(\lambda(x)\one_z)\;,
 \\
(\pi(x)v)\otimes\overline{(\pi(x)v)}
&=\pi(x)(v\otimes\bar v)\pi(x^{-1}) \\
&\leq C\pi(x)\mu(\one-\one_z)\pi(x^{-1})
= C\mu(\lambda(x)(\one-\one_z))\period
\end{aligned}
\end{equation*}
Now observe that the pair $\{\one_z,\one-\one_z\}$ is translated
by~$\lambda(x)$ to another such pair.  If
$x\notin\tilde\Gamma(z^{-1})$ then the translated pair is
$\{\one_{xz},\one-\one_{xz}\}$; if $x\in\tilde\Gamma(z^{-1})$ and if
$z=wa$ with $|z|=|w|+1$, then the translated pair is
$\{\one-\one_{xw},\one_{xw}\}$. This is easiest to understand by drawing
diagrams of the tree which is the Cayley graph of~$\Gamma$.
\end{proof}

\begin{lemma}\label{l:est}
Let $Q\in\mcB(\mcH)$ be a nonnegative operator and let
$u\in\mcH$.   Then
\begin{equation*}
(Q^{1/2}u)\otimes\overline{(Q^{1/2}u)}
  \leq \|{u}\|^2Q
  \period
\end{equation*}
\end{lemma}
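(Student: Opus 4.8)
The plan is to verify the operator inequality by testing both sides against an arbitrary vector $w\in\mcH$ and invoking the Cauchy--Schwarz inequality. Recall that $Q^{1/2}$ denotes the unique nonnegative square root of $Q$; in particular it is self-adjoint, and $\langle Qw,w\rangle=\langle Q^{1/2}w,Q^{1/2}w\rangle=\norm{Q^{1/2}w}^2$ for every $w\in\mcH$.

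First I would use the definition of the rank-one operator together with the self-adjointness of $Q^{1/2}$ to compute
\begin{equation*}
  \langle\bigl((Q^{1/2}u)\otimes\overline{(Q^{1/2}u)}\bigr)w,w\rangle
  =|\langle w,Q^{1/2}u\rangle|^2
  =|\langle Q^{1/2}w,u\rangle|^2
  \period
\end{equation*}
Then Cauchy--Schwarz in $\mcH$ bounds the right-hand side by $\norm{Q^{1/2}w}^2\norm{u}^2$, which equals $\norm{u}^2\langle Qw,w\rangle$. Since $w$ was arbitrary, this is precisely the claimed inequality $(Q^{1/2}u)\otimes\overline{(Q^{1/2}u)}\leq\norm{u}^2Q$.

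There is no genuine obstacle here: the statement is essentially a one-line consequence of Cauchy--Schwarz. The only point needing a little care is the bookkeeping around the square root --- using that $Q^{1/2}$ is self-adjoint to move it from $u$ onto $w$, and the identity $\norm{Q^{1/2}w}^2=\langle Qw,w\rangle$ to recognize the final bound as $\norm{u}^2Q$ rather than something involving $Q^{1/2}$.
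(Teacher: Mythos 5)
Your proof is correct and is essentially identical to the one in the paper: both evaluate the rank-one operator against an arbitrary $w$, move $Q^{1/2}$ across the inner product by self-adjointness, and apply Cauchy--Schwarz together with $\norm{Q^{1/2}w}^2=\langle Qw,w\rangle$. No issues.
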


\begin{proof}
Let $w\in\mcH$.  Then
\begin{multline*}
\langle{(Q^{1/2}u)\otimes\overline{(Q^{1/2}u)}w},{w}\rangle
  = |\langle{Q^{1/2}u},{w}\rangle|^2 \\
  = |\langle{u},{Q^{1/2}w}\rangle|^2
  \leq \|{u}\|^2 \|{Q^{1/2}w}\|^2
  = \|{u}\|^2 \langle{Qw},{w}\rangle
  \period
\end{multline*}
\end{proof}

\begin{lemma}\label{l:gsgv}
For any $u\in\mcH$ and $z\in\Gamma$,
\begin{equation*}
v=(\mu(\one_z)-\mu(\one_z)^2)^{1/2}u
\end{equation*}
is a special good vector.
\end{lemma}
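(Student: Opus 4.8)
The plan is to deduce the statement directly from Lemma~\ref{l:est}, applied with a well-chosen nonnegative operator. Set $P=\mu(\one_z)$. Since $(\iota,\mcH')$ is a boundary realization, Lemma~\ref{perfect-by-mu} gives $\mu(\one)=\ID$; as $\mu$ is a positive map and $\one_z\geq 0$, $\one-\one_z\geq 0$, we get $P\geq 0$ and $\ID-P=\mu(\one-\one_z)\geq 0$, so $0\leq P\leq\ID$. Put $Q=P-P^2=P(\ID-P)$. The operators $P$ and $\ID-P$ commute and are both nonnegative, hence $Q\geq 0$, so $Q^{1/2}$ makes sense and $v=Q^{1/2}u$.

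Next I would record two comparisons obtained from the functional calculus of the commuting pair $\{P,\ID-P\}$. From $P^2\geq 0$ we get $Q=P-P^2\leq P=\mu(\one_z)$. Writing $Q$ symmetrically as $Q=(\ID-P)P=(\ID-P)-(\ID-P)^2$ and using $(\ID-P)^2\geq 0$, we get $Q\leq\ID-P=\mu(\one-\one_z)$.

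Finally, apply Lemma~\ref{l:est} to $Q$ and $u$: since $v=Q^{1/2}u$,
\[
  v\otimes\bar v=(Q^{1/2}u)\otimes\overline{(Q^{1/2}u)}\leq\|u\|^2\,Q.
\]
Chaining this with the two comparisons above yields $v\otimes\bar v\leq\|u\|^2\mu(\one_z)$ and $v\otimes\bar v\leq\|u\|^2\mu(\one-\one_z)$, which is precisely~\eqref{eq:sgv} with $C=\|u\|^2$ and the same $z$. Hence $v$ is a special good vector.

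No step here is genuinely hard. The only points needing a moment's attention are the identity $P-P^2=(\ID-P)P$, which produces the second comparison, and the equality $\mu(\one-\one_z)=\ID-\mu(\one_z)$, which is where the hypothesis that $\iota$ is a boundary realization (so $\mu(\one)=\ID$) enters; everything else is spectral calculus together with a single invocation of Lemma~\ref{l:est}.
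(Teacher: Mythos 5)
Your proof is correct and follows essentially the same route as the paper: both take $Q=\mu(\one_z)-\mu(\one_z)^2$, apply Lemma~\ref{l:est} to get $v\otimes\bar v\leq\|u\|^2 Q$, and then bound $Q$ by $\mu(\one_z)$ and by $\mu(\one-\one_z)$ using the identity $Q=(\ID-\mu(\one_z))-(\ID-\mu(\one_z))^2$ together with $\mu(\one)=\ID$. Nothing further is needed.
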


\begin{proof} Note that
$\mu(\one_z)+\mu(\one-\one_z)=\mu(\one)=\ID$, hence
\begin{equation*}
  0\leq\mu(\one_z)\leq\ID
  \comma
    \qquad\qquad
  0\leq\mu(\one-\one_z)\leq\ID
  \period  
\end{equation*}
By Lemma~\ref{l:est} we have $v\otimes\bar
v\leq\|{u}\|^2(\mu(\one_z)-\mu(\one_z)^2)$.  
Now use
\begin{align*}
  \mu(\one_z)-\mu(\one_z)^2 &\leq\mu(\one_z)
    \comma \\
\mu(\one_z)-\mu(\one_z)^2
  &=(\ID-\mu(\one_z))-(\ID-\mu(\one_z))^2 \\
  &=\mu(\one-\one_z)-\mu(\one-\one_z)^2
\leq\mu(\one-\one_z)
\period
\end{align*}
\end{proof}

\begin{definition} Let $\mcH_B\subseteq\mcH$ consist of those vectors
which are orthogonal to all special good vectors and let
$\mcH_G=\mcH\ominus\mcH_B$. Thus $\mcH_G$~is the closure of the linear
span of the special good vectors.
\end{definition}

\begin{proposition}\label{gv-dense}
  $\mcH_G$ contains a dense linear subspace made up of good vectors.
\end{proposition}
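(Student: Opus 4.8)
The plan is to take the desired dense subspace to be $V$, the linear span of the special good vectors, and simply to verify that every element of $V$ is a good vector.

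By the very definitions of $\mcH_B$ (the vectors orthogonal to all special good vectors) and of $\mcH_G=\mcH\ominus\mcH_B$, the subspace $\mcH_G$ is the closure of $V$; in particular $V$ is a dense linear subspace of $\mcH_G$. So the only thing left to check is that $V$ consists of good vectors. For this, first apply Lemma~\ref{sgv-is-gv}: every special good vector is, in particular, a good vector. Then apply Lemma~\ref{gv-linear}: the good vectors form a linear subspace of $\mcH$. Hence any finite linear combination of special good vectors, that is, any element of $V$, is again a good vector, and $V$ is the required dense subspace.

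There is no real obstacle here: the statement is a formal consequence of Lemmas~\ref{sgv-is-gv} and~\ref{gv-linear} together with the definition of $\mcH_G$. All the substance has been absorbed into the earlier lemmas of this section --- the construction of a supply of special good vectors via Lemmas~\ref{l:est} and~\ref{l:gsgv}, and the fact (Lemma~\ref{sgv-is-gv}) that such vectors are good. If desired, one could additionally record that $V$, and hence $\mcH_G$, is $\pi(\Gamma)$-invariant, using Lemma~\ref{l:sgvinv} (equivalently Lemma~\ref{l:good-stable}); this invariance is what makes $\mcH_G$ useful in the proofs of the Duplicity and Oddity Theorems, though it is not needed for the statement proved here.
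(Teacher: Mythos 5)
Your proof is correct and is essentially the paper's own argument: the paper also takes the linear span of the special good vectors (whose closure is $\mcH_G$ by definition) and cites Lemmas~\ref{sgv-is-gv} and~\ref{gv-linear} to conclude it consists of good vectors. No differences worth noting.
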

\begin{proof}
This follows from Lemmas~\ref{sgv-is-gv} and~\ref{gv-linear}.
\end{proof}

\begin{proposition}\label{l:hb}\ 

\begin{enumerate}
\item $\mcH_B$ is a closed linear subspace.
\item $\mcH_B$ is invariant under~$\pi(\Gamma)$. 
\item $w\in\mcH$ belongs to $\mcH_B$ if and only if
$\mu(\one_z)w=\mu(\one_z)^2w$ for all $z\in\Gamma$.
\item For $w\in\mcH_B$,
$\mu(\one_y)\mu(\one_z)w=0$ whenever $\bGG(y)$ and~$\bGG(z)$
are disjoint.
\item For $w\in\mcH_B$, $\mu(G_1)\mu(G_2)w=\mu(G_1G_2)w$ for
$G_1$, $G_2\in C(\bGG)$.
\item $\mcH_B$ is stable under the action of~$\mu(C(\bGG))$.
\end{enumerate}
\end{proposition}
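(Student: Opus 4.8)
The plan is to establish the six assertions in order, with~(3) as the hinge: once it is in hand, the projection-like relation $\mu(\one_z)w=\mu(\one_z)^2w$, valid for every $w\in\mcH_B$ and $z\in\Gamma$, does all the remaining work. Assertion~(1) is immediate, since $\mcH_B$ is by definition the orthogonal complement in~$\mcH$ of the set of special good vectors, and such complements are always closed subspaces. For~(2), Lemma~\ref{l:sgvinv} says that set is $\pi(\Gamma)$-stable, hence so is its closed linear span $\mcH_G$, and since each $\pi(x)$ is unitary, $\mcH_B=\mcH_G^\perp$ is $\pi(\Gamma)$-invariant as well.

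For~(3), note first that since $\iota$ is a boundary realization, $\mu(\one)=\ID$ by Lemma~\ref{perfect-by-mu}, so $0\leq\mu(\one_z)\leq\ID$ and thus $\mu(\one_z)-\mu(\one_z)^2\geq 0$ for every $z$. If $w\in\mcH_B$, then $w$ is orthogonal to the special good vectors $(\mu(\one_z)-\mu(\one_z)^2)^{1/2}u$ supplied by Lemma~\ref{l:gsgv}, for all $u\in\mcH$; as the square root is self-adjoint this forces $(\mu(\one_z)-\mu(\one_z)^2)^{1/2}w=0$, i.e.\ $\mu(\one_z)w=\mu(\one_z)^2w$. Conversely, suppose $w$ satisfies these relations for all $z$, and let $v$ be any special good vector, with associated $z$ and $C>0$ for which $v\otimes\bar v\leq C\mu(\one_z)$ and $v\otimes\bar v\leq C\mu(\one-\one_z)$. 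Put $P=\mu(\one_z)$ and split $w=Pw+(\ID-P)w$. From $Pw=P^2w$ one gets $(\ID-P)(Pw)=0$ and $P((\ID-P)w)=0$; substituting $\xi=Pw$ into $|\langle v,\xi\rangle|^2\leq C\langle(\ID-P)\xi,\xi\rangle$ and $\xi=(\ID-P)w$ into $|\langle v,\xi\rangle|^2\leq C\langle P\xi,\xi\rangle$ gives $\langle v,Pw\rangle=\langle v,(\ID-P)w\rangle=0$, hence $\langle v,w\rangle=0$. Thus $w\in\mcH_B$.

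Assertions~(4)--(6) then reduce to bookkeeping with positive operators. For~(4), disjointness of $\bGG(y)$ and $\bGG(z)$ gives $\one_y\leq\one-\one_z$ in $C(\bGG)$, hence $\mu(\one_y)\leq\ID-\mu(\one_z)$; with $\xi=\mu(\one_z)w$ one has $\mu(\one_z)\xi=\mu(\one_z)^2w=\xi$ by~(3), so $0\leq\langle\mu(\one_y)\xi,\xi\rangle\leq\langle(\ID-\mu(\one_z))\xi,\xi\rangle=0$, which forces $\mu(\one_y)\mu(\one_z)w=\mu(\one_y)\xi=0$. For~(5), both sides depend continuously on $G_1,G_2$ by Lemma~\ref{PosIsBdded}, so it suffices to treat $G_1,G_2$ in the dense subalgebra $C^\infty(\bGG)$; choose $n\geq 1$ so that both depend only on the first $n$ letters and write $G_i=\sum_{|x|=n}G_i(x)\one_x$, the sum over reduced words $x$ of length $n$. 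Since the cylinders $\bGG(x)$ with $|x|=n$ are pairwise disjoint, part~(4) annihilates the terms $\mu(\one_x)\mu(\one_{x'})w$ with $x\neq x'$, while part~(3) gives $\mu(\one_x)^2w=\mu(\one_x)w$, so $\mu(G_1)\mu(G_2)w=\sum_{|x|=n}G_1(x)G_2(x)\mu(\one_x)w=\mu(G_1G_2)w$. For~(6), given $w\in\mcH_B$, $G\in C(\bGG)$ and $z\in\Gamma$, part~(5) yields $\mu(\one_z)\mu(G)w=\mu(\one_z G)w$ and $\mu(\one_z)^2\mu(G)w=\mu(\one_z)\mu(\one_z G)w=\mu(\one_z^2 G)w=\mu(\one_z G)w$, so $\mu(G)w$ meets the criterion of~(3) and lies in $\mcH_B$.

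The one genuinely delicate step is~(3): identifying the abstract orthogonality condition defining $\mcH_B$ with the concrete idempotence $\mu(\one_z)w=\mu(\one_z)^2w$. The forward implication depends on having a rich enough supply of special good vectors, namely Lemma~\ref{l:gsgv}; the converse depends essentially on the \emph{two-sided} domination built into the definition of a special good vector, $v\otimes\bar v\leq C\mu(\one_z)$ together with $v\otimes\bar v\leq C\mu(\one-\one_z)$. Everything after~(3) is routine manipulation of positive semidefinite operators.
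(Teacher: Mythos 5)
Your proof is correct and follows essentially the same route as the paper's: assertion~(3) is established exactly as in the original (forward direction via Lemma~\ref{l:gsgv}, converse via the two-sided bounds in the definition of a special good vector), and assertions~(4)--(6) are then deduced by the same positivity and density arguments. The only differences are cosmetic, such as your explicit substitution of $\xi=\mu(\one_z)w$ where the paper computes directly with $\mu(\one_y)^{1/2}$.
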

\begin{proof}
The first assertion is trivial.  The second assertion follows from
Lemma~\ref{l:sgvinv}.  For the third assertion, first suppose that
$w\in\mcH_B$.  Then by Lemma~\ref{l:gsgv} $\langle
w,(\mu(\one_z)-\mu(\one_z)^2)^{1/2}u\rangle=0$ for any
$u\in\mcH$, hence $(\mu(\one_z)-\mu(\one_z)^2)^{1/2}w=0$, hence
$(\mu(\one_z)-\mu(\one_z)^2)w=0$.

Conversely, suppose that $\mu(\one_z)w=\mu(\one_z)^2w$ for
all~$z\in\Gamma$ and suppose that $v$~is a special good vector
satisfying~\eqref{eq:sgv} for some particular $z\in\Gamma$.
Then $\langle v,w\rangle=\langle v,\mu(\one_z)w\rangle +\langle
v,\mu(\one-\one_z)w\rangle$ and
\begin{align*}
|\langle v,\mu(\one_z)w\rangle|^2
&=\langle (v\otimes\bar v)\mu(\one_z)w,\mu(\one_z)w\rangle \\
&\leq C\langle\mu(\one-\one_z)\mu(\one_z)w,\mu(\one_z)w\rangle
  \comma \\
|\langle v,\mu(\one-\one_z)w\rangle|^2
&=\langle (v\otimes\bar v)\mu(\one-\one_z)w,\mu(\one-\one_z)w\rangle \\
&\leq C\langle\mu(\one_z)\mu(\one-\one_z)w,\mu(\one-\one_z)w\rangle
\period
\end{align*}
Now use
$\mu(\one-\one_z)\mu(\one_z)w=(\mu(\one_z)-\mu(\one_z)^2)w=0$
in both terms.

In the fourth assertion, we assume that $\bGG(y)$ and~$\bGG(z)$
are disjoint, hence that $\one_y\leq\one-\one_z$.
\begin{align*}
\langle \mu(\one_y)^{1/2}\mu(\one_z)w,
  \mu(\one_y)^{1/2}\mu(\one_z)w \rangle
&=\langle \mu(\one_y)\mu(\one_z)w, \mu(\one_z)w \rangle \\
&\leq \langle \mu(\one-\one_z)\mu(\one_z)w,
  \mu(\one_z)w \rangle
  =0
  \period
\end{align*}
Hence $\mu(\one_y)^{1/2}\mu(\one_z)w=0$, hence $\mu(\one_y)\mu(\one_z)w=0$.

Let $w\in\mcH_B$.  Suppose that for some $n>0$ each of $G_1$, $G_2\in
C(\bGG)$ is of the form $\sum_{|z|=n} c_z\one_z$.  Then
$\mu(G_1)\mu(G_2)w=\mu(G_1G_2)w$ follows from the third and
fourth assertions and linearity.  Taking limits, we see that this
formula is valid for arbitrary $G_1$, $G_2\in C(\bGG)$.

Finally, let $w\in\mcH_B$ and $G\in C(\bGG)$.  For any $z\in\Gamma$
\begin{equation*}
\mu(\one_z)^2\mu(G)w
=\mu(\one_z)\mu(\one_zG)w
=\mu(\one_zG)w=\mu(\one_z)\mu(G)w
\end{equation*}
and according to the criterion from the third assertion, this shows
that $\mu(G)w\in\mcH_B$.
\end{proof}

\begin{definition} 
Let~$\mcH'_B$ and~$\mcH'_G$~be the closures of
$\pi'(C(\bGG))\iota(\mcH_B)$ and $\pi'(C(\bGG))\iota(\mcH_G)$)
in~$\mcH'$.  By Proposition~\ref{l:hb} $\mcH_B$ and~$\mcH_G$ are stable
under~$\pi(\Gamma)$ and consequently each of~$\mcH'_B$ and~$\mcH'_G$ is
stable under $\pi'(\Gamma\ltimes C(\bGG))$.  Let $\pi_B$ denote the
restriction of~$\pi$ to $\mcH_B$ and~$\pi_G$ the restriction to
$\mcH_G$.  Likewise let $\pi'_B$~be the restriction of~$\pi'$ to
$\mcH'_B$ and~$\pi'_G$ the restriction to $\mcH'_G$. Let
$\iota_B:\mcH_B\to\mcH_B'$ and $\iota_G:\mcH_G\to\mcH_G'$ be the
respective restrictions of~$\iota$.  They are boundary realizations
of~$\mcH_B$ and~$\mcH_G$ respectively.
\end{definition}

\begin{corollary} $\mcH'=\mcH'_B\oplus\mcH'_G$.
\end{corollary}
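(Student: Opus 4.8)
The plan is to establish two facts: first, that $\mcH'_B$ and $\mcH'_G$ are orthogonal subspaces of $\mcH'$; and second, that $\mcH'_B+\mcH'_G$ is dense in $\mcH'$. Granting these, $\mcH'_B\oplus\mcH'_G$ is a closed subspace of $\mcH'$ (an orthogonal sum of two closed subspaces is closed) which is also dense, hence equals $\mcH'$, which is the assertion.

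For density, recall that by definition $\mcH=\mcH_B\oplus\mcH_G$, so every $v\in\mcH$ decomposes as $v=v_B+v_G$ with $v_B\in\mcH_B$ and $v_G\in\mcH_G$, whence $\iota(v)=\iota(v_B)+\iota(v_G)\in\iota(\mcH_B)+\iota(\mcH_G)$. Applying $\pi'(G)$ for $G\in C(\bGG)$ and using linearity, $\pi'(G)\iota(\mcH)\subseteq\pi'(C(\bGG))\iota(\mcH_B)+\pi'(C(\bGG))\iota(\mcH_G)\subseteq\mcH'_B+\mcH'_G$. Since $\iota(\mcH)$ is cyclic for $\pi'(C(\bGG))$ in $\mcH'$, it follows that $\mcH'_B+\mcH'_G$ is dense.

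For orthogonality, by bilinearity and continuity of the inner product it suffices to show $\langle\pi'(G_1)\iota(w_B),\pi'(G_2)\iota(w_G)\rangle=0$ for $w_B\in\mcH_B$, $w_G\in\mcH_G$, and $G_1,G_2\in C(\bGG)$. Using that $\pi'$ is a $*$-representation of $C(\bGG)$, that $\iota^*\pi'(\cdot)\iota=\mu$, and that $\mu(G)^*=\mu(\overline G)$, one rewrites this inner product successively as
$\langle\iota(w_B),\pi'(\overline{G_1}G_2)\iota(w_G)\rangle
 =\langle w_B,\mu(\overline{G_1}G_2)w_G\rangle
 =\langle\mu(G_1\overline{G_2})w_B,w_G\rangle$.
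Now $\mu(G_1\overline{G_2})w_B$ lies in $\mcH_B$ by Proposition~\ref{l:hb}, part~(6), while $w_G\in\mcH_G=\mcH\ominus\mcH_B$; hence the inner product vanishes, and $\mcH'_B\perp\mcH'_G$.

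The only substantive input is the $\mu(C(\bGG))$-stability of $\mcH_B$, i.e.\ Proposition~\ref{l:hb}(6), which was obtained through the special-good-vector analysis; everything else here is formal Hilbert-space bookkeeping, so there is no real obstacle. The one point to watch is the reduction: making sure that $\mcH'_B+\mcH'_G$ genuinely contains all of $\pi'(C(\bGG))\iota(\mcH)$, so that cyclicity applies, and that the orthogonality check may legitimately be checked only on the spanning vectors $\pi'(G)\iota(w)$.
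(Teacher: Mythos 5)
Your proof is correct and follows essentially the same route as the paper's: orthogonality of $\mcH'_B$ and $\mcH'_G$ is reduced via $\iota^*\pi'(\cdot)\iota=\mu$ to the $\mu(C(\bGG))$-stability of $\mcH_B$ (Proposition~\ref{l:hb}, last assertion), and totality follows from cyclicity of $\iota(\mcH)=\iota(\mcH_B)+\iota(\mcH_G)$. Your write-up just spells out the adjoint manipulations that the paper compresses into one displayed line.
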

\begin{proof}
\begin{multline*}
\langle \pi'(C(\bGG))\iota(\mcH_B),
   \pi'(C(\bGG))\iota(\mcH_G) \rangle \\
=\langle \iota^*\pi'(C(\bGG))\iota \mcH_B, \mcH_G \rangle
=\langle \mu(C(\bGG))\mcH_B, \mcH_G \rangle
=\langle \mcH_B,\mcH_G \rangle=0
\end{multline*}
using the last assertion of Proposition~\ref{l:hb}.  This shows that
$\mcH'_B$ and~$\mcH'_G$ are perpendicular.  Consequently, their direct
sum is a closed subspace of~$\mcH'$.  That sum contains
$\pi'(C(\bGG))\iota(\mcH)$ and is consequently total.
\end{proof}

\begin{corollary} \label{gvd} $(\iota_B,\mcH'_B)$  is a perfect
  realization of~$\mcH_B$.  Consequently $\mcH'_B=\iota_B(\mcH_B)$.
\end{corollary}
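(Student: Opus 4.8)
The plan is to identify the mu-map associated to the boundary realization $(\iota_B,\mcH'_B)$ of $\mcH_B$ and show it is an algebra homomorphism, whereupon Lemma~\ref{perfect-by-mu} gives that $\iota_B$ is perfect. The point is that essentially all the real work has already been done in Proposition~\ref{l:hb}; what remains is bookkeeping.

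First I would observe that, since $\mcH'_B$ is by definition the closure of $\pi'(C(\bGG))\iota(\mcH_B)$ and $\mcH_B$ is $\pi(\Gamma)$-invariant (second assertion of Proposition~\ref{l:hb}), the subspace $\mcH'_B$ is invariant under all of $\pi'(\cp)$; in particular $\pi'_B(G)$ is just the restriction of $\pi'(G)$ to $\mcH'_B$. Writing $\mu_B(G)=\iota_B^*\pi'_B(G)\iota_B$ for the mu-map of $\iota_B$ and unwinding the adjoint, one finds that for $w,w'\in\mcH_B$ the vector $\iota_B w=\iota w$ lies in $\mcH'_B$, so $\pi'_B(G)\iota_B w=\pi'(G)\iota w$, and hence $\langle\iota_B^*\pi'(G)\iota w,w'\rangle=\langle\pi'(G)\iota w,\iota w'\rangle=\langle\mu(G)w,w'\rangle$. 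By the last assertion of Proposition~\ref{l:hb}, $\mu(G)w\in\mcH_B$; since $\iota_B^*\pi'(G)\iota w\in\mcH_B$ as well, the two agree, i.e. $\mu_B(G)w=\mu(G)w$. Thus $\mu_B$ is simply $\mu$ restricted to $\mcH_B$, now regarded as taking values in $\mcB(\mcH_B)$, with no residual projection onto $\mcH_B$ — and it is precisely here that the stability statement (the last assertion of Proposition~\ref{l:hb}) is used.

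Next I would invoke the fifth assertion of Proposition~\ref{l:hb}: for $w\in\mcH_B$ and $G_1,G_2\in C(\bGG)$ one has $\mu(G_1)\mu(G_2)w=\mu(G_1G_2)w$, so $\mu_B(G_1G_2)=\mu_B(G_1)\mu_B(G_2)$ as operators on $\mcH_B$. Hence $\mu_B$ is an algebra homomorphism, and since $(\iota_B,\mcH'_B)$ is a boundary realization of $\mcH_B$ (as noted in the preceding definition), Lemma~\ref{perfect-by-mu} shows that $(\iota_B,\mcH'_B)$ is a \emph{perfect} boundary realization. A perfect boundary realization is by definition a unitary equivalence, so $\iota_B$ is onto, giving $\iota_B(\mcH_B)=\mcH'_B$. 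There is no serious obstacle; the only thing to watch is the identification $\mu_B=\mu|_{\mcH_B}$ in the previous step, which is exactly why Proposition~\ref{l:hb} was proved in the form it was.
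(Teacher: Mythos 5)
Your proof is correct and follows essentially the same route as the paper, whose own proof is the one-line instruction to combine the fifth assertion of Proposition~\ref{l:hb} with Lemma~\ref{perfect-by-mu}. You have merely made explicit the bookkeeping (identifying $\mu_B$ with $\mu|_{\mcH_B}$ via the sixth assertion) that the paper leaves to the reader.
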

\begin{proof}
  Use the fifth assertion of Proposition~\ref{l:hb} and
  Lemma~\ref{perfect-by-mu}.
\end{proof}

We never use this last lemma, but it rounds out the
picture.
\begin{lemma} All good vectors lie in~$\mcH_G$.
\end{lemma}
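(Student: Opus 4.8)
The claim is that every good vector lies in $\mcH_G$, equivalently that no nonzero good vector can be orthogonal to all special good vectors, i.e.\ no nonzero good vector lies in $\mcH_B$. So the plan is to suppose $v$ is a good vector with $v\in\mcH_B$ and derive $v=0$. The natural tool is Corollary~\ref{gvd}: the restricted realization $(\iota_B,\mcH'_B)$ is \emph{perfect}. Perfectness means, by Lemma~\ref{F-perfect} applied to $\iota_B$, that the operators $(F_B)_a=\iota_B^*\pi'(\one_a)\iota_B$ are mutually orthogonal projections summing to $\ID_{\mcH_B}$; equivalently, by Proposition~\ref{(F,F)=0}, that $(F_B,F_B)=0$. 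Meanwhile, the good-vector hypothesis says $\mcT^N E\leq CF$ where $E_a=v\otimes\bar v$, and — this is the point — when $v\in\mcH_B$ one should be able to push this inequality down to the restricted realization and get $\mcT^N E\leq C F_B$ with $F_B$ in place of $F$, because the extra mass of $F$ over $F_B$ lives on $\mcH'_G$ which is orthogonal to everything built from $\iota(\mcH_B)$.

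Concretely, the key steps in order: First, observe that since $\mcH_B$ is $\pi(\Gamma)$-invariant (Proposition~\ref{l:hb}(2)), for $v\in\mcH_B$ every operator $\pi(x)(v\otimes\bar v)\pi(x)^{-1}$ maps into $\mcH_B$ and vanishes on $\mcH_B^\perp$; hence $(\mcT^N E)_a\leq CF_a$ restricted to $\mcH_B$ reads $(\mcT^N E)_a\leq C\,P_B F_a P_B$ where $P_B$ is the projection onto $\mcH_B$. Second, identify $P_B F_a P_B$ with $(F_B)_a$ — this is exactly the Eff-vector of the restricted realization $\iota_B$, since $\iota_B=P_B\iota|_{\mcH_B}$ and $\iota_B^*\pi'(\one_a)\iota_B=P_B\iota^*\pi'(\one_a)\iota P_B$ because $\iota(\mcH_B)\subseteq\mcH'_B$ and $\pi'(\one_a)$ stabilizes $\mcH'_B$. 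So we get $\mcT^N E\leq C F_B$, i.e.\ $v$ is a good vector \emph{for the realization} $\iota_B$ of $\pi_B$. Third, apply Lemma~\ref{gvb} (or Remark~\ref{potenza-good-vector} plus Lemma~\ref{gvb}) in $\mcH_B$: for all $n\geq N$, $\sum_{|x|=n}\pi(x)v\otimes\overline{\pi(x)v}\leq C\ID_{\mcH_B}$. Fourth — and here is where perfectness bites — since $(F_B)_a$ are orthogonal projections, for a fixed large $n$ the operators $\pi(x)(F_B)_a\pi(x)^{-1}$ over the words of length $n$ starting with $a$ are themselves mutually orthogonal projections summing (over all $a$ and all such $x$) to $\ID_{\mcH_B}$; and the chain of inequalities $\pi(x)(v\otimes\bar v)\pi(x)^{-1}\leq C\,\pi(x)(F_B)_a\pi(x)^{-1}$ (which follows from $\mcT^n E\leq CF_B$ read off via Lemma~\ref{potenzaT}) pins each rank-one piece inside one of these orthogonal summands. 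One then estimates $\|v\|^2$ against a telescoping / orthogonality argument that forces $\|v\|^2\leq C/q^n\to 0$, giving $v=0$. (Alternatively: $v\otimes\bar v\leq C(F_B)_a$ for one specific $a$, and simultaneously, translating by a generator, $v\otimes\bar v\leq C(F_B)_b$ for $b\neq a$; but $(F_B)_a(F_B)_b=0$, so $v\otimes\bar v\leq C(F_B)_a(F_B)_b\cdot(\text{something})=0$ after the usual manipulation — this is morally the ``special good vector'' obstruction from Remark~\ref{perfect-no-gv}, now available because in $\mcH_B$ the realization is perfect, so $v$ would have to be both good and, after one translation step, caught between disjoint projections.)

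The cleanest route is probably the second alternative just sketched: show that a good vector $v\in\mcH_B$ is, after applying $\pi$ of a single generator, squeezed by \eqref{eq:sgv}-type inequalities between $\mu_B(\one_z)$ and $\mu_B(\one-\one_z)$ for the \emph{restricted} mu-map $\mu_B$, hence (by Lemma~\ref{l:sgvinv}, translating back) $v$ itself is a special good vector for $\iota_B$; but $\iota_B$ is perfect, so by Remark~\ref{perfect-no-gv} the only special good vector is $0$. To run this one needs: from $\mcT^{N+1}E\leq CF_B$ and Lemma~\ref{potenzaT}, for each word $ax$ with $|ax|=|x|+1$, $|x|\geq N$, one has $\pi(ax)(v\otimes\bar v)\pi(ax)^{-1}\leq C(F_B)_a$; fixing two distinct generators $a_1,a_2$ and a suitable $x$, translate back by $\pi((a_ix)^{-1})$ and use covariance of $\mu_B$ to write these as $v\otimes\bar v\leq C\mu_B(\one_{z_i})$ with $\bGG(z_1),\bGG(z_2)$ disjoint; since $\one_{z_2}\leq\one-\one_{z_1}$ this is precisely \eqref{eq:sgv} for $\iota_B$. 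Then Remark~\ref{perfect-no-gv} finishes it.

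The main obstacle — or rather the one genuinely non-bookkeeping point — is step two: cleanly establishing that for $v\in\mcH_B$ the inequality $\mcT^N E\leq CF$ in $\mcB(\mcH)$ descends to $\mcT^N E\leq C F_B$ in $\mcB(\mcH_B)$, i.e.\ that compressing to $\mcH_B$ replaces $F$ by the Eff-vector $F_B$ of the perfect sub-realization and does no harm to the left side. This rests on $\mcH_B$ being $\pi(\Gamma)$-invariant (so the left side lives entirely on $\mcH_B$) together with the fact that $\iota(\mcH_B)\subseteq\mcH'_B$ and $\mcH'_B$ is $\pi'$-invariant (so $P_B\iota^*\pi'(\one_a)\iota P_B=\iota_B^*\pi'(\one_a)\iota_B$), both of which are recorded in the Definition preceding Corollary~\ref{gvd} and in Proposition~\ref{l:hb}; once that identification is in hand, the rest is the already-established good-vector/special-good-vector machinery applied inside $\mcH_B$ plus Remark~\ref{perfect-no-gv}.
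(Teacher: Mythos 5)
Your overall strategy---compress everything to $\mcH_B$, identify the compressed Eff-vector with $F_B$, and use the perfectness of $\iota_B$ (Corollary~\ref{gvd}) to kill the offending vector---is the paper's own, and your steps one and two are correct as far as they go. But there are two genuine defects. First, the opening reduction is not an equivalence: ``every good vector lies in $\mcH_G$'' is strictly stronger than ``no nonzero good vector lies in $\mcH_B$,'' since a good vector $v=v_B+v_G$ could a priori have $v_B\neq 0$ without itself lying in $\mcH_B$. Your argument, which assumes $v\in\mcH_B$ from the outset, therefore does not prove the stated lemma. The missing step is exactly the paper's block-matrix computation: write $v=v_B+v_G$, compress the inequality $\mcT^N E\leq CF$ by the projection $P_B$ (which commutes with $\pi(\Gamma)$ and, by Proposition~\ref{l:hb}, with each $F_a$), and use $P_B(v\otimes\bar v)P_B=v_B\otimes\bar v_B$ to conclude that $v_B$ is good for $\iota_B$; only then does your machinery apply, and it applies to $v_B$ rather than to $v$.

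Second, your ``cleanest route'' to finish contains a direction error. For $y=ax$ reduced with first letter $a$, one computes $\lambda(y^{-1})\one_a=\one-\one_{y^{-1}}$, so translating $\pi(y)(v\otimes\bar v)\pi(y)^{-1}\leq C(F_B)_a$ back gives $v\otimes\bar v\leq C\mu_B(\one-\one_{y^{-1}})$, a bound by the \emph{complement} of a cylinder, not by a cylinder $\one_{z_i}$ as you claim. Consequently the pair of inequalities~\eqref{eq:sgv} is not obtained and Remark~\ref{perfect-no-gv} does not apply as stated. The conclusion is still within reach: since $\iota_B$ is perfect, $\mu_B(\one-\one_w)$ is a projection, and $v_B\otimes\bar v_B\leq C\mu_B(\one-\one_w)$ forces $\mu_B(\one_w)v_B=0$; letting $w$ run over all reduced words of length $N+1$ and summing, $v_B=\sum_w\mu_B(\one_w)v_B=0$. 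Your other sketched finish (the ``telescoping'' bound $\norm{v}^2\leq C/q^n$) is too vague to verify and is not needed once this projection argument is in place.
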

\begin{proof}[Sketch of proof]
  Suppose that $v=v_B+v_G\in\mcH$ with~$v_B\in\mcH_B$
  and~$v_G\in\mcH_G$. Let $E$~be the vector with
  \begin{equation*}
    E_a=v\otimes\bar v=(v_B+v_G)\otimes\overline{(v_B+v_G)}
    =\begin{pmat}
     v_B\otimes\bar v_B & v_B\otimes\bar v_G \\ 
     v_G\otimes\bar v_B & v_G\otimes\bar v_G
    \end{pmat}
    \period
  \end{equation*}
  Suppose $v$~is good for~$\iota$. This translates to $\mcT^n E\leq
  CF$. Calculate both sides as block matrices.  Looking at the upper
  left hand block shows that $v_B$~is good
  for~$\iota_B$. Corollary~\ref{gvd} says that $\iota_B$ is perfect,
  and so Remark~\ref{perfect-no-gv} says that $v_B=0$.
\end{proof}

\section{Main proofs}\label{main-proofs}

\begin{lemma} \label{NotPosL}
Assume that $\mu:C(\bGG)\to\mcB(\mcH)$ is a $^*$-map
satisfying $\pi(x)\mu(G)\pi(x^{-1})=\mu(\lambda(x)G)$.
 Suppose also that $\norm{\mu(G)}\leq
C\norm{G}_\infty$. If $\mu$ is \emph{not} a positive map, then there
is some~$a\in A$ such that $\mu(\one-\one_a)$ is \emph{not}
positive~semidefinite.
\end{lemma}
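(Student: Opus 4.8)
The plan is to prove the contrapositive: supposing that $\mu(\one-\one_a)$ is positive~semidefinite for \emph{every} $a\in A$, I will show that $\mu$ is a positive map, contradicting the hypothesis.

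The crux is a translation identity. For $c\in A$, the automorphism $\lambda(c)$ carries the indicator $\one_{c^{-1}}$ of $\bGG(c^{-1})$ to $\one-\one_c$: indeed $\lambda(c)\one_{c^{-1}}$ is the indicator of $c\cdot\bGG(c^{-1})$, and an infinite reduced word of the form $c\omega$ with $\omega$ starting in $c^{-1}$ is exactly an infinite reduced word not starting in $c$, so $c\cdot\bGG(c^{-1})=\bGG\setminus\bGG(c)$. Since also $\lambda(c)\one=\one$, the covariance hypothesis $\pi(c)\mu(G)\pi(c)^{-1}=\mu(\lambda(c)G)$ (with $G=\one-\one_{c^{-1}}$) gives
\[
  \mu(\one_c)=\mu\bigl(\one-\lambda(c)\one_{c^{-1}}\bigr)
   =\mu\bigl(\lambda(c)(\one-\one_{c^{-1}})\bigr)
   =\pi(c)\,\mu(\one-\one_{c^{-1}})\,\pi(c)^{-1}\period
\]
As $c^{-1}\in A$, the right-hand side is a unitary conjugate of a positive~semidefinite operator, so $\mu(\one_c)\ge0$ for every $c\in A$.

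Next I would bootstrap to all cylinders. For a nonempty reduced word $y=y'c$ with $|y|=|y'|+1$ and $c\in A$, covariance gives $\mu(\one_y)=\mu(\lambda(y')\one_c)=\pi(y')\mu(\one_c)\pi(y')^{-1}\ge0$; and $\mu(\one)=\sum_{a\in A}\mu(\one_a)\ge0$ covers $y=e$. Hence $\mu(\one_y)\ge0$ for all $y\in\Gamma$, and therefore for every locally constant $G\ge0$ one has $\mu(G)=\sum_{|x|=n}\bigl(G|_{\bGG(x)}\bigr)\mu(\one_x)\ge0$, where $n$ is large enough that $G$ depends only on the first $n$ letters and the sum runs over the partition of $\bGG$ into length-$n$ cylinders (the coefficients $G|_{\bGG(x)}$ being nonnegative scalars).

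Finally, for arbitrary $G\in C(\bGG)$ with $G\ge0$, pick for each $\epsilon>0$ a locally constant $\tilde G$ with $0\le\tilde G\le G$ and $\norm{G-\tilde G}_\infty\le\epsilon$ (take $\tilde G=\sum_{|x|=n}(\min_{\bGG(x)}G)\,\one_x$ with $n$ large). Then, using the bound $\norm{\mu(G)}\le C\norm{G}_\infty$ and self-adjointness of $\mu(G-\tilde G)$,
\[
  \mu(G)=\mu(\tilde G)+\mu(G-\tilde G)\ge -\norm{\mu(G-\tilde G)}\,\ID\ge -C\epsilon\,\ID\period
\]
Letting $\epsilon\to0$ yields $\mu(G)\ge0$, so $\mu$ is positive. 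The one genuinely clever step is the translation identity $\lambda(c)\one_{c^{-1}}=\one-\one_c$; the rest is routine bookkeeping with cylinder sets together with the standard density-and-continuity argument already used above (cf.\ the proof of Lemma~\ref{PosIsBdded}).
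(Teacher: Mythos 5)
Your proof is correct and is essentially the paper's argument run in the contrapositive direction: the key step in both is the covariance identity relating $\one_{c^{-1}}$ (or $\one_z$) to $\one-\one_c$ via $\lambda$, and the reduction "positive on all cylinders $\one_y$ iff positive" that you spell out with the cylinder-partition and density argument is exactly what the paper asserts as its opening observation. No gaps; the extra detail you supply is sound but not a different method.
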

\begin{proof}
Observe first that a norm-continuous $^*$-map
$\mu:C(\bGG)\to\mcB(\mcH)$ is positive if and only if $\mu(\one_z)$ is
positive for each $z\in\Gamma$.  Hence, if $\mu$ is not positive,
there exists $z\in\GG$ and $v\in\mcH$ such that
$\langle\mu(\one_z)v,v\rangle<0$.  Denote by $c$ the last letter of
$z$ and let $w=\pi(z^{-1})v$.  One has
\begin{multline*}
\langle\mu(\one_z)v,v\rangle=
\langle\pi(z^{-1})\mu(\one_z)\pi(z)w,w\rangle=\\
\langle\mu(\lambda(z^{-1})\one_z) w,w\rangle=
\langle\mu(\one-\one_{c^{-1}}) w,w\rangle<0
\end{multline*}
implying that $\mu(\one-\one_{c^{-1}})$ is not positive semidefinite.
\end{proof}

\subsection{Oddity} In this subsection, the reigning hypotheses are as
follows:
\begin{itemize}
\item $\pi:\Gamma\to\mcU(\mcH)$ is a unitary representation of~$\Gamma$.
\item $\pi':\cp\to\mcB(\mcH')$ is an irreducible representation of~$\cp$.
\item $\iota:\mcH\to\mcH'$ is an imperfect boundary realization
  of~$\pi$, i.e. $\iota$~is a~$\Gamma$-map which is an isometry but is
  \emph{not} a unitary isomorphism.
\item As per Section~\ref{imfS}, $\mu$ and~$F$ are associated to~$\iota$.
\item The~\eqref{FTC} holds for~$F$.
\end{itemize}

Let~$\mcH_1=\mcH$ and $\mcH_2=\mcH'\ominus\iota(\mcH)$;
let~$\iota_1=\iota$ and let $\iota_2:\mcH_2\to\mcH'$ be the inclusion
map.  This sets up the natural symmetry between~$\mcH=\mcH_1$
and~$\mcH_2$, with $\mcH'=\iota_1(\mcH_1)\oplus\iota_2(\mcH_2)$. Note
that $\mcH_2$ is not stable under~$\cp$, but it is stable
under~$\Gamma$; set~$\pi_1=\pi:\Gamma\to\mcU(\mcH_1)$ and
let~$\pi_2:\Gamma\to\mcU(\mcH_2)$ be the~$\Gamma$-action on~$\mcH_2$
obtained from the $\cp$~representation. The only difference between
the two direct summands is that we have assumed~\eqref{FTC}
for~$\iota_1$. This asymmetry is only apparent.

\begin{proposition}\label{odd-symm} Let $F_1=F$ and~$F_2$ be
  associated with~$\iota_1$ and~$\iota_2$ respectively.  Suppose that
  the block matrix for~$\pi'(\one_a)$ is given by
  \begin{equation*}
    \begin{pmatrix}
      \pi'_{11}  & \pi'_{12} \\
      \pi'_{21}  & \pi'_{22}
    \end{pmatrix}(\one_a) \period
  \end{equation*}
  Then
  \begin{equation*}
    (F_1,F_1) = \sum_a \norm{\pi'_{21}(\one_a)}_{HS}^2
              = \sum_a \norm{\pi'_{12}(\one_a)}_{HS}^2
              =(F_2,F_2) \period
  \end{equation*}
\end{proposition}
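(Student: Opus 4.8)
The plan is to unwind the definitions and reduce the entire statement to the single relation $Q_a^2=Q_a$ for the projections $Q_a:=\pi'(\one_a)$, read off in block form with respect to the decomposition $\mcH'=\iota_1(\mcH_1)\oplus\iota_2(\mcH_2)$.

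First I would record the elementary facts about $Q_a:=\pi'(\one_a)$. Since $\one_a^2=\one_a=\overline{\one_a}$, each $Q_a$ is a self-adjoint projection; since $\one_a\one_b=0$ for $a\neq b$, the $Q_a$ are mutually orthogonal; and since $\sum_{a\in A}\one_a=\one_{\bGG}$ while $\pi'(\one_{\bGG})=\ID_{\mcH'}$ (as $\one_\bGG$ is the unit of $C(\bGG)$, central in $\cp$, so its image is a projection onto a $\cp$-invariant subspace, which by irreducibility of $\pi'$ is all of $\mcH'$ — it is nonzero, a zero image forcing $\pi'=0$), one has $\sum_{a\in A}Q_a=\ID_{\mcH'}$. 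Writing $Q_a=\left(\begin{smallmatrix}A_a&B_a\\ B_a^*&D_a\end{smallmatrix}\right)$, the four blocks are precisely $\pi'_{11}(\one_a)$, $\pi'_{12}(\one_a)$, $\pi'_{21}(\one_a)$, $\pi'_{22}(\one_a)$, with $\pi'_{21}(\one_a)=\pi'_{12}(\one_a)^*$; and under the identifications $\mcH_j\cong\iota_j(\mcH_j)$ we have $(F_1)_a=A_a$ and $(F_2)_a=D_a$.

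Next I would extract the two identities that carry the argument. Comparing $(1,1)$ and $(2,2)$ blocks in $\sum_a Q_a=\ID_{\mcH'}$ gives $\sum_{b\neq a}A_b=\ID_{\mcH_1}-A_a$ and $\sum_{b\neq a}D_b=\ID_{\mcH_2}-D_a$ (the first is also immediate from Lemma~\ref{F-perfect} applied to the boundary realization $\iota_1$). Comparing $(1,1)$ and $(2,2)$ blocks in $Q_a^2=Q_a$ gives $A_a-A_a^2=B_aB_a^*$ and $D_a-D_a^2=B_a^*B_a$. Then the computation is short: using bilinearity of $\TR(\cdot,\cdot)$ in its second argument over the \emph{finite} sum $\sum_{b\neq a}A_b$, and that $\sqrt{A_a}$ commutes with $A_a$,
\begin{equation*}
  (F_1,F_1)=\sum_{a\neq b}\TR(A_a,A_b)
  =\sum_{a}\TR\bigl(A_a,\ID_{\mcH_1}-A_a\bigr)
  =\sum_a\tr(A_a-A_a^2)
  =\sum_a\tr(B_aB_a^*)
  =\sum_a\norm{\pi'_{12}(\one_a)}_{HS}^2 \period
\end{equation*}
The identical chain with $D_a$ in place of $A_a$ gives $(F_2,F_2)=\sum_a\tr(D_a-D_a^2)=\sum_a\tr(B_a^*B_a)=\sum_a\norm{\pi'_{21}(\one_a)}_{HS}^2$. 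Finally $\norm{\pi'_{12}(\one_a)}_{HS}=\norm{\pi'_{21}(\one_a)}_{HS}$, since these operators are mutual adjoints and the Hilbert--Schmidt norm is adjoint-invariant; this splices the four quantities into the asserted chain of equalities.

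I do not expect a genuine obstacle. The content is just the observation that $A-A^2$ equals an off-diagonal block times its adjoint whenever $\left(\begin{smallmatrix}A&B\\ B^*&D\end{smallmatrix}\right)$ is a projection, together with the evaluation $\TR(A,\ID-A)=\tr(A-A^2)$. The only points that warrant a word of care are the justification that $\pi'(\one_{\bGG})=\ID_{\mcH'}$ and the interchange of $\TR$ with the sum over $b$ — both harmless, the latter because the inner sum is finite (so only finite additivity of $\TR$ in its second slot is invoked) and the outer sum consists of non-negative terms, finite by \eqref{FTC} although the identities are valid in $[0,+\infty]$ regardless.
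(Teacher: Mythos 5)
Your proposal is correct and follows essentially the same route as the paper: both reduce $(F_1,F_1)=\sum_a\TR\bigl(\pi'_{11}(\one_a),\pi'_{11}(\one-\one_a)\bigr)$ to $\sum_a\tr\bigl(\pi'_{12}(\one_a)\pi'_{21}(\one_a)\bigr)$ by reading off the $(1,1)$ block of the relation $\pi'(\one_a)\pi'(\one-\one_a)=0$ (your $Q_a^2=Q_a$ is the same identity), and then use adjoint-invariance of the Hilbert--Schmidt norm for the middle equality. The extra care you take over $\pi'(\one_\bGG)=\ID$ and the finite interchange of $\TR$ with the sum over $b$ is sound but not a departure from the paper's argument.
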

Besides establishing the~\eqref{FTC} for~$\iota_2$, this shows that
the~\eqref{FTC}, which we are assuming here, is equivalent to the
corresponding condition in the statement of the Oddity~Theorem in
Section~\ref{intro}.
\begin{proof}
  The middle equality is trivial because
  $\pi'_{12}(\one_a)={{\pi'}_{21}(\one_a)}^*$, which holds because
  $\pi'(\one_a)$ is self adjoint.  By definition
  \begin{equation*}
    (F_1,F_1)=\sum_{a,d\st a\neq d}\TR(\pi'_{11}(\one_a),\pi'_{11}(\one_d))=
    \sum_a\TR(\pi'_{11}(\one_a),\pi'_{11}(\one-\one_a)) \period
\end{equation*}    
  Since $\pi'_{11}(\one)=\ID$, $\pi'_{11}(\one_a)$ and
  $\pi'_{11}(\one-\one_a)$ commute, so
  \begin{equation*}
    \TR(\pi'_{11}(\one_a),\pi'_{11}(\one-\one_a))=
    \tr(\pi'_{11}(\one_a)\pi'_{11}(\one-\one_a)) \period
  \end{equation*}
One calculates
\begin{equation*}
  0=(\pi'(\one_a)\pi'(\one-\one_a))_{11}
  =\pi'_{11}(\one_a)\pi'_{11}(\one-\one_a)
    +\pi'_{12}(\one_a)\pi'_{21}(\one-\one_a) \period
\end{equation*}

Since $\pi'(\one)=\ID$ we have $\pi'_{21}(\one)=0$, whence
\begin{equation*}
  \pi'_{11}(\one_a)\pi'_{11}(\one-\one_a)=
  \pi'_{12}(\one_a)\pi'_{21}(\one_a) \period
\end{equation*}
Take traces of both sides and use
$\pi'_{12}(\one_a)={{\pi'}_{21}(\one_a)}^*$ to get $(F_1,F_1) = \sum_a
\norm{\pi'_{21}(\one_a)}_{HS}^2$. The formula for~$(F_2,F_2)$ follows in
exactly the same way.
\end{proof}

\begin{proposition}\label{OddLimP}
  Assume the reigning hypotheses of this subsection.  Let $F_0$~be an
  $|A|$-tuple of positive semidefinite operators satisfying $(\mcT^N
  F_0)_a\leq CF_a$ for some fixed integer~$N\geq 0$, some fixed $C>0$,
  and for all~$a\in A$. Then
  \begin{equation}\label{OddLimE}
    \wklim_{\epsilon\to 0+} \epsilon\sum_{n\geq 0} e^{-\epsilon n} \mcT^n
       F_0 = \frac{(F_0,F)}{(F,F)} F
    \period
  \end{equation}
\end{proposition}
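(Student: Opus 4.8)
The plan is to show that every componentwise weak subsequential limit of $F_\epsilon:=\epsilon\sum_{n\ge 0}e^{-\epsilon n}\mcT^nF_0$, as $\epsilon\to 0+$, equals $\tfrac{(F_0,F)}{(F,F)}F$, and then to upgrade this to convergence of the whole family. For the upgrade I would use that, by the estimate~\eqref{TF1} from the proof of Proposition~\ref{limP}, the operators $(F_\epsilon)_a$ are uniformly norm-bounded for $0<\epsilon\le 1$, so they lie in a fixed norm-ball of $\mcB(\mcH)$; since $\mcH$ is separable such a ball is metrizable in the weak operator topology, and hence so is the product over $a\in A$. In a metric space, if every sequence $\epsilon_k\to 0+$ admits a subsequence along which $F_{\epsilon_k}$ converges to a fixed limit $L$, then $F_\epsilon\to L$; this is the standard subsequence argument.

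So fix a sequence $\epsilon_k\to 0+$; after passing to a subsequence we may take it decreasing, and then Proposition~\ref{limP} furnishes a further subsequence $(\epsilon_j)$ along which $F_\epsilon$ converges componentwise weakly to an Eff-vector $F_L$ with $F_L\le CF$, $\mcT F_L=F_L$, and $(F_L,F_1)=(F_0,F_1)$ for every Eff-vector $F_1$ with $(F,F_1)<\infty$. The heart of the matter is to identify $F_L$, and here the irreducibility hypothesis of the Oddity setup is spent. Let $\mu_L$ be the mu-map of $F_L$ (Proposition~\ref{Eff-gives-mu}); by Lemma~\ref{Eff-leq-mu} the inequality $F_L\le CF$ translates into $\mu_L\le C\mu$. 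The boundary intertwiner attached to the mu-map $C\mu$ is $(\sqrt C\,\iota,\mcH')$, whose representation space $\mcH'$ is irreducible by hypothesis, so Corollary~\ref{sub-rep-irr} forces $\mu_L=s\,(C\mu)$ for some $s\in[0,1]$; taking Eff-vectors gives $F_L=tF$ with $t=sC\ge 0$. I expect this identification to be the only non-routine point.

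It remains to pin down $t$. The reigning~\eqref{FTC} means $(F,F)<\infty$, so I may apply the third property of $F_L$ with $F_1=F$ to obtain $t(F,F)=(F_L,F)=(F_0,F)$. Since $\iota$ is imperfect, Proposition~\ref{(F,F)=0} gives $(F,F)\ne 0$; combined with $(F,F)<\infty$ this makes $t=(F_0,F)/(F,F)$ well defined, so $F_L=\tfrac{(F_0,F)}{(F,F)}F$. As this holds for every subsequential limit, the metrizability remark gives that $F_\epsilon$ itself converges componentwise weakly to $\tfrac{(F_0,F)}{(F,F)}F$ as $\epsilon\to 0+$, which is~\eqref{OddLimE}.
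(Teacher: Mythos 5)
Your proof is correct and takes essentially the same route as the paper's: extract a subsequential componentwise weak limit via Proposition~\ref{limP}, identify it as $tF$ via Corollary~\ref{sub-rep-irr} using the irreducibility of $\mcH'$, determine $t$ from the identity $(F_L,F)=(F_0,F)$ together with $0<(F,F)<\infty$, and conclude by the uniqueness-of-subsequential-limits argument. The paper's version is merely terser; your explicit passage from $F_L\leq CF$ to $\mu_L\leq C\mu$ via Lemma~\ref{Eff-leq-mu} and your metrizability justification of the final upgrade are exactly what the paper leaves implicit.
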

\begin{proof}
  Note that $(F_0,F)=(F_0,\mcT^N F)=(\mcT^N F_0,F)\leq C(F,F)<\infty$.
  Use Proposition~\ref{limP} to see that the limit exists for some
  subsequence $\epsilon_j\to 0+$. Use Corollary~\ref{sub-rep-irr} to
  see that the limit must be of the form~$tF$. Again by
  Proposition~\ref{limP} the value of~$t$ must be as shown. Since
  any subsequence such that the limit exists gives the same limit,
  that limit must be valid for $\epsilon\to 0+$.
\end{proof}

\begin{remark}
If the hypotheses of Proposition~\ref{OddLimP} hold, except
that instead of the~\eqref{FTC} one has~$(F,F)=\infty$, and
if~$(F_0,F)<\infty$, then a similar argument shows that the limit is
zero.
\end{remark}

\begin{theorem} Assume the reigning hypotheses of this
  subsection. Let $\pi'_N:\cp\to\mcB(\mcH_N')$ be a boundary
  representation and~$\iota_N:\mcH\to\mcH_N'$ a
  boundary intertwiner.  Let~$\mu_N$ and $F_N$ be
  associated to~$\iota_N$, as in Section~\ref{imfS}. Then~$\mu_N$ is a
  scalar multiple of~$\mu$.
\end{theorem}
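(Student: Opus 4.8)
The plan is to reconstruct $\mu$ from the matrix coefficients of $\pi$ by a weak–limit formula carried on the good vectors of $\iota$, and then to test that formula against the second intertwiner $\iota_N$. First I would record that the good vectors of $\iota$ are dense in $\mcH$: the subspace $\mcH'_B$ is a closed $\cp$-subrepresentation of the irreducible $\mcH'$, hence $0$ or $\mcH'$; were it $\mcH'$, the perfect realization $\iota_B\colon\mcH_B\to\mcH'_B=\mcH'$ of Corollary~\ref{gvd}, together with the isometry $\iota\colon\mcH\hookrightarrow\mcH'$ restricting to it, would force $\mcH_B=\mcH$ and $\iota$ perfect, against hypothesis. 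So $\mcH_B=0$, and Proposition~\ref{gv-dense} supplies a dense subspace of good vectors; also $(F,F)>0$ by Proposition~\ref{(F,F)=0}.

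Next, for a good vector $w$ of $\iota$ put $E_a=w\otimes\bar w$ for every $a\in A$; then $\mcT^N E\le CF$, so Proposition~\ref{OddLimP} applies and the limit $\wklim_{\epsilon\to0+}\epsilon\sum_{n\ge0}e^{-\epsilon n}\mcT^nE=\tfrac{(E,F)}{(F,F)}F$ exists genuinely, with $(E,F)=(\lvert A\rvert-1)\lVert w\rVert^2$. Expanding $(\mcT^nE)_a$ by Lemma~\ref{potenzaT} and keeping track of how $\lambda(z)$ permutes the clopen partition $\{\bGG(y)\}$ of $\bGG$ — the tree bookkeeping in the proof of Lemma~\ref{l:sgvinv} — this refines, for every $z\in\Gamma$, to
\begin{equation*}
  \wklim_{\epsilon\to0+}\epsilon\sum_{x\in\Gamma(z)}e^{-\epsilon\lvert x\rvert}\,\pi(x)w\otimes\overline{\pi(x)w}=\frac{\lVert w\rVert^2}{(F,F)}\,\mu(\one_z),
\end{equation*}
and polarizing in $w$ gives the bilinear version with $\pi(x)w\otimes\overline{\pi(x)w'}$ and $\langle w,w'\rangle$ in place of $\pi(x)w\otimes\overline{\pi(x)w}$ and $\lVert w\rVert^2$.

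Now $\iota_N$ enters, and this step is the main obstacle. Conjugation $A\mapsto\iota_N A\iota_N^{\,*}$ is weak–operator continuous and, by Lemma~\ref{gvb}, the partial sums $\sum_{\lvert x\rvert=n}\pi(x)w\otimes\overline{\pi(x)w}$ are uniformly bounded, so the identity of the previous paragraph transfers into $\mcB(\mcH_N')$:
\begin{equation*}
  \wklim_{\epsilon\to0+}\epsilon\sum_{x\in\Gamma(a)}e^{-\epsilon\lvert x\rvert}(\pi'_N(x)\iota_N w)\otimes\overline{(\pi'_N(x)\iota_N w)}=\frac{\lVert w\rVert^2}{(F,F)}\,\iota_N F_a\iota_N^{\,*}.
\end{equation*}
Reading the left side inside $\mcH_N'$ via Lemma~\ref{potenzaT} for the transfer operator $\mcT'$ attached to $\pi'_N$, and via Proposition~\ref{limP} applied there — which uses no irreducibility, only that $\mcT'^{\,N}$ of the relevant tuple is dominated by a multiple of the tautological Eff-vector $(\pi'_N(\one_a))_a$, which holds because $\iota_N$ carries the special good vectors of $\iota$ from Lemma~\ref{l:gsgv} to vectors of the right type — one extracts $\iota_N F_a\iota_N^{\,*}\le C'\pi'_N(\one_a)$ in $\mcB(\mcH_N')$ for every $a$, and hence, after compressing by $\iota_N^{\,*}(\cdot)\iota_N$ and iterating the covariance $\pi(x)(F_N)_a\pi(x)^{-1}=\mu_N(\one_{xa})$ down the tree, $F\le C''F_N$, i.e.\ $\mu\le C''\mu_N$ by Lemma~\ref{Eff-leq-mu}. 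The delicacy here is precisely that $\mcH_N'$ is neither assumed irreducible nor known to satisfy a Finite Trace Condition, so the full apparatus of Section~\ref{weak-limit} is unavailable there and one must make do with the irreducibility-free Proposition~\ref{limP}, fed the good vectors that $\iota_N$ does control.

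Finally, the domination $\mu\le C''\mu_N$ yields, via Proposition~\ref{sub-rep}, a bounded $\cp$-map $\psi\colon\mcH_N'\to\mcH'$ with $\psi\iota_N=\iota$ and dense range; then $\psi\psi^{*}$ is a nonzero $\cp$-endomorphism of the irreducible $\mcH'$, so $\psi\psi^{*}=s\,\ID_{\mcH'}$ with $s>0$ and $\psi$ is a scaled coisometry. Splitting $\mcH_N'=\ker\psi\oplus(\ker\psi)^{\perp}$ as $\cp$-representations, one has $(\ker\psi)^{\perp}\cong\mcH'$, the $(\ker\psi)^{\perp}$-component of $\iota_N$ is $\iota$ transported by this isomorphism and rescaled by $\tfrac1{\sqrt s}$, and therefore $\mu_N=\mu_0+\tfrac1s\mu$, where $\mu_0\ge0$ is the mu-map of the boundary intertwiner $v\mapsto P_{\ker\psi}\iota_N v$ of $\pi$ into $\ker\psi$. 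Since $\mu\le C''\mu_N$ forces $\iota_N$ to be bounded below, only finitely many mutually orthogonal copies of $\mcH'$ can be split off this way; applying the previous paragraph again to $v\mapsto P_{\ker\psi}\iota_N v$ (so that $\mu_0$ in turn dominates a multiple of $\mu$ and is nonzero whenever $\ker\psi\neq 0$) and iterating, the splitting terminates after finitely many stages with residual kernel $0$. Telescoping the identities $\mu_N=\mu_0+\tfrac1s\mu$ through these finitely many stages gives $\mu_N=t\mu$ with $t$ a positive sum of reciprocals of the successive $s$'s, as required.
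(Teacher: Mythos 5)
Your overall strategy --- reconstruct $\mu$ from matrix coefficients via a weak limit on good vectors, then compare with $\iota_N$ --- is genuinely different from the paper's, and it founders at the transfer step into $\mcH_N'$. To apply Proposition~\ref{limP} (or your $\mcT'$-version of it) inside $\mcH_N'$ you need the tuple $E'_a=\iota_N w\otimes\overline{\iota_N w}$ to satisfy $\mcT'^{\,N}E'\le C\,(\pi'_N(\one_a))_a$, i.e.\ you need $\iota_N w$ to be a good vector for the tautological realization of $\pi'_N$. You justify this by saying that $\iota_N$ carries the special good vectors of $\iota$ to ``vectors of the right type,'' but the special-good-vector condition~\eqref{eq:sgv} is a statement about $\mu=\iota^*\pi'(\cdot)\iota$: from $v\otimes\bar v\le C\mu(\one_z)$ one only gets $\iota_N v\otimes\overline{\iota_N v}\le C\,\iota_N\mu(\one_z)\iota_N^*$, and the operator $\iota_N\mu(\one_z)\iota_N^*=\iota_N\iota^*\pi'(\one_z)\iota\iota_N^*$ has no a priori relation to $\pi'_N(\one_z)$. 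Establishing such a relation already amounts to a comparison between $\mu$ and $\mu_N$, which is exactly what is to be proved; indeed the intermediate claim $\mu\le C''\mu_N$ cannot follow from the hypotheses by any soft argument of this kind (it fails outright for $\iota_N=0$, and nothing a priori prevents $\mu_N$ from being deficient in some direction). A secondary problem: even granting $\mu\le C''\mu_N$, the terminal induction --- peeling off copies of $\mcH'$ and asserting the residual kernel vanishes after finitely many stages --- is not justified; the scalars $s_j$ are constrained only by $\sum_j s_j^{-1}\le\norm{\mu_N(\one)}$, which does not force termination.

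The paper avoids all of this by never leaving $\mcB(\mcH)$: it sets $t=\max\{t\ge 0 \st \text{$\mu_N-t\mu$ is positive}\}$, puts $\mu_t=\mu_N-t\mu$, and shows $\mu_t=0$. The weak-limit machinery (Proposition~\ref{OddLimP}, Corollary~\ref{FatouC}) is applied only to the fixed realization $\iota$, for which the~\eqref{FTC} and the irreducibility of $\mcH'$ are available; $\mu_N$ enters only through scalar pairings such as $(F_0,F_t)=\langle\mu_t(\one-\one_a)u,u\rangle$ with $F_0$ built from a good vector $u$ of $\iota$, together with Lemma~\ref{NotPosL} applied to $\mu_t-\delta\mu$ and to $-\mu_t$. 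If you want to salvage your approach, the lesson is that the second intertwiner should be probed only through such scalar quantities, not by transporting the good-vector estimates into its representation space.
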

\begin{proof} Any mention of ``good vectors'' in this proof means
  good vectors in~$\mcH$ relative to the boundary realization~$\iota$;
  we never consider good vectors relative to~$\iota_N$ or to any other
  boundary intertwiner. Suppose the good vectors weren't dense
  in~$\mcH$. According to Corollary~\ref{gvd} this would mean there
  was a nonzero $\Gamma$-invariant subspace $\mcH_B\subseteq\mcH$
  such that $\iota|_{\mcH_B}$ was a perfect realization; hence
  $\iota(\mcH_B)$~would be a $\cp$-subspace of~$\mcH'$. Since
  $\mcH'$~is irreducible, this would imply that $\iota(\mcH_B)$~was
  all of~$\mcH'$, a contradiction since $\iota$~isn't surjective.

  Let $t=\max\{t\geq 0 \st \text{$\mu_N-t\mu$ is a positive map}\}$
  and let $\mu_t=\mu_N-t\mu$. As per Section~\ref{imfS}, we
  find~$\iota_t$ and~$F_t$ associated to~$\mu_t$. After several steps,
  we shall show that~$\mu_t=0$, and it will follow that~$\mu_N=t\mu$,
  proving the theorem.

  Fix any~$\delta>0$. From the definition of~$t$ it follows that
  $\mu_t-\delta\mu$ is \emph{not} a positive map. As per
  Lemma~\ref{NotPosL} choose~$a\in A$ so that
  $(\mu_t-\delta\mu)(\one-\one_a)$ is not positive~semidefinite,
  and a good vector~$u\in\mcH$ so that 
  \begin{equation}\label{wOddE}
    \langle  (\mu_t-\delta\mu)(\one-\one_a) u,u \rangle<0 \period
  \end{equation}
  Define~$F_0$ by
  \begin{align*}
    (F_0)_{a}&=u\otimes\bar u &
    (F_0)_c&=0 \qquad\text{for $c\neq a$.}
  \end{align*}
  Then it follows from~\eqref{wOddE} that
  \begin{multline}\label{F0FE}
    (F_0,F_t)=\TR(u\otimes\bar u,\mu_t(\one-\one_a)) \\
       =\langle \mu_t(\one-\one_a) u,u \rangle
       <\langle \delta\mu(\one-\one_a) u,u \rangle
       =\delta (F_0,F)
    \period
  \end{multline}

  According to Proposition~\ref{OddLimP}
  \begin{equation*}
    F_L=\wklim_{\epsilon\to 0+} \epsilon\sum_{n\geq 0} e^{-\epsilon n} \mcT^n
       F_0 = \frac{(F_0,F)}{(F,F)} F
  \end{equation*}
  and this is a nonzero multiple of~$F$ because of~\eqref{F0FE}.
  Multiplying~$u$ by a scalar, we may assume the limit~$F_L$ is~$F$
  itself.

  Now using Proposition~\ref{limP}, Corollary~\ref{FatouC}, the
  identities $(\mcT F_1,F_2)=(F_1,\mcT F_2)$, $\mcT F_t=F_t$, and
  $\mcT F=F$, equation~\eqref{F0FE} and again Proposition~\ref{limP},
  we find:
  \begin{align*}
    (F,F_t)&=(F_L,F_t)
       =(\wklim_{\epsilon\to 0+} \epsilon\sum_{n\geq 0}
         e^{-\epsilon n} \mcT^n F_0,F_t) \\
       &\leq \liminf_{\epsilon\to 0+} (\epsilon\sum_{n\geq 0}
         e^{-\epsilon n} \mcT^n F_0,F_t) \\ 
       &=\liminf_{\epsilon\to 0+}\epsilon\, (F_0,F_t)/(1-e^{-\epsilon})
       = (F_0,F_t) < \delta (F_0,F) \\
       &=\delta \lim_{\epsilon\to 0+} \epsilon (F_0,F)/(1-e^{-\epsilon})
       =\delta(\wklim_{\epsilon\to 0+} \epsilon\sum_{n\geq 0}
         e^{-\epsilon n} \mcT^n F_0,F) \\ 
       &=\delta(F_L,F)=\delta(F,F)
     \period
  \end{align*}
  Send $\delta\to 0+$ to conclude that $(F,F_t)=0$.

  Unless $\mu_t=0$, you may apply Proposition~\ref{NotPosL} to~$-\mu_t$
  to find~$b\in A$ such that $-\mu_t(\one-\one_b)$~is not positive
  definite. Then choose a good vector~$v\in\mcH$ so that $\langle
  \mu_t(\one-\one_b)v,v)\rangle>0$, and define~$F_1$ by
  \begin{align*}
    (F_1)_{b}&=v\otimes\bar v &
    (F_1)_c&=0 \qquad\text{for $c\neq b$.}
  \end{align*}
  One may then calculate $(F_1,F_t)=\langle \mu_t(\one-\one_b)v,v
  \rangle>0$.  Since $v$~is good, one knows that for $n$~large enough
  $\mcT^n F_1\leq CF$. This leads to the contradiction
  \begin{equation*}
    0<(F_1,F_t)=(F_1,\mcT^n F_t)=(\mcT^n F_1,F_t)\leq C(F,F_t)=0
    \period
  \end{equation*}
  \end{proof}

\begin{corollary}\label{irreducible} Under the above hypotheses, $\pi=\pi_1$ and~$\pi_2$
  are irreducible $\Gamma$-representations. 
\end{corollary}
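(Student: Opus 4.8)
The plan is to squeeze everything out of the preceding theorem, which says that \emph{every} boundary intertwiner of~$\pi$ has mu-map a scalar multiple of~$\mu$, the mu-map of the given realization~$\iota$.

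First I would prove that $\pi=\pi_1$ is irreducible. Let $P\in\mcB(\mcH)$ be a self-adjoint projection commuting with $\pi(\Gamma)$ and set $\mu_P(G)=P\mu(G)P$. This is again a mu-map for~$\pi$: positivity is automatic, since compression by~$P$ preserves positive~semidefiniteness, and the covariance~\eqref{muCondE} holds because $P$ commutes with each~$\pi(x)$, so $\pi(x)\mu_P(G)\pi(x)^{-1}=P\,\pi(x)\mu(G)\pi(x)^{-1}\,P=P\,\mu(\lambda(x)G)\,P=\mu_P(\lambda(x)G)$. By Proposition~\ref{mu-gives-iota} some boundary intertwiner has mu-map~$\mu_P$, so by the preceding theorem $\mu_P=c\,\mu$ for some scalar~$c$. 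Evaluating at~$\one$ and using $\mu(\one)=\ID$ (Lemma~\ref{perfect-by-mu}, valid because $\iota$ is a boundary realization) gives $P=\mu_P(\one)=c\,\ID$; since $P$ is a projection, $P=0$ or $P=\ID$. Thus the von~Neumann algebra $\pi(\Gamma)'$ has only trivial projections, so $\pi(\Gamma)'=\CC\,\ID$ and $\pi$ is irreducible.

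For~$\pi_2$ the idea is to verify that $\iota_2:\mcH_2\to\mcH'$ is itself an imperfect boundary realization of~$\pi_2$ to which all the reigning hypotheses apply, and then to rerun the previous paragraph with $(\pi_2,\iota_2,\mu_2)$ in place of $(\pi,\iota,\mu)$. The map $\iota_2$ is an isometric $\Gamma$-inclusion because $\mcH_2$ is $\Gamma$-invariant; it is imperfect because $\mcH_1=\iota(\mcH)\neq 0$; $\pi'$ is an irreducible $\cp$-representation by hypothesis; and the~\eqref{FTC} for~$F_2$ follows from Proposition~\ref{odd-symm}, which gives $(F_2,F_2)=(F_1,F_1)<\infty$. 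The one point deserving a short argument is that $\iota_2(\mcH_2)$ is cyclic: the closed span of $\pi'(C(\bGG))\mcH_2$ is $C(\bGG)$-invariant, and it is $\Gamma$-invariant by the covariance relation together with the $\Gamma$-invariance of~$\mcH_2$, hence it is a nonzero $\cp$-invariant subspace of the irreducible space~$\mcH'$, so it equals~$\mcH'$. Granting this, the preceding theorem applies to $(\pi_2,\iota_2)$ and the projection argument above shows~$\pi_2$ is irreducible.

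All the content is already in the preceding theorem; the only matters requiring care are the routine check that $\mu_P$ is a genuine mu-map and the cyclicity verification for~$\iota_2$, and neither presents a real obstacle.
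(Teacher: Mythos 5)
Your proposal is correct and follows essentially the same route as the paper: the paper also derives irreducibility of $\pi_1$ by compressing $\iota$ with a projection $P_0\in\pi(\Gamma)'$ (it uses $\iota'=\iota P_0$, whose mu-map is exactly your $P_0\mu(\cdot)P_0$) and invoking the preceding theorem, and it also reduces $\pi_2$ to $\pi_1$ via the symmetry of Proposition~\ref{odd-symm}. Your version just makes explicit two points the paper leaves implicit — the evaluation at $\one$ showing $P=c\,\ID$, and the cyclicity of $\iota_2(\mcH_2)$ — which is fine.
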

\begin{proof} By Proposition~\ref{odd-symm} $\pi_2$ 
satisfies the same hypothesis as $\pi_1$, hence it is enough to prove
the assertion for $\pi_1$.  Assume, by contradiction, that $\pi_1$ is
reducible. Split $\mcH=\mcH_0\oplus\mcH_0^\perp$ into the direct sum
of $\pi_1$ invariant subspaces and let $P_0$ be the projection onto
$\mcH_0$. Let $\iota'=\iota P_0$: then $\iota'$ is another boundary
intertwiner for $\pi_1$, and it is clearly not equivalent to any scalar
multiple of~$\iota$.
\end{proof}

\begin{corollary} Under the above hypotheses, $\pi=\pi_1$ and~$\pi_2$
  are inequivalent as $\Gamma$-representations. 
\begin{proof}
Assume, by contradiction, that $U\pi_1=\pi_2U$  for some unitary
$U:\mcH_1\to\mcH_2$. Let  $\iota'= \iota_2U$. Then $\iota'$ is another
boundary realization of $\pi_1$ which must be equivalent to $\iota$,
that is
 $\iota'= J\iota$ where $J$ intertwines
$\pi'$ to itself. Since $\pi'$ is irreducible $J$ must be a scalar,
which
is impossible.
\end{proof}
\end{corollary}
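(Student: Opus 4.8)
The plan is to argue by contradiction, reducing everything to the theorem just proved together with Lemma~\ref{Equiv} and the irreducibility of $\pi'$. Suppose there were a unitary $\Gamma$-intertwiner $U:\mcH_1\to\mcH_2$, so that $U\pi_1(x)=\pi_2(x)U$ for all $x\in\Gamma$. First I would check that $\iota_2:\mcH_2\to\mcH'$ is itself a boundary realization of the $\Gamma$-representation $\pi_2$. It is an isometric $\Gamma$-map by construction: it is the inclusion of $\mcH_2=\mcH'\ominus\iota(\mcH)$ into $\mcH'$, and $\mcH_2$ is $\pi'(\Gamma)$-stable because $\iota(\mcH)$ is. Moreover $\iota_2(\mcH_2)=\mcH_2$ is $\cp$-cyclic in $\mcH'$, since the closure of $\pi'(C(\bGG))\mcH_2$ is a $\cp$-invariant subspace of the irreducible space $\mcH'$ which is nonzero (because $\iota$ is imperfect, so $\mcH_2\neq0$), hence all of $\mcH'$. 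Composing, $\iota':=\iota_2 U:\mcH\to\mcH'$ is then an isometric $\Gamma$-map onto a $\cp$-cyclic subspace, i.e. a boundary realization of $\pi=\pi_1$.

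Next I would apply the theorem immediately above to the boundary intertwiner $\iota'$: its associated mu-map $\mu'(G)=(\iota')^*\pi'(G)\iota'$ must be a scalar multiple of $\mu$, say $\mu'=t\mu$. Evaluating at $G=\one$ and using that $\iota'$ is an isometry gives $t\,\ID=\mu'(\one)=(\iota')^*\iota'=\ID$, so $t=1$ and $\mu'=\mu$. By Lemma~\ref{Equiv} the boundary intertwiners $\iota'$ and $\iota$ are then equivalent: there is a unitary $\cp$-equivalence $J:\mcH'\to\mcH'$ with $J\iota=\iota'=\iota_2 U$.

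Finally I would invoke Schur's Lemma for the irreducible $\cp$-representation $\pi'$: a unitary $\cp$-self-intertwiner of $\mcH'$ is a scalar, so $J=c\,\ID$ with $|c|=1$, whence $\iota_2 U=c\,\iota$. But the range of $\iota_2 U$ lies in $\mcH_2$, which is orthogonal to $\iota(\mcH)$, the range of $c\,\iota$; hence $\iota_2 U=c\,\iota=0$, contradicting the fact that $\iota$ is an isometry on the nonzero space $\mcH$ (nonzero because the zero subspace cannot be $\cp$-cyclic in the nonzero space $\mcH'$). I expect no serious obstacle here: the hard work is entirely contained in the preceding theorem, and the only points needing a moment's care are the verification that $\iota_2$ is a genuine boundary realization — essentially that $\mcH_2\neq0$ and is $\cp$-cyclic in $\mcH'$ — and the small bookkeeping that pins the scalar $t$ down to $1$.
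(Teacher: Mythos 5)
Your proposal is correct and follows exactly the paper's own argument: form $\iota'=\iota_2U$, use the preceding theorem together with Lemma~\ref{Equiv} to get a unitary $\cp$-self-intertwiner $J$ of the irreducible space $\mcH'$ with $J\iota=\iota'$, conclude $J$ is a scalar, and derive a contradiction from the orthogonality of the ranges. The extra details you supply (cyclicity of $\mcH_2$ in $\mcH'$, pinning the scalar $t$ to $1$) are exactly the steps the paper leaves implicit.
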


\begin{corollary} Under the above hypotheses, for $j=1$ or~$2$, any
  boundary realization of~$\pi_j$ is equivalent to~$\iota_j$.
\end{corollary}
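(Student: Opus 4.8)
The plan is to read this off from the Theorem just proved — that every boundary intertwiner of~$\pi=\pi_1$ has mu-map a scalar multiple of~$\mu=\mu_1$ — together with the fact, already advertised as ``only apparent'', that the asymmetry between the two direct summands is harmless.

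Take first $j=1$. Let $\iota_N:\mcH\to\mcH'_N$ be any boundary realization of~$\pi_1$, with associated mu-map~$\mu_N$. Being a boundary realization it is in particular a boundary intertwiner, so the Theorem above gives $\mu_N=t\mu_1$ for some scalar $t\ge0$. Now $\iota_N$ and $\iota_1$ are both boundary realizations, so the first part of Lemma~\ref{perfect-by-mu} gives $\mu_N(\one)=\ID=\mu_1(\one)$; hence $t\,\ID=\ID$, forcing $t=1$ and $\mu_N=\mu_1$. By Lemma~\ref{Equiv}, $\iota_N\sim\iota_1$.

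For $j=2$ the first step is to check that the reigning hypotheses of this subsection hold with $(\pi_2,\mcH_2,\iota_2,\mu_2,F_2)$ in place of $(\pi_1,\mcH,\iota_1,\mu_1,F_1)$. The ambient space is the same irreducible $\cp$-space~$\mcH'$; the closure of $\pi'(C(\bGG))\iota_2(\mcH_2)$ is $\pi'(\Gamma)$-invariant by covariance, hence a $\cp$-subspace of the irreducible~$\mcH'$, and it is nonzero because $\mcH_2\ne0$, so it is all of~$\mcH'$ and $\iota_2$ is a genuine boundary realization of~$\pi_2$; it is imperfect because $\iota_1(\mcH_1)\ne0$; and $(F_2,F_2)=(F_1,F_1)<\infty$ by Proposition~\ref{odd-symm}, so the~\eqref{FTC} is in force for~$\iota_2$. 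With these hypotheses verified, the Theorem above applies to~$\pi_2$ and tells us that every boundary intertwiner of~$\pi_2$ has mu-map a scalar multiple of~$\mu_2$; the argument of the previous paragraph then runs again verbatim, with $\mu_2$ in place of $\mu_1$ and using $\mu_2(\one)=\ID$, and yields $\iota_N\sim\iota_2$ for every boundary realization $\iota_N$ of~$\pi_2$.

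I do not anticipate a real obstacle: the substance is already contained in the Theorem that precedes this corollary. The two points needing any care are (i) confirming that $\iota_2$ inherits all of the reigning hypotheses — cyclicity of $\iota_2(\mcH_2)$ is automatic from the irreducibility of~$\mcH'$, and the~\eqref{FTC} for~$\iota_2$ is precisely Proposition~\ref{odd-symm}; and (ii) noting that it is the normalization $\mu(\one)=\ID$, which distinguishes boundary \emph{realizations} from general boundary intertwiners, that pins the scalar down to~$1$.
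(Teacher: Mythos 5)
Your proposal is correct and follows the same route as the paper: the paper's own proof is just the one-line remark that, by Proposition~\ref{odd-symm}, $\pi_2$ satisfies the same reigning hypotheses as $\pi_1$, leaving implicit exactly the steps you spell out (the preceding Theorem gives $\mu_N=t\mu_j$, and the normalization $\mu_N(\one)=\ID=\mu_j(\one)$ forces $t=1$, whence equivalence via Lemma~\ref{Equiv}). Your more careful verification of cyclicity and imperfection for $\iota_2$ is a welcome elaboration of what the paper takes for granted.
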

\begin{proof}
 By Proposition~\ref{odd-symm} $\pi_2$ 
satisfies the same hypothesis as $\pi_1$.
\end{proof}

\subsection{Duplicity}
The proof of the Oddity~Theorem, in the previous subsection, and the
proof of the Duplicity~Theorem, in this subsection, are closely
parallel. In this subsection the reigning hypotheses are as follows:
\begin{itemize}
\item $\pi:\Gamma\to\mcU(\mcH)$ is a unitary representation of~$\Gamma$.
\item We have two irreducible representations
  $\pi'_\pm:\cp\to\mcB(\mcH_\pm')$, which are inequivalent as
  representations of~$\cp$.
\item We have two perfect boundary realizations of~$\pi$,
  $\iota_\pm:\mcH\to\mcH_\pm'$.
\item Let $\iota:\mcH\to\mcH_+\oplus\mcH_-$ be defined by
  $\iota=\frac{1}{\sqrt 2}(\iota_+\oplus\iota_-)$.
  As per Section~\ref{imfS}, $\mu$ and~$F$ are associated to~$\iota$.
\item The~\eqref{FTC} holds for~$F$.
\end{itemize}

By hypothesis, $\iota_\pm$ are perfect realizations, but clearly
$\iota$ is not. Does $\iota(\mcH)$ lie in some proper $\cp$-subspace
of~$\mcH_1\oplus\mcH_2$? According to Lemma~\ref{sum-two-irrin} it
does not.
Let~$\mu_\pm$ and $F_\pm$ be associated with~$\iota_\pm$. As explained
in subsection~\ref{dir-sums}, $\mu=\frac{1}{2}(\mu_++\mu_-)$ and
$F=\frac{1}{2}(F_++F_-)$. The~\eqref{FTC} for~$F$ is $(F,F)<\infty$.
Proposition~\ref{(F,F)=0} says that $(F_+,F_+)=(F_-,F_-)=0$. Thus,
the~\eqref{FTC} is equivalent to $(F_+,F_-)<\infty$. Since $\iota$~is
not perfect, Proposition~\ref{(F,F)=0} says that~$(F,F)>0$,
i.e. $(F_+,F_-)>0$.

\begin{proposition} The~\eqref{FTC} holds for~$F$ if and only if
  \begin{equation*}
    \norm{(F_+)_a(F_-)_b}_{HS}<\infty
  \end{equation*}
whenever $a,b\in A$, $a\neq b$.
\end{proposition}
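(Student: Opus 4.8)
The plan is to reduce this to the already-established equivalence ``\eqref{FTC} for $F$ $\iff$ $(F_+,F_-)<\infty$'' together with the fact that the $(F_\pm)_a$ are \emph{orthogonal projections}. Recall from just above that $F=\tfrac12(F_++F_-)$ with $(F_+,F_+)=(F_-,F_-)=0$, so that $(F,F)<\infty$ is equivalent to $(F_+,F_-)<\infty$, and by definition
\begin{equation*}
  (F_+,F_-)=\sum_{a\neq b}\TR\bigl((F_+)_a,(F_-)_b\bigr)\period
\end{equation*}
This is a \emph{finite} sum of non-negative terms (since $A$ is finite and each $\TR$ is non-negative), so it is finite if and only if every individual summand $\TR((F_+)_a,(F_-)_b)$ is finite.

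Next I would compute each summand explicitly. Since $\iota_+$ and $\iota_-$ are perfect boundary realizations, Lemma~\ref{F-perfect} gives $(F_\pm)_a^2=(F_\pm)_a$, so each $(F_+)_a$ and each $(F_-)_b$ is a self-adjoint projection; in particular $\sqrt{(F_+)_a}=(F_+)_a$ and $\sqrt{(F_-)_b}=(F_-)_b$. Using the identity $\TR(S,T)=\norm{\sqrt{S}\sqrt{T}}_{HS}^2$ from Section~\ref{inner-product}, we get
\begin{equation*}
  \TR\bigl((F_+)_a,(F_-)_b\bigr)
  =\norm{\sqrt{(F_+)_a}\,\sqrt{(F_-)_b}}_{HS}^2
  =\norm{(F_+)_a(F_-)_b}_{HS}^2\period
\end{equation*}

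Combining the two displays, $(F_+,F_-)=\sum_{a\neq b}\norm{(F_+)_a(F_-)_b}_{HS}^2$, and the equivalence follows: $(F,F)<\infty$ iff $\norm{(F_+)_a(F_-)_b}_{HS}^2<\infty$ for each pair $a\neq b$ in $A$, i.e.\ iff $\norm{(F_+)_a(F_-)_b}_{HS}<\infty$ for each such pair. There is no real obstacle here; the only point worth flagging is that one must invoke perfectness of $\iota_\pm$ (via Lemma~\ref{F-perfect}) to replace the operator square roots by the operators themselves, and that the finiteness of $A$ is what lets one pass freely between finiteness of the sum and finiteness of each term.
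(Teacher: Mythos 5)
Your proof is correct and follows essentially the same route as the paper's: reduce \eqref{FTC} to finiteness of each $\TR((F_+)_a,(F_-)_b)$ via the finite sum, then use perfectness to make the components projections and identify $\TR((F_+)_a,(F_-)_b)$ with $\norm{(F_+)_a(F_-)_b}_{HS}^2$. The only cosmetic difference is that you invoke the listed identity $\TR(S,T)=\norm{\sqrt{S}\sqrt{T}}_{HS}^2$ directly, whereas the paper recomputes it as $\tr\bigl((F_+)_a(F_-)_b(F_+)_a\bigr)=\tr\bigl((F_+)_a(F_-)_b((F_+)_a(F_-)_b)^*\bigr)$; these are the same calculation.
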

\begin{proof}
By definition of the inner product $(F_+,F_-)$, the~\eqref{FTC} means
that $\TR((F_+)_a,(F_-)_b)<\infty$ whenever~$a\neq b$.  Because
$\iota_\pm$ are perfect realizations, the components of~$F_\pm$ are
projections. For a pair of projections one has
\begin{multline*}
  \TR((F_+)_a,(F_-)_b) =\tr((F_+)_a(F_-)_b(F_+)_a) \\
  =\tr((F_+)_a(F_-)_b((F_+)_a(F_-)_b)^*) =\norm{(F_+)_a(F_-)_b}_{HS}^2.
\end{multline*}
\end{proof}
This shows that the~\eqref{FTC} which we are assuming here is
equivalent to the finiteness condition in the statement of the
Duplicity~Theorem in Section~\ref{intro}.

\begin{proposition}\label{DupLimP}
  Assume the reigning hypotheses of this subsection.  Let $F_0$~be an
  $|A|$-tuple of positive operators satisfying $(\mcT^N F_0)_a\leq
  CF_a$ for some fixed integer~$N\geq 0$, some fixed $C>0$, and for
  all~$a\in A$. Then
  \begin{equation}\label{DupLimE}
    F_L=\wklim_{\epsilon\to 0+} \epsilon\sum_{n\geq 0} e^{-\epsilon n} \mcT^n
       F_0 = \frac{(F_0,F_-)F_++(F_0,F_+)F_-}{(F_+,F_-)}
    \period
  \end{equation}
\end{proposition}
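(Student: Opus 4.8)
The plan is to mirror the proof of Proposition~\ref{OddLimP}, replacing the single irreducible $\cp$-space there by the pair $\mcH'_\pm$. Throughout write $F=\tfrac12(F_++F_-)$ and $\mu=\tfrac12(\mu_++\mu_-)$, as in the reigning hypotheses. First I would apply Proposition~\ref{limP}: its hypothesis $(\mcT^N F_0)_a\leq CF_a$ is exactly what is assumed, so along a suitable subsequence $(\epsilon_j)_j\to 0+$ the componentwise weak limit $F_L$ exists, and it satisfies $F_L\leq CF$, $\mcT F_L=F_L$, and $(F_L,F_1)=(F_0,F_1)$ for every Eff-vector $F_1$ with $(F,F_1)<\infty$. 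Observe that $(F_0,F)=(\mcT^N F_0,F)\leq C(F,F)<\infty$ by the \eqref{FTC}; since $(F_0,F)=\tfrac12(F_0,F_+)+\tfrac12(F_0,F_-)$ with both summands nonnegative, both $(F_0,F_\pm)$ are finite. Likewise $(F,F_+)=\tfrac12(F_+,F_+)+\tfrac12(F_-,F_+)=\tfrac12(F_+,F_-)<\infty$ by Proposition~\ref{(F,F)=0} and the \eqref{FTC}, and similarly $(F,F_-)<\infty$.

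Next I would pin down the shape of $F_L$. Let $\mu_L$ be the mu-map with $F(\mu_L)=F_L$ (Proposition~\ref{Eff-gives-mu}). From $F_L\leq CF=F(C\mu)$ and Lemma~\ref{Eff-leq-mu} we get $\mu_L\leq C\mu=\tfrac{C}{2}(\mu_++\mu_-)$. Applying Corollary~\ref{SubMuP} to the boundary intertwiners $\iota_\pm$, whose target $\cp$-spaces $\mcH'_\pm$ are irreducible and inequivalent, yields nonnegative constants $t_\pm$ with $\mu_L=t_+\mu_++t_-\mu_-$, hence $F_L=t_+F_++t_-F_-$.

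To compute $t_\pm$ I would feed $F_1=F_\pm$ into the identity $(F_L,F_1)=(F_0,F_1)$, legitimate because $(F,F_\pm)<\infty$. Using $(F_+,F_+)=(F_-,F_-)=0$ (Proposition~\ref{(F,F)=0}) together with bilinearity and symmetry of $(\cdot,\cdot)$,
\begin{align*}
(F_0,F_+) &= (F_L,F_+) = t_-(F_+,F_-) \comma &
(F_0,F_-) &= (F_L,F_-) = t_+(F_+,F_-) \period
\end{align*}
Since $\iota$ is not perfect, $(F_+,F_-)>0$ (again Proposition~\ref{(F,F)=0}), so $t_+=(F_0,F_-)/(F_+,F_-)$ and $t_-=(F_0,F_+)/(F_+,F_-)$, which is exactly~\eqref{DupLimE} for the subsequential limit. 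Finally, as in Proposition~\ref{OddLimP}, because every subsequence along which a limit exists produces this same value, the full limit as $\epsilon\to 0+$ exists and equals it. I do not expect a genuine obstacle here; the only points needing attention are the finiteness of $(F,F_\pm)$ before invoking the third conclusion of Proposition~\ref{limP}, and the fact that $(F_\pm,F_\pm)=0$ is what kills two of the four cross terms and thereby disentangles $t_+$ from $t_-$.
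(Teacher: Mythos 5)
Your proposal is correct and follows exactly the route the paper intends: its one-line proof cites Proposition~\ref{limP}, Corollary~\ref{SubMuP}, and Corollary~\ref{BdConvC} together with the finiteness of $(F_+,F_-)$, and your argument is precisely the expansion of that citation, including the verification of $(F,F_\pm)<\infty$ and the use of $(F_\pm,F_\pm)=0$ to isolate $t_+$ and $t_-$.
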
 
\begin{proof}
  Proposition~\ref{limP}, Corollary~\ref{SubMuP}, and
  Corollary~\ref{BdConvC} together with the finiteness of
  $(F_+,F_-)$.
\end{proof}

\begin{remark} If the hypotheses of Proposition~\ref{DupLimP} hold, except
that instead of the~\eqref{FTC} one has~$(F_+,F_-)=\infty$, and
if~$(F_0,F)<\infty$, then a similar argument shows that the limit is
zero.
\end{remark}

\begin{theorem}\label{duplicity} Assume the reigning hypotheses of this
  subsection. Let $\pi'_N:\cp\to\mcB(\mcH_N')$ be any boundary
  representation and suppose that~$\iota_N:\mcH\to\mcH_N'$ is a
  boundary intertwiner (a~$\Gamma$-map).  Let~$\mu_N$ and $F_N$ be
  associated to~$\iota_N$, as per
  Section~\ref{imfS}. Then~$\mu_N=t_+\mu_++t_-\mu_-$ for nonnegative
  coefficients~$t_\pm$.
\end{theorem}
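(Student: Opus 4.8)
The plan is to mimic the proof of the Oddity~Theorem given just above, replacing the single maximal coefficient~$t$ by a maximal \emph{pair}~$(t_+,t_-)$, and the one-term weak limit of Proposition~\ref{OddLimP} by the two-term limit of Proposition~\ref{DupLimP}. The first thing to settle is that the good vectors of~$\mcH$ relative to~$\iota$ are dense. If not, Corollary~\ref{gvd} produces a nonzero $\Gamma$-invariant $\mcH_B\subseteq\mcH$ on which $\iota$ restricts to a perfect realization $\iota_B:\mcH_B\to\mcH'_B$, with $\mcH'_B=\iota_B(\mcH_B)$ a $\cp$-subspace of $\mcH'_+\oplus\mcH'_-$. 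By the reasoning in the proof of Lemma~\ref{sum-two-irrin}, the only such subspaces are $0$, $\mcH'_+$, $\mcH'_-$ and $\mcH'_+\oplus\mcH'_-$. Since $\iota=\tfrac{1}{\sqrt{2}}(\iota_+\oplus\iota_-)$ with both $\iota_\pm$ injective, $\iota(\mcH_B)\subseteq\mcH'_+\oplus 0$ would force $\mcH_B=0$, symmetrically for $0\oplus\mcH'_-$, and $\mcH'_B=\mcH'_+\oplus\mcH'_-$ with $\iota_B$ onto would force $\mcH'_-=0$. All three are impossible, so $\mcH_B=0$ and the good vectors are dense.

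Next, let $S=\{(s,t)\in\RR_{\ge0}^2:\mu_N-s\mu_+-t\mu_-\text{ is a positive map}\}$; evaluating a member of~$S$ at~$\one$ gives $\mu_N(\one)\ge(s+t)\ID$, so $S$ is bounded, and it is clearly closed and contains the origin. Choose $(t_+,t_-)\in S$ with $t_++t_-$ maximal and put $\mu_t=\mu_N-t_+\mu_+-t_-\mu_-$, a mu-map whose Eff-vector is $F_t=F_N-t_+F_+-t_-F_-$; it suffices to show $\mu_t=0$. The core of the argument is to prove $(F_-,F_t)=0$ and $(F_+,F_t)=0$. For the first: maximality makes $\mu_t-\delta\mu_+$ fail to be a positive map for every $\delta>0$, so Lemma~\ref{NotPosL} and density of good vectors give $a\in A$ and a good vector~$u$ with $\langle(\mu_t-\delta\mu_+)(\one-\one_a)u,u\rangle<0$. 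Let $F_0$ be supported at~$a$ with $(F_0)_a=u\otimes\bar u$. Then $(F_0,F_t)=\langle\mu_t(\one-\one_a)u,u\rangle$ and $(F_0,F_\pm)=\langle\mu_\pm(\one-\one_a)u,u\rangle$, whence $(F_0,F_t)<\delta(F_0,F_+)$ and, as $\mu_t\ge0$, $(F_0,F_+)>0$. Since~$u$ is good, $\mcT^nF_0\le CF$ for large~$n$ (Remark~\ref{potenza-good-vector}), so Proposition~\ref{DupLimP} gives $F_L=\wklim_{\epsilon\to0+}\epsilon\sum_{n\ge0}e^{-\epsilon n}\mcT^nF_0=\bigl((F_0,F_-)F_++(F_0,F_+)F_-\bigr)/(F_+,F_-)$, where $0<(F_+,F_-)<\infty$ by the \eqref{FTC}. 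Weak lower semicontinuity of $(\,\cdot\,,F_t)$ (Fatou, as in the proof of Corollary~\ref{FatouC}) and the identity $(\mcT^nF_0,F_t)=(F_0,F_t)$ (from $\mcT F_t=F_t$) give $(F_L,F_t)\le\liminf_{\epsilon\to0+}\tfrac{\epsilon}{1-e^{-\epsilon}}(F_0,F_t)=(F_0,F_t)$. Dropping the nonnegative $F_+$-term of $(F_L,F_t)$ leaves $\tfrac{(F_0,F_+)}{(F_+,F_-)}(F_-,F_t)\le(F_0,F_t)<\delta(F_0,F_+)$; dividing by $(F_0,F_+)/(F_+,F_-)>0$ and sending $\delta\to0$ gives $(F_-,F_t)=0$.

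The same computation, started now from the failure of $\mu_t-\delta\mu_-$ to be a positive map, produces $b\in A$, a good vector~$v$, a tuple $F_1$ supported at~$b$ with $(F_1,F_t)<\delta(F_1,F_-)$ and $(F_1,F_-)>0$, and a weak limit $F_L'=\bigl((F_1,F_-)F_++(F_1,F_+)F_-\bigr)/(F_+,F_-)$; this time the $F_-$-term of $(F_L',F_t)$ vanishes because $(F_-,F_t)=0$, so $\tfrac{(F_1,F_-)}{(F_+,F_-)}(F_+,F_t)\le(F_1,F_t)<\delta(F_1,F_-)$, and $\delta\to0$ yields $(F_+,F_t)=0$. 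Hence $(F,F_t)=\tfrac12(F_+,F_t)+\tfrac12(F_-,F_t)=0$. Finally, if $\mu_t\ne0$ then $-\mu_t$ is a norm-bounded covariant $^*$-map that is not positive, so Lemma~\ref{NotPosL} and density of good vectors give $c\in A$ and a good vector~$w$ with $\langle\mu_t(\one-\one_c)w,w\rangle>0$; with $\tilde F$ supported at~$c$ and $\tilde F_c=w\otimes\bar w$, one has $(\tilde F,F_t)>0$ while $\mcT^n\tilde F\le CF$ for large~$n$ forces $0<(\tilde F,F_t)=(\mcT^n\tilde F,F_t)\le C(F,F_t)=0$, a contradiction. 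So $\mu_t=0$, i.e.\ $\mu_N=t_+\mu_++t_-\mu_-$ with $t_\pm\ge0$. (Equivalently, the argument shows $\mu_N\le C(\mu_++\mu_-)$ for a suitable~$C$, whereupon Corollary~\ref{SubMuP} gives the conclusion at once.)

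I expect the main obstacle to be separating the two statements $(F_\pm,F_t)=0$: because the weak limit $F_L$ is a genuine mixture $\alpha_+F_++\alpha_-F_-$ rather than a multiple of a single Eff-vector, one must verify that the coefficient in front of the term to be isolated is strictly positive before dividing, and the two maximality conditions must be invoked in the right order so that the second vanishing result can exploit the first. The density of good vectors is the secondary difficulty, since $\mcH'=\mcH'_+\oplus\mcH'_-$ is reducible and one must use the classification of its $\cp$-subspaces rather than plain irreducibility.
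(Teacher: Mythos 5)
Your proposal is correct and follows essentially the same route as the paper: density of good vectors via Corollary~\ref{gvd} and the classification of $\cp$-subspaces of $\mcH'_+\oplus\mcH'_-$, extraction of a maximal pair $(t_+,t_-)$, the Ces\`aro/Abel weak limit of Proposition~\ref{DupLimP} combined with Fatou to force $(F_\pm,F_t)=0$, and the final contradiction via Lemma~\ref{NotPosL} applied to $-\mu_t$. The only (harmless) deviations are that you maximize $t_++t_-$ over the closed bounded set $S$ rather than maximizing $t_+$ first and then $t_-$ as the paper does, and that you spell out the second vanishing $(F_+,F_t)=0$ which the paper dismisses as ``an identical argument''; both choices yield the same key fact that neither $\mu_t-\delta\mu_+$ nor $\mu_t-\delta\mu_-$ is a positive map.
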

\begin{proof} Any mention of ``good vectors'' in this proof means
  good vectors in~$\mcH$ relative to the boundary realization~$\iota$;
  we never consider good vectors relative to~$\iota_N$,
  to~$\iota_\pm$, or to any other boundary intertwiner.

  Suppose the good vectors weren't dense in~$\mcH$. According to
  Corollary~\ref{gvd} this would mean there was a nonzero
  $\Gamma$-invariant subspace $\mcH_B\subseteq\mcH$ such that
  $\iota|_{\mcH_B}$ was a perfect realization; hence
  $\iota(\mcH_B)$~would be a $\cp$-subspace of~$\mcH'$. Since
  $\mcH'_\pm$~are irreducible and inequivalent, the only possibilities
  for that subspace would be~$0$, $\mcH'_+$, $\mcH'_-$, and
  $\mcH'_+\oplus\mcH'_-$. Since $\iota(v)=\frac{1}{\sqrt
    2}(\iota_+(v),\iota_-(v))$, no nonzero vector of~$\iota(\mcH)$
  lies in any of the three proper $\cp$-subspaces.  Moreover,
  $\iota(\mcH)$ isn't all of $\mcH'_+\oplus\mcH'_-$, so its subspace
  $\iota(\mcH_B)$ can't be either. One concludes that the good vectors
  are dense in~$\mcH$.

  Let $t_+=\max\{t\geq 0 \st \text{$\mu_N-t\mu_+$ is a positive
    map}\}$ and then let $t_-=\max\{t\geq 0 \st
  \text{$\mu_N-t_+\mu_+-t\mu_-$ is a positive map}\}$.  Let
  $\mu_R=\mu_N-t_+\mu_+-t_-\mu_-$.  Note that for any~$\delta>0$,
  neither $\mu_R-\delta\mu_+$ nor $\mu_R-\delta\mu_-$ is a positive
  map. As per Section~\ref{imfS}, we find~$\iota_R$ and~$F_R$
  associated to~$\mu_R$. After several steps, we shall show
  that~$\mu_R=0$, and it will follow that~$\mu_N=t_+\mu_++t_-\mu_-$,
  proving the theorem.

  Fix any~$\delta>0$. Using Lemma~\ref{NotPosL} choose~$a\in A$ so
  that $(\mu_R-\delta\mu_+)(\one-\one_a)$ is not positive~semidefinite,
  and a good vector~$u\in\mcH$ so that
  \begin{equation}\label{wDupE}
    \langle  (\mu_R-\delta\mu_+)(\one-\one_a) u,u \rangle<0 \period
  \end{equation}
  Define~$F_0$ by
  \begin{align*}
    (F_0)_{a}&=u\otimes\bar u &
    (F_0)_c&=0 \qquad\text{for $c\neq a$.}
  \end{align*}
  Then it follows from~\eqref{wDupE} that
  \begin{multline}\label{F0FR}
    (F_0,F_R)=\TR(u\otimes\bar u,\mu_R(\one-\one_a)) \\
       =\langle \mu_R(\one-\one_a) u,u \rangle
       <\langle \delta\mu_+(\one-\one_a) u,u \rangle
       =\delta (F_0,F_+)
    \period
  \end{multline}
  In particular this shows that $(F_0,F_+)>0$.
  

  Now using Corollary~\ref{FatouC}, the identities $(\mcT
  F_1,F_2)=(F_1,\mcT F_2)$, $\mcT F_R=F_R$, and $\mcT F=F$,
  equation~\eqref{F0FR} and Corollary~\ref{BdConvC}, we find:
  \begin{align*}
    &\frac{(F_0,F_+)(F_-,F_R)}{(F_+,F_-)}
      \leq
        \left(\frac{(F_0,F_-)F_++(F_0,F_+)F_-}{(F_+,F_-)},F_R\right) 
      =(F_L,F_R) \\
       &\qquad
      =(\lim_{\epsilon\to 0+} \epsilon\sum_{n\geq 0}
      e^{-\epsilon n} \mcT^n F_0,F_R)
      \leq \liminf_{\epsilon\to 0+} (\epsilon\sum_{n\geq 0}
      e^{-\epsilon n} \mcT^n F_0,F_R) \\
      &\qquad
      =\liminf_{\epsilon\to 0+}\epsilon\, (F_0,F_R)/(1-e^{-\epsilon})
      = (F_0,F_R)
      < \delta (F_0,F_+) \\
      &\qquad
      =\delta \lim_{\epsilon\to 0+} \epsilon
      (F_0,F_+)/(1-e^{-\epsilon})
       =\delta(\lim_{\epsilon\to 0+} \epsilon\sum_{n\geq 0}
       e^{-\epsilon n} \mcT^n F_0,F_+)  \\
       &\qquad
       =\delta(F_L,F_+)
       =\left(\frac{(F_0,F_-)F_++(F_0,F_+)F_-}{(F_+,F_-)},F_+\right) 
       =\delta (F_0,F_+)
     \period
  \end{align*}
  Cancel the factor of~$(F_0,F_+)$ and send $\delta\to 0+$ to conclude
  that $(F_-,F_R)=0$. An identical argument shows that $(F_+,F_R)=0$.
  Consequently $(F,F_R)=0$.

  The following final step is identical to the corresponding step in
  the previous subsection.  Unless $\mu_R=0$, you may apply
  Proposition~\ref{NotPosL} to~$-\mu_R$ to find~$b\in A$ such that
  $-\mu_R(\one-\one_b)$~is not positive semidefinite. Then choose a
  good vector~$v\in\mcH$ so that $\langle
  \mu_R(\one-\one_b)v,v\rangle>0$, and define~$F_1$ by
  \begin{align*}
    (F_1)_{b}&=v\otimes\bar v &
    (F_1)_c&=0 \qquad\text{for $c\neq b$.}
  \end{align*}
  One may then calculate $(F_1,F_R)=\langle \mu_R(\one-\one_b)v,v
  \rangle>0$.  Since $v$~is good, it follows that for $n$~large enough
  $\mcT^n F_1\leq CF$. This leads to the contradiction
  \begin{equation*}
    0<(F_1,F_R)=(F_1,\mcT^n F_R)=(\mcT^n F_1,F_R)\leq C(F,F_R)=0
    \period
  \end{equation*}
  \end{proof}

\begin{corollary} Under the above hypotheses, $\pi$ is an irreducible
  $\Gamma$-representation. 
\end{corollary}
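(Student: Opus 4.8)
The plan is to argue by contradiction, in the spirit of Corollary~\ref{irreducible} from the Oddity subsection: a proper nonzero $\pi$-invariant subspace would give rise to a boundary intertwiner whose mu-map cannot have the form permitted by Theorem~\ref{duplicity}. So I would start by supposing $\pi$ reducible, choosing a closed $\pi(\Gamma)$-invariant subspace $\mcH_0$ with $0\neq\mcH_0\neq\mcH$, and letting $P_0\in\mcB(\mcH)$ be the orthogonal projection onto it; then $P_0$ commutes with every $\pi(x)$ and $P_0\notin\{0,\ID\}$.

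Next I would feed $P_0$ into the reigning direct-sum realization $\iota=\tfrac{1}{\sqrt2}(\iota_+\oplus\iota_-):\mcH\to\mcH'$, with $\mcH'=\mcH'_+\oplus\mcH'_-$ and $\pi'=\pi'_+\oplus\pi'_-$. The composite $\iota P_0:\mcH\to\mcH'$ is a $\Gamma$-map, since $P_0$ commutes with $\pi$ and $\iota$ intertwines $\pi$ with $\pi'|_\Gamma$. To obtain an honest boundary intertwiner I would corestrict to $\mcH'_N:=\overline{\pi'(C(\bGG))\,(\iota P_0)(\mcH)}$, which is a $\cp$-subspace of $\mcH'$: its $\Gamma$-invariance follows from the covariance relation~\eqref{bdry-rpn} together with the $\Gamma$-invariance of $(\iota P_0)(\mcH)$. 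Writing $\iota_N:\mcH\to\mcH'_N$ for this corestriction, by construction $\iota_N(\mcH)$ is cyclic for $C(\bGG)$, so $\iota_N$ is a boundary intertwiner of $\pi$ (non-injectivity of $\iota_N$ is harmless, intertwiners not being required to be inclusions).

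Then comes the only real computation: the mu-map attached to $\iota_N$ is $\mu_N(G)=\iota_N^*\pi'(G)\iota_N=P_0\,\mu(G)\,P_0$, where $\mu$ is the mu-map of $\iota$. By Theorem~\ref{duplicity}, $\mu_N=t_+\mu_++t_-\mu_-$ for some $t_\pm\geq 0$, with $\mu_\pm$ the mu-maps of $\iota_\pm$. Evaluating at $G=\one$ and using that $\iota$, $\iota_+$ and $\iota_-$ are all isometries, so that $\mu(\one)=\mu_+(\one)=\mu_-(\one)=\ID$ by Lemma~\ref{perfect-by-mu}, I would obtain
\begin{equation*}
  P_0=P_0\,\ID\,P_0=\mu_N(\one)=(t_++t_-)\,\ID
  \period
\end{equation*}
Thus $P_0$ is a scalar operator, forcing $P_0\in\{0,\ID\}$ and contradicting the choice of $\mcH_0$; hence $\pi$ is irreducible. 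I do not expect a serious obstacle: the two points needing care are the verification that $\iota_N$ genuinely meets the cyclicity axiom of a boundary intertwiner and the identity $\mu_N=P_0\mu P_0$; once those are in place, the conclusion is an immediate application of Theorem~\ref{duplicity}.
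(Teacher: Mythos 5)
Your proposal is correct and is essentially the paper's own argument: the paper's proof says to argue as in Corollary~\ref{irreducible}, i.e.\ to form the boundary intertwiner $\iota P_0$ from a hypothetical invariant projection $P_0$ and observe that it is incompatible with the classification of boundary intertwiners given by Theorem~\ref{duplicity}. You have merely made explicit what the paper leaves as ``clearly'': the mu-map of $\iota P_0$ is $P_0\mu(\cdot)P_0$, and evaluating $\mu_N=t_+\mu_++t_-\mu_-$ at $\one$ forces $P_0=(t_++t_-)\ID$, which is impossible for a proper nonzero projection.
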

\begin{proof}
Argue as in the proof of Corollary~\ref{irreducible}.
\end{proof}
\begin{corollary} Under the above hypotheses, any nonzero
  boundary intertwiner of~$\pi$ is equivalent to~$s_+\iota_+$ for some
  $s_+>0$, to $s_-\iota_-$ for some $s_->0$, or to $s_+\iota_+\oplus
  s_-\iota_-$ for some $s_\pm>0$.
\end{corollary}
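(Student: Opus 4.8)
The plan is to deduce the statement directly from Theorem~\ref{duplicity} together with the structural lemmas of Section~\ref{imfS}. Let $\iota_N:\mcH\to\mcH'_N$ be a nonzero boundary intertwiner of~$\pi$ with associated mu-map~$\mu_N$. First I would note that $\mu_N\neq 0$: since $\mu_N(\one)=\iota_N^*\pi'_N(\one)\iota_N=\iota_N^*\iota_N$, a vanishing $\mu_N$ would force $\iota_N^*\iota_N=0$ and hence $\iota_N=0$, contrary to assumption. By Theorem~\ref{duplicity} we may write $\mu_N=t_+\mu_++t_-\mu_-$ with $t_+,t_-\geq 0$, and $\mu_N\neq 0$ rules out $t_+=t_-=0$.

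Next I would split into three cases and, in each, exhibit a standard boundary intertwiner with the same mu-map as~$\iota_N$; Lemma~\ref{Equiv} then delivers the required equivalence, since equivalence of boundary intertwiners is detected precisely by the mu-map. If $t_+>0$ and $t_-=0$, put $s_+=\sqrt{t_+}$; by the discussion of scalar multiples in subsection~\ref{scalar-mults} the intertwiner $s_+\iota_+$ has mu-map $s_+^2\mu_+=t_+\mu_+=\mu_N$, so $\iota_N\sim s_+\iota_+$. The case $t_+=0$, $t_->0$ is identical with the roles of $+$ and $-$ reversed, giving $\iota_N\sim s_-\iota_-$ where $s_-=\sqrt{t_-}$. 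In the remaining case $t_+,t_->0$ I would set $s_\pm=\sqrt{t_\pm}>0$, so that $s_+\iota_+$ and $s_-\iota_-$ have mu-maps $t_+\mu_+$ and $t_-\mu_-$ respectively. By Lemma~\ref{sum-two-irrin}, because $\mcH'_+$ and $\mcH'_-$ are irreducible and inequivalent and both $s_\pm\iota_\pm$ are nonzero, the direct sum of these two intertwiners in the sense of subsection~\ref{dir-sums} is simply $(s_+\iota_+\oplus s_-\iota_-,\mcH'_+\oplus\mcH'_-)$, which is therefore a genuine boundary intertwiner; its mu-map is $t_+\mu_++t_-\mu_-=\mu_N$, and Lemma~\ref{Equiv} yields $\iota_N\sim s_+\iota_+\oplus s_-\iota_-$.

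I do not expect a serious obstacle, since every ingredient has already been established. The one point that needs a moment's care is the last case: one must verify that $s_+\iota_+\oplus s_-\iota_-$ actually satisfies the cyclicity requirement in the definition of a boundary intertwiner, rather than merely being a $\Gamma$-map into $\mcH'_+\oplus\mcH'_-$, and this is exactly what Lemma~\ref{sum-two-irrin} provides, valid precisely because both scalar multiples are nonzero.
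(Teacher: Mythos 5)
Your argument is correct and is exactly the route the paper intends: it states this corollary without proof, treating it as an immediate consequence of Theorem~\ref{duplicity} combined with the correspondence between boundary intertwiners and mu-maps (Lemma~\ref{Equiv}), the behaviour of mu-maps under scalar multiples and direct sums, and Lemma~\ref{sum-two-irrin} for the cyclicity of $s_+\iota_+\oplus s_-\iota_-$. You have simply made the implicit case analysis explicit, including the correct observation that $\mu_N\neq 0$ rules out $t_+=t_-=0$.
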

In the last case, the boundary intertwiner will be a boundary
realization (an isometry) if $s_+^2+s_-^2=1$.

\section{``Schur orthogonality''}\label{schur-sec}

The aim of this Section is to show how Proposition~\ref{DupLimP} can
be specialized using \emph{good vectors} to get formulae for limits of
sums of products of matrix coefficents.  The version of Schur
orthogonality discussed here involves matrix coefficients of one fixed
representation~$\pi$. Similar Schur orthogonality for coefficients of
two different representations is worth looking into, but we do not
know how to proceed.
\subsection{Duplicity}
We consider the case of duplicity first.  The reigning hypotheses are:
\begin{itemize}
\item $\pi:\Gamma\to\mcU(\mcH)$
  is a unitary representation of~$\Gamma$.
\item We have two irreducible representations
  $\pi'_\pm:\cp\to\mcB(\mcH_\pm')$, which are inequivalent as
  representations of~$\cp$.
\item We have two perfect boundary realizations of~$\pi$,
  $\iota_\pm:\mcH\to\mcH_\pm'$.
\item Let $\iota:\mcH\to\mcH_+\oplus\mcH_-$ be defined by
  $\iota=\frac{1}{\sqrt 2}(\iota_+\oplus\iota_-)$.
  As per Section~\ref{imfS}, associate~$\mu$ and~$F$ to~$\iota$.
\item The~\eqref{FTC} holds for~$F$.
\end{itemize}
Our aim is to prove Theorem~\ref{Schur}. In the statement of that
theorem appears a dense linear subspace
$\mcH^\infty\subseteq\mcH$. Let $\mcH^\infty$ be the subspace of good
vectors with respect to $\iota$. $\mcH^\infty$ is dense (see the proof
of Theorem~\ref{duplicity}), closed under linear combinations
(Lemma~\ref{gv-linear}) and $\GG$-invariant
(Lemma~\ref{l:good-stable}). Also in Theorem~\ref{Schur} appears a positive
constant $A_\pi$.  Let $A_\pi=1/(F_+,F_-)$.

It is clear that the restriction map from $C(\GG\sqcup\bGG)$ to
$C(\bGG)$ is a map of $C^*$-algebras. Abusing notation we
define, for any $G$ in $C(\GG\sqcup\bGG)$,
\begin{equation*}
\pi'_\pm(G)= \pi'_\pm(G|_{\bGG})\qquad
\text{and}\qquad\mu_\pm(G)=\mu_\pm
(G|_{\bGG})\period
\end{equation*}
For $G\in C(\GG\sqcup\bGG)$ and~$x\in\GG$ set
$G^*(x)=\overline{G(x^{-1})}$.  We must prove
  \begin{multline}\label{schur2}
  \lim_{\epsilon\to 0+} \epsilon
    \sum_{x\in\Gamma} e^{-\epsilon|x|} G(x)\tilde G^*(x)
      \langle v_1,\pi(x)v_3 \rangle
      \overline{\langle v_2,\pi(x)v_4 \rangle} \\
      =\frac{1}{(F_+,F_-)}\left(
        \langle \pi'_+(G)\iota_+ v_1,\iota_+v_2\rangle
        \overline{\langle \pi'_-(\tilde G)
          \iota_-v_3,\iota_-v_4\rangle}\right. \\
      +\left.
        \langle \pi'_-(G)\iota_-v_1,\iota_-v_2\rangle
        \overline{\langle \pi'_+(\tilde G)\iota_+v_3,
           \iota_+v_4\rangle} \right)
      \period
  \end{multline}
for any $G,\tilde{G}\in
C(\GG\sqcup\bGG)$, $v_1,v_2\in\mcH$, and $v_3,v_4\in\mcH^\infty$.
We need only prove~\eqref{schur2} with $v_1=v_2=w\in\mcH$ and
$v_3=v_4=v\in\mcH^\infty$. Once this is done, polarization, first with
respect to~$v_1$ and~$v_2$, then with respect to~$v_3$ and~$v_4$ will
give~\eqref{schur2} in generality.

Taking into account the definition of $\mu_\pm$ we see that with
$v_1=v_2=w$ and $v_3=v_4=v$ formula~\eqref{schur2}
becomes:
\begin{multline}\label{schurbdry}
\lim_{\epsilon\to 0+}
\epsilon\sum_{x\in\GG}e^{-\epsilon|x|}G(x)\Gt^*(x)
\,|\langle w,\pi(x)v\rangle|^2\\
=\frac1{(F_+,F_-)}\left(
\langle\mu_+(G)w,w\rangle\overline{\langle \mu_-(\Gt)v,v\rangle} +
\langle\mu_-(G)w,w\rangle\overline{\langle\mu_+(\Gt)v,v\rangle}\right)
\period
\end{multline}

\begin{remark}\label{drop-finite}  The limit on the left hand side
  of~\eqref{schurbdry} remains the same if we omit any finite number
  of values of~$x$. For instance, we can restrict the sum to
  $x\in\Gamma$ with $|x|\geq N$.
\end{remark}

\begin{lemma}\label{lemma-key8}
 Let $v\in\mcH^\infty$ and $w\in\mcH$. Then there exists a
 constant $C(v,w)$ depending only on $v$ and $w$, such that
\begin{equation}
\epsilon\sum_{x\in\GG}e^{-\epsilon|x|}\left|\langle
w,\pi(x)v\rangle\right|^2\leq
C(v,w)
\end{equation}
whenever $0<\epsilon\leq 1$.
\end{lemma}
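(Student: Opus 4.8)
The plan is to split the sum over $\Gamma$ according to word-length and feed each spherical sum into the uniform estimate already available for good vectors. First I would rewrite the left-hand side as
\[
  \epsilon\sum_{x\in\Gamma}e^{-\epsilon|x|}\,|\langle w,\pi(x)v\rangle|^2
  =\epsilon\sum_{n\geq 0}e^{-\epsilon n}
     \sum_{|x|=n}|\langle w,\pi(x)v\rangle|^2 .
\]
Since $v\in\mcH^\infty$ is a good vector, Corollary~\ref{gvb-4} supplies a constant $C=C(v)>0$, independent of both $n$ and $w$, with $\sum_{|x|=n}|\langle w,\pi(x)v\rangle|^2\leq C\norm{w}^2$ for every $n$. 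Substituting this bound into the displayed identity gives
\[
  \epsilon\sum_{x\in\Gamma}e^{-\epsilon|x|}\,|\langle w,\pi(x)v\rangle|^2
  \leq C\norm{w}^2\,\epsilon\sum_{n\geq 0}e^{-\epsilon n}
  =C\norm{w}^2\,\frac{\epsilon}{1-e^{-\epsilon}} .
\]

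To finish I would bound the remaining factor uniformly on $(0,1]$: from $e^{\epsilon}\geq 1+\epsilon$ one gets $1-e^{-\epsilon}\geq\epsilon e^{-\epsilon}$, and for $0<\epsilon\leq 1$ one also has $e^{-\epsilon}\geq e^{-1}$, so $\epsilon/(1-e^{-\epsilon})\leq e$. Hence the lemma holds with $C(v,w)=e\,C(v)\norm{w}^2$.

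There is essentially no obstacle here: the real content is entirely in Corollary~\ref{gvb-4} (which itself rests on Lemma~\ref{gvb} and the good-vector hypothesis), and once that estimate — crucially, uniform in both $n$ and $w$ — is in hand, all that remains is summing a geometric series and an elementary inequality for $\epsilon/(1-e^{-\epsilon})$. The only point worth a moment's care is that the constant coming from Corollary~\ref{gvb-4} does not depend on $n$, which is precisely what allows the geometric series to converge to a quantity bounded uniformly in $\epsilon\in(0,1]$.
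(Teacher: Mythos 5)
Your proof is correct and follows the same route as the paper's: decompose the sum by word-length, invoke Corollary~\ref{gvb-4} for the spherical bound uniform in $n$ and $w$, and sum the geometric series. The only addition is your explicit verification that $\epsilon/(1-e^{-\epsilon})\leq e$ on $(0,1]$, which the paper leaves implicit.
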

\begin{proof}
By Corollary~\ref{gvb-4} there exists a constant $C$ depending only on
$v$ 
such that 
\begin{equation*}
\sum_{|x|=n}
\left|\langle
w,\pi(x)v\rangle\right|^2\leq C(v)\|w\|^2\period
\end{equation*}
Multiply the above inequality by $e^{-\epsilon n}$ and add up the
geometric
series to get the result.
\end{proof}
\begin{corollary}\label{cor-key8}
Let $v\in\mcH^\infty$ and $w\in\mcH$. Let $H$ be any function in
$C(\GG\sqcup\bGG)$
such that $\norm{H}_\infty\leq\delta$. Then there exists a
constant $C=C(v,w)$ such that
\begin{equation*}
\limsup_{\epsilon\to 0+}
\left|\epsilon\sum_{x\in\GG}e^{-\epsilon|x|}H(x)
\,|\langle w,\pi(x)v\rangle|^2\right|
\leq C\delta
\period
\end{equation*}
\end{corollary}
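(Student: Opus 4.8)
The plan is to deduce this directly from Lemma~\ref{lemma-key8} by a crude pointwise bound on~$H$. First I would apply the triangle inequality inside the absolute value, writing
\begin{equation*}
  \left|\epsilon\sum_{x\in\GG}e^{-\epsilon|x|}H(x)\,|\langle w,\pi(x)v\rangle|^2\right|
  \leq \epsilon\sum_{x\in\GG}e^{-\epsilon|x|}\,|H(x)|\,|\langle w,\pi(x)v\rangle|^2
  \period
\end{equation*}
Then, since $|H(x)|\leq\norm{H}_\infty\leq\delta$ for every $x\in\GG$, I would pull the factor~$\delta$ out of the sum to obtain the upper bound
\begin{equation*}
  \delta\cdot\epsilon\sum_{x\in\GG}e^{-\epsilon|x|}\,|\langle w,\pi(x)v\rangle|^2
  \period
\end{equation*}

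Next I would invoke Lemma~\ref{lemma-key8}, which (because $v\in\mcH^\infty$ is a good vector) bounds the remaining sum by a constant $C(v,w)$ depending only on~$v$ and~$w$, uniformly for $0<\epsilon\leq1$. So the whole expression is at most $C(v,w)\,\delta$ for all sufficiently small~$\epsilon$, and taking $\limsup_{\epsilon\to0+}$ yields the claim with $C=C(v,w)$.

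I do not expect any genuine obstacle here: the only input is the uniform-in-$\epsilon$ estimate already established in Lemma~\ref{lemma-key8}, and everything else is the elementary observation that replacing $H$ by $\norm{H}_\infty$ only enlarges the sum. The one point worth stating explicitly is that the constant produced is independent of~$H$ (it depends on~$H$ only through the size of~$\norm{H}_\infty$, which has been factored out), since that uniformity is exactly what will be used when this corollary is applied to approximating functions in the proof of~\eqref{schurbdry}.
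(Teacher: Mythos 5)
Your proof is correct and is exactly the argument the paper intends: the corollary is stated without proof as an immediate consequence of Lemma~\ref{lemma-key8}, obtained by bounding $|H(x)|$ by $\norm{H}_\infty\leq\delta$ termwise and invoking the uniform-in-$\epsilon$ estimate. Your added remark that the constant depends on $H$ only through $\norm{H}_\infty$ is precisely the uniformity used later in the density argument.
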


For $z\in\Gamma$ we defined $\lambda(z)$ as left-translation acting on
$C(\bGG)$. Here we will use the same notation for left-translation
acting on $C(\GG\sqcup\bGG)$.
\begin{lemma}\label{lemma-key-trasl}
Fix $G,\tilde G\in C(\GG\sqcup\bGG)$. Suppose that~\eqref{schurbdry}
holds for that~$G$ and~$\tilde G$ together with any $w\in\mcH$ and
$v\in\mcH^\infty$. Let~$z\in\GG$. Then \eqref{schurbdry} also holds if
we replace~$G$ by $\lambda(z)G$ or if we replace $\tilde G$ with
$\lambda(z)\tilde G$.
\end{lemma}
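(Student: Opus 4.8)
The plan is to reindex the defining sum so that the translated function lines up with the untranslated one, to absorb the group element~$z$ into one of the two vectors using unitarity of~$\pi$, and then to argue that the leftover discrepancy — a perturbation of the exponential weight together with a perturbation in the argument of one of the functions — washes out as $\epsilon\to 0+$. Write
\[
  L_\epsilon(G,\tilde G;w,v)=\epsilon\sum_{x\in\GG}e^{-\epsilon|x|}G(x)\tilde G^*(x)\,|\langle w,\pi(x)v\rangle|^2,
\]
and let $R(G,\tilde G;w,v)$ denote the right-hand side of~\eqref{schurbdry}; the hypothesis is that $L_\epsilon(G,\tilde G;w,v)\to R(G,\tilde G;w,v)$ for all $w\in\mcH$, $v\in\mcH^\infty$.

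First I would do the reindexing. For the replacement of $G$ by $\lambda(z)G$, substitute $x=zy$; since $(\lambda(z)G)(zy)=G(y)$ and $\langle w,\pi(zy)v\rangle=\langle\pi(z)^{-1}w,\pi(y)v\rangle$, setting $w'=\pi(z)^{-1}w\in\mcH$ gives
\[
  L_\epsilon(\lambda(z)G,\tilde G;w,v)=\epsilon\sum_{y\in\GG}e^{-\epsilon|zy|}\,G(y)\,\tilde G^*(zy)\,|\langle w',\pi(y)v\rangle|^2 .
\]
For the replacement of $\tilde G$ by $\lambda(z)\tilde G$, one checks $(\lambda(z)\tilde G)^*(x)=\tilde G^*(xz)$; substituting $x=yz^{-1}$ and putting $v'=\pi(z)^{-1}v$, which still lies in $\mcH^\infty$ by Lemma~\ref{l:good-stable}, gives
\[
  L_\epsilon(G,\lambda(z)\tilde G;w,v)=\epsilon\sum_{y\in\GG}e^{-\epsilon|yz^{-1}|}\,G(yz^{-1})\,\tilde G^*(y)\,|\langle w,\pi(y)v'\rangle|^2 .
\]
The two cases are entirely parallel (there is even a formal $x\mapsto x^{-1}$, $G\leftrightarrow\tilde G$, $w\leftrightarrow v$ symmetry of~\eqref{schurbdry}, but it interchanges the $\mcH$-slot with the $\mcH^\infty$-slot, so I would not invoke it and instead treat the $\tilde G$-case directly); I describe only the first.

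Next I would compare the reindexed sum with $L_\epsilon(G,\tilde G;w',v)$: their difference is
\[
  \epsilon\sum_{y\in\GG}\bigl(e^{-\epsilon|zy|}\tilde G^*(zy)-e^{-\epsilon|y|}\tilde G^*(y)\bigr)\,G(y)\,|\langle w',\pi(y)v\rangle|^2 ,
\]
and I split the bracket as $e^{-\epsilon|zy|}\bigl(\tilde G^*(zy)-\tilde G^*(y)\bigr)+\bigl(e^{-\epsilon|zy|}-e^{-\epsilon|y|}\bigr)\tilde G^*(y)$. Since $\bigl|\,|zy|-|y|\,\bigr|\le|z|$ we have $e^{-\epsilon|zy|}\le e^{\epsilon|z|}e^{-\epsilon|y|}$ and $\bigl|e^{-\epsilon|zy|}-e^{-\epsilon|y|}\bigr|\le(e^{\epsilon|z|}-1)e^{-\epsilon|y|}$, so the second piece contributes at most $(e^{\epsilon|z|}-1)\norm{G}_\infty\norm{\tilde G}_\infty\,\epsilon\sum_y e^{-\epsilon|y|}|\langle w',\pi(y)v\rangle|^2$, which is $O\bigl((e^{\epsilon|z|}-1)\,C(v,w')\bigr)\to 0$ by Lemma~\ref{lemma-key8}. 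For the first piece, the reduced words of $(zy)^{-1}=y^{-1}z^{-1}$ and of $y^{-1}$ agree except for the last at most $|z|$ among their first $|y|$ letters, so by uniform continuity of $\tilde G$ on the compact space $\GG\sqcup\bGG$ the quantity $\eta(m):=\sup_{|y|=m}|\tilde G^*(zy)-\tilde G^*(y)|$ tends to $0$; given $\delta>0$, pick $N$ with $\eta(m)\le\delta$ for $m\ge N$, so the part of the sum with $|y|\ge N$ is at most $e^{|z|}\norm{G}_\infty\,\delta\,C(v,w')$ (Lemma~\ref{lemma-key8} again), while the finitely many terms with $|y|<N$ contribute $O(\epsilon)$. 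Letting $\epsilon\to 0+$ and then $\delta\to 0+$ shows the difference vanishes. (This is in the spirit of Corollary~\ref{cor-key8}; the only wrinkle is that $\tilde G^*(zy)-\tilde G^*(y)$ is not of uniformly small sup-norm, only small for large $|y|$, which is why the argument is run by hand and not by direct citation.)

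Finally I would identify the limit. By the covariance relation~\eqref{muCondE}, which holds for both mu-maps $\mu_\pm$ (each is a mu-map for~$\pi$), $\mu_\pm(\lambda(z)G)=\pi(z)\mu_\pm(G)\pi(z)^{-1}$, hence $\langle\mu_\pm(\lambda(z)G)w,w\rangle=\langle\mu_\pm(G)w',w'\rangle$, so $R(\lambda(z)G,\tilde G;w,v)=R(G,\tilde G;w',v)$; likewise $\langle\mu_\pm(\lambda(z)\tilde G)v,v\rangle=\langle\mu_\pm(\tilde G)v',v'\rangle$ gives $R(G,\lambda(z)\tilde G;w,v)=R(G,\tilde G;w,v')$. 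Combining with the previous step,
\[
  \lim_{\epsilon\to 0+}L_\epsilon(\lambda(z)G,\tilde G;w,v)=\lim_{\epsilon\to 0+}L_\epsilon(G,\tilde G;w',v)=R(G,\tilde G;w',v)=R(\lambda(z)G,\tilde G;w,v),
\]
and identically for $\lambda(z)\tilde G$ — which is exactly~\eqref{schurbdry} for the translated functions. The main obstacle is the error estimate in the comparison step, and within it the only genuinely geometric point: that $\tilde G^*(zy)-\tilde G^*(y)\to 0$ (and $G(yz^{-1})-G(y)\to 0$) as $|y|\to\infty$, which is where continuity of $G$ and $\tilde G$ on the full compactification $\GG\sqcup\bGG$, not merely on the boundary, is used; everything else is routine bookkeeping on top of the uniform bound of Lemma~\ref{lemma-key8}.
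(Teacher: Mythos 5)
Your proposal is correct and follows essentially the same route as the paper's proof: the same reindexing $x=zy$ with absorption of $z$ into the vector via unitarity, the same splitting of the error into an exponential-weight perturbation (controlled by $|e^{\epsilon(|y|-|zy|)}-1|\leq e^{\epsilon|z|}-1$) and a $\tilde G^*(zy)-\tilde G^*(y)$ term (controlled by continuity of $\tilde G$ on the compactification for $|y|$ large, discarding finitely many terms), with the uniform bound of Lemma~\ref{lemma-key8}/Corollary~\ref{cor-key8} and the covariance of $\mu_\pm$ identifying the limit. The only cosmetic difference is that you run the tail estimate by hand where the paper cites Remark~\ref{drop-finite} and Corollary~\ref{cor-key8} directly; the substance is identical, including the use of Lemma~\ref{l:good-stable} in the $\tilde G$ case.
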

\begin{proof}
When we replace~$G$ with~$\lambda(z)G$ the left hand side
of~\eqref{schurbdry} becomes
\begin{align*}
\text{LHS}&=\lim_{\epsilon\to 0+}
\epsilon\sum_{x\in\GG}e^{-\epsilon|x|}G(z^{-1}x)\Gt^*(x)
\,|\langle w,\pi(x)v\rangle|^2\\
&=\lim_{\epsilon\to 0+}
\epsilon\sum_{x\in\GG}e^{-\epsilon|zx|}G(x)\Gt^*(zx)
\,|\langle w,\pi(zx)v\rangle|^2\\
&=\lim_{\epsilon\to 0+}
\epsilon\sum_{x\in\GG}e^{-\epsilon|zx|}G(x)\Gt^*(zx)
\,|\langle \pi(z^{-1})w,\pi(x)v\rangle|^2\\
&=\lim_{\epsilon\to 0+}
\epsilon\sum_{x\in\GG}e^{-\epsilon|x|}G(x)\Gt^*(x)
\,|\langle \pi(z^{-1})w,\pi(x)v\rangle|^2+\text{``vanishing term''}\\
&=\frac1{(F_+,F_-)}\biggl(
  \langle\mu_+(G)\pi(z^{-1})w,\pi(z^{-1})w \rangle
  \overline{\langle \mu_-(\Gt)v,v\rangle}\biggr. \\
&\qquad\qquad\qquad +\biggl.\langle\mu_-(G)\pi(z^{-1})w,\pi(z^{-1})w
    \rangle
    \overline{\langle\mu_+(\Gt)v,v\rangle}\biggr) \\
&=\frac1{(F_+,F_-)}\biggl(
  \langle\mu_+(\lambda(z)G)w,w \rangle
  \overline{\langle \mu_-(\Gt)v,v\rangle}\biggr. \\
    &\qquad\qquad\qquad +\biggl.\langle\mu_-(\lambda(z)G)w,w \rangle
    \overline{\langle\mu_+(\Gt)v,v\rangle}\biggr)
=\text{RHS.}
\end{align*}

It remains to show that the ``vanishing term'' vanishes. Write it as:
\begin{equation*}
\lim_{\epsilon\to 0+}
\epsilon\sum_{x\in\GG}e^{-\epsilon|x|}G(x)(e^{\epsilon(|x|-|zx|)}\Gt^*(zx)-\Gt^*(x))
\,|\langle \pi(z^{-1})w,\pi(x)v\rangle|^2 \period
\end{equation*}
Note that $e^{-\epsilon|z|}\leq e^{\epsilon(|x|-|zx|)}\leq e^{\epsilon
  |z|}$, so $|e^{\epsilon(|x|-|zx|)}-1|\leq e^{\epsilon |z|}-1$. Fix
any~$\delta>0$. Since $\Gt$~is continuous on $\GG\sqcup\bGG$ we can
choose $N>0$ so that
$|\Gt^*(zx)-\Gt^*(x)|=|\Gt(x^{-1}z^{-1})-\Gt(x^{-1})|\leq\delta$ when
$|x|\geq N$. According to Remark~\ref{drop-finite} we can omit all
terms where $|x|<N$. Then
\begin{equation*}
  |G(x)(e^{\epsilon(|x|-|zx|)}\Gt^*(zx)-\Gt^*(x))|
  \leq \norm{G}_\infty\bigl((e^{\epsilon|z|}-1)\norm{\Gt}_\infty+\delta\bigr) \period
\end{equation*}
Hence Corollary~\ref{cor-key8} says
\begin{multline*}
\limsup_{\epsilon\to 0+}
\left|\epsilon\sum_{x\in\GG}e^{-|x|}G(x)(e^{\epsilon(|x|-|zx|)}\Gt^*(zx)-\Gt^*(x))
\,|\langle \pi(z^{-1})w,\pi(x)v\rangle|^2\right| \\
\leq C(v,w)\,\norm{G}_\infty\,\delta \period
\end{multline*}
As $\delta>0$ is arbitrary the term does indeed vanish.

A strictly analogous calculation takes care of the case when we
replace $\Gt$ with~$\lambda(z)\Gt$. Note that in this second case one
uses Lemma~\ref{l:good-stable}, the $\Gamma$-invariance of~$\mcH^\infty$.
\end{proof}

\begin{definition} Recall that $\one_x$ was defined as
  $\one_{\bGG(x)}\in C(\bGG)$. Abusing notation, we will also let
  $\one_x=\one_{\GG(x)\sqcup\bGG(x)}\in C(\GG\sqcup\bGG)$.  Thus
  \begin{equation*}
    \one_x(\text{reduced word})=
    \begin{cases}
      1 & \text{if it starts with the reduced word
        for~$x$} \\
      0 & \text{otherwise}
    \end{cases}
  \end{equation*}
  for both finite and infinite reduced words.
\end{definition}

We proceed to prove \eqref{schurbdry} for $G=\one_a$ and
$\Gt=\one-\one_b$:

\begin{proposition}\label{limite-con-ab}
For $v\in\mcH^\infty$ and $w\in\mcH$ one has
\begin{multline}\label{somme-con-ab-bdry}
  \lim_{\epsilon\to 0+}\epsilon\sum_{x\in\GG}
  e^{-\epsilon|x|}\one_a(x)(\one-\one_b)(x^{-1})
  \,|\langle w,\pi(x)v\rangle|^2 \\
   =\frac{1}{(F_+,F_-)}
  \left(\langle\mu_+(\one_a)w,w\rangle
 \langle \mu_{-}(\one-\one_b)v,v\rangle  
\right. \\
  + \left.
  \langle\mu_-(\one_a)w,w\rangle\langle\mu_+(\one-\one_b)v,v\rangle
\right)\period
\end{multline}
\end{proposition}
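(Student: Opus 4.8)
The plan is to read off the left-hand side of~\eqref{somme-con-ab-bdry} as a matrix coefficient of the weak limit produced by Proposition~\ref{DupLimP}, applied to a well-chosen $|A|$-tuple $F_0$. Since $v\in\mcH^\infty$ is a good vector with respect to~$\iota$, fix $C>0$ and $N\geq0$ with $\mcT^N E\leq CF$, where $E_c=v\otimes\bar v$ for all $c\in A$. Now define $F_0$ by $(F_0)_b=v\otimes\bar v$ and $(F_0)_c=0$ for $c\neq b$. Then $F_0\leq E$ componentwise, and since $\mcT$ is monotone, $(\mcT^N F_0)_a\leq(\mcT^N E)_a\leq CF_a$; so $F_0$ satisfies the hypothesis of Proposition~\ref{DupLimP}.

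Next I would expand $\mcT^n F_0$ by Lemma~\ref{potenzaT}. As only the $b$-component of $F_0$ is nonzero, the double sum collapses and, for $n\geq1$,
\begin{equation*}
  (\mcT^n F_0)_a=\sum_{\substack{|x|=n\\ x\in\Gamma(a),\ x\notin\tilde\Gamma(b^{-1})}}\pi(x)(v\otimes\bar v)\pi(x)^{-1}\period
\end{equation*}
Here $x\in\Gamma(a)$ says $\one_a(x)=1$ and $x\notin\tilde\Gamma(b^{-1})$ says $(\one-\one_b)(x^{-1})=1$. Multiplying by $e^{-\epsilon n}$, summing over $n\geq0$ (the $n=0$ term contributes $\epsilon(F_0)_a$, which tends to $0$), and pairing against $w$ --- using $\langle\pi(x)(v\otimes\bar v)\pi(x)^{-1}w,w\rangle=|\langle w,\pi(x)v\rangle|^2$ --- identifies the left-hand side of~\eqref{somme-con-ab-bdry} with $\langle(F_L)_a w,w\rangle$, where $F_L$ is the weak limit of Proposition~\ref{DupLimP}. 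Note this uses that the limit exists over the full $\epsilon\to0+$, which is precisely what that proposition asserts.

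It then remains to evaluate $F_L=\bigl((F_0,F_-)F_++(F_0,F_+)F_-\bigr)/(F_+,F_-)$ at index $a$ against $w$. I would compute $(F_0,F_\pm)=\sum_{d\neq b}\TR(v\otimes\bar v,(F_\pm)_d)=\TR\!\bigl(v\otimes\bar v,\sum_{d\neq b}(F_\pm)_d\bigr)$ by bilinearity of $\TR$; since $\iota_\pm$ are perfect realizations, $\sum_{d\in A}(F_\pm)_d=\mu_\pm(\one)=\ID$, whence $\sum_{d\neq b}(F_\pm)_d=\mu_\pm(\one-\one_b)$, and the elementary identity $\TR(v\otimes\bar v,T)=\langle Tv,v\rangle$ (already invoked in Section~\ref{main-proofs}) gives $(F_0,F_\pm)=\langle\mu_\pm(\one-\one_b)v,v\rangle$. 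Combining this with $\langle(F_\pm)_a w,w\rangle=\langle\mu_\pm(\one_a)w,w\rangle$ produces exactly the right-hand side of~\eqref{somme-con-ab-bdry}.

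I do not expect a genuine obstacle: the argument is essentially bookkeeping that converts the operator identity of Proposition~\ref{DupLimP} into a scalar identity about matrix coefficients. The only place demanding care is the index matching in Lemma~\ref{potenzaT} --- in particular that the residual summation index ``$b$'' there encodes the constraint that the word not end in $b^{-1}$, hence corresponds to $(\one-\one_b)(x^{-1})$ and not to $\one_b$ --- together with the innocuous mismatch at $n=0$ versus $x=e$, which disappears in the limit.
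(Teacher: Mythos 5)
Your proposal is correct and follows essentially the same route as the paper's proof: both apply Proposition~\ref{DupLimP} to the $|A|$-tuple $F_0$ concentrated at the index~$b$, expand $(\mcT^n F_0)_a$ via Lemma~\ref{potenzaT} to recover the sum over $x\in\Gamma(a)$, $x\notin\tilde\Gamma(b^{-1})$, and compute $(F_0,F_\pm)=\langle\mu_\pm(\one-\one_b)v,v\rangle$. The only (harmless) difference is that you verify the hypothesis $(\mcT^N F_0)_a\leq CF_a$ explicitly from the goodness of~$v$ and the monotonicity of~$\mcT$, a step the paper leaves implicit.
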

\begin{proof}
Observe that the left hand side of~\eqref{somme-con-ab-bdry} is
equivalent to: 
\begin{equation}\label{somme-con-ab}
  \lim_{\epsilon\to 0+}\epsilon\sum_{x\in\GG(a),\; x\notin\tilde\GG(b^{-1})}
  e^{-\epsilon|x|} \,|\langle w,\pi(x)v\rangle|^2 \period
\end{equation}

As in the proof of Theorem~\ref{duplicity} define
\begin{align*}
    (F_0)_{ b}&=v\otimes\bar v & (F_0)_c&=0 \qquad\text{for $c\neq
    b$.}
\end{align*}
As per Proposition~\ref{DupLimP} one has
\begin{equation}\label{somme}
\wklim_{\epsilon\to 0+} \epsilon\sum_{n\geq 0} e^{-\epsilon n} \mcT^n
  F_0 = \frac{(F_0,F_-)F_++(F_0,F_+)F_-}{(F_+,F_-)}
  \period
\end{equation}
Dropping the $n=0$~term from the sum on the left has no effect on the
limit.  Consider the left-hand side of~\eqref{somme}. For~$n\geq 1$
Lemma~\ref{potenzaT} gives
\begin{align*}
  (\mcT^n F_0)_a=\sum_{c}(\mcT^n)_{a,c}(F_0)_c&=
    \sum_{\substack{x\in\Gamma\st |x|=n\\
        x\in\Gamma(a)\st x\notin\tilde\Gamma(b^{-1})}}
  P(x)(v\otimes \bar v)\\
&=  \sum_{\substack{x\in\Gamma\st |x|=n\\
        x\in\Gamma(a)\st x\notin\tilde\Gamma(b^{-1})}}
  \pi(x)v\otimes \overline{\pi(x)v}
  \period
\end{align*}
On applying the $a$-th component of the left-hand side
of~\eqref{somme} to the vector~$w$ and then calculating the inner
product with~$w$, one obtains
\begin{equation*}
\begin{aligned}
&\lim_{\epsilon\to 0+} \quad
\epsilon\;\langle\left(\sum_{n\geq1}  e^{-\epsilon n} \mcT^n F_0\right)_aw,w\rangle\\
=&\lim_{\epsilon\to 0+} \quad \epsilon\;
 \sum_{n\geq 1}e^{-\epsilon n}\left( \sum_{\substack{x\in\Gamma\st |x|=n\\
        x\in\Gamma(a)\st x\notin\tilde\Gamma(b^{-1})}}
  \langle w,\pi(x)v\rangle\overline{\langle w,\pi(x)v\rangle
  }\right)
\end{aligned}
\end{equation*}
which is equal to~\eqref{somme-con-ab}.

Now let us compute the right-hand side of~\eqref{somme}:
\begin{multline*}
  (F_0,F_\pm)=\sum_{c,d\st c\neq d}\TR((F_0)_c,(F_\pm)_d)=
  \sum_{d\st d\neq b}\TR(v\otimes\bar{v},(F_\pm)_d)\\
  =\sum_{d\st d\neq b}
  \langle\mu_\pm(\one_d)v,v\rangle=
  \langle\mu_\pm(\one-\one_b)v,v\rangle\;.
\end{multline*}  
Hence the $a$-th component of the right-hand side of~\eqref{somme} is
given by
\begin{equation}\label{sommedestre}
\frac{ \langle\mu_-(\one-\one_b)v,v\rangle \mu_+(\one_a)+
  \langle\mu_+(\one-\one_b)v,v\rangle \mu_-(\one_a)}{(F_+,F_-)}\;.
\end{equation}
On applying this operator to the vector~$w$ and then taking the inner
product with~$w$, one gets the right-hand side
of~\eqref{somme-con-ab-bdry}.
\end{proof}

\begin{corollary}\label{limite-con-zy0}
For every $v\in\mcH^\infty$, $w\in\mcH$ and
$y,z\in\GG$
one has
  \begin{multline}\label{schur}
    \lim_{\epsilon\to
      0+}\epsilon\;\sum_{x\in\Gamma}\one_z(x)\one_y^*(x) \,
    |\langle w,\pi(x)v\rangle|^2 \\ = \frac1{(F_+,F_-)} \left(
    \langle\mu_+(\one_z)w,w\rangle
    \overline{\langle\mu_-(\one_y)v,v\rangle}
    +\langle\mu_-(\one_z)w,w\rangle
    \overline{\langle\mu_+(\one_z)v,v\rangle}
    \right) \period
  \end{multline}
\end{corollary}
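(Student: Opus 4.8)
The plan is to recognize \eqref{schur} as a special case of \eqref{schurbdry} and then to derive the needed instances of \eqref{schurbdry} from Proposition~\ref{limite-con-ab} by left translation, using Lemma~\ref{lemma-key-trasl}. Taking $v_1=v_2=w\in\mcH$, $v_3=v_4=v\in\mcH^\infty$, $G=\one_z$ and $\tilde G=\one_y$ in \eqref{schurbdry}, and using that $\one_y^*(x)=\overline{\one_y(x^{-1})}=\one_y(x^{-1})$ because $\one_y$ is real-valued, one sees that \eqref{schur} is exactly \eqref{schurbdry} for these data. So it is enough to establish \eqref{schurbdry} with $G=\one_z$ and $\tilde G=\one_y$ for every $z,y\in\Gamma$ and all $w\in\mcH$, $v\in\mcH^\infty$. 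I would first note that both sides of \eqref{schurbdry} are linear in $G$ and conjugate-linear in $\tilde G$ (the left-hand sides through the factors $G(x)$ and $\tilde G^*(x)=\overline{\tilde G(x^{-1})}$, the right-hand sides because the maps $\mu_\pm$ are linear). Since $\one_e$ and $\sum_{a\in A}\one_a$ agree on $\bGG$ and differ on $\Gamma$ only at the single point $e$, which does not affect \eqref{schurbdry} by Remark~\ref{drop-finite}, it follows that once \eqref{schurbdry} is known for all $G=\one_a$ with $a\in A$ it holds for $G=\one_e$ as well, and symmetrically for $\tilde G=\one_e$.

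Proposition~\ref{limite-con-ab} supplies \eqref{schurbdry} for $(G,\tilde G)=(\one_a,\one-\one_b)$ with $a,b\in A$, for all $w\in\mcH$ and $v\in\mcH^\infty$. For $z\in\Gamma$ with $z\neq e$, let $a\in A$ be the last letter of the reduced word for $z$ and write $z=w_1a$ with $|z|=|w_1|+1$; then $\lambda(w_1)\one_a=\one_z$ in $C(\Gamma\sqcup\bGG)$. Applying Lemma~\ref{lemma-key-trasl} to translate the first argument by $\lambda(w_1)$ gives \eqref{schurbdry} for $(\one_z,\one-\one_b)$, for all $w,v$, for every $z\neq e$ and every $b\in A$; by the linearity remark this also holds for $z=e$, so \eqref{schurbdry} holds for $(\one_z,\one-\one_b)$ for every $z\in\Gamma$ and every $b\in A$.

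Next, for $y\in\Gamma$ with $y\neq e$, let $c\in A$ be the last letter of $y$ and set $b=c^{-1}$, so that $y$ ends with the reduced word for $b^{-1}$. A direct inspection of the Cayley tree — the same translation computation as in the proof of Lemma~\ref{l:sgvinv} — shows that $\lambda(y)(\one-\one_b)=\one_y$ in $C(\Gamma\sqcup\bGG)$. Applying Lemma~\ref{lemma-key-trasl} to the instances just obtained, now translating the second argument by $\lambda(y)$, gives \eqref{schurbdry} for $(\one_z,\one_y)$, for all $w,v$, for every $z\in\Gamma$ and every $y\neq e$; the linearity remark supplies the case $y=e$. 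This establishes \eqref{schurbdry}, and hence \eqref{schur}, for all $z,y\in\Gamma$.

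Once Proposition~\ref{limite-con-ab} and Lemma~\ref{lemma-key-trasl} are available, this is essentially a bookkeeping argument; the only point requiring any care — the closest thing to an obstacle — is the choice of translations, namely that each $\one_z$ is a left translate of some $\one_a$ with $a\in A$ and each $\one_y$ (for $y\neq e$) a left translate of some $\one-\one_b$ with $b\in A$, together with the handling of the trivial word $e$ by linearity.
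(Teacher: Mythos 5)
Your proof is correct and takes essentially the same route as the paper: Proposition~\ref{limite-con-ab} is the base case, and the general case is obtained by left translation via Lemma~\ref{lemma-key-trasl}, using $\lambda(w_1)\one_a=\one_z$ in the first argument and $\lambda(y)(\one-\one_{c^{-1}})=\one_y$ in the second (which you do in one step where the paper first passes through $\lambda(b^{-1})(\one-\one_b)=\one_{b^{-1}}$ and then translates again). Your explicit handling of $z=e$ or $y=e$ by linearity and Remark~\ref{drop-finite} is a minor tidying of a case the paper leaves implicit, not a different argument.
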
 

\begin{proof}
Since $\lambda(b^{-1})(\one-\one_b)=\one_{b^{-1}}$ \eqref{schur}
follows from Proposition~\ref{limite-con-ab}
 and Lemma~\ref{lemma-key-trasl} for $|z|=|y|=1$. The general case
 follows again from Lemma~\ref{lemma-key-trasl}:
write $z=wc$
for the reduced word for $z$ and apply $\lambda(w)$ to $\one_c$,
analogously for $\one_y$.
\end{proof}

Now we finish the proof of~\eqref{schurbdry}, and so of~\eqref{schur2}
and of Theorem~\ref{Schur}.  The linear span of the functions
$\{\one_x\}_{x\in\GG}$ together with all finitely supported functions
on~$\GG$ is dense in~$C(\GG\sqcup\bGG)$.  From
Corollary~\ref{limite-con-zy0} one deduces that \eqref{schurbdry}
holds for~$G$ and $\Gt$~in this dense subset.  Finally,
Corollary~\ref{cor-key8} allows us to pass from the dense subset to
all of~$C(\GG\sqcup\bGG)$.

\subsection{Oddity} Here the reigning hypotheses are as follows:
\begin{itemize}
\item $\pi:\Gamma\to\mcU(\mcH)$ is a unitary representation of~$\Gamma$.
\item $\pi':\cp\to\mcB(\mcH')$ is an irreducible representation of~$\cp$.
\item $\iota:\mcH\to\mcH'$ is an imperfect boundary realization
  of~$\pi$.
\item As per Section~\ref{imfS}, $\mu$ and~$F$ are associated to~$\iota$.
\item The~\eqref{FTC} holds for~$F$.
\end{itemize}

Let $\mcH^\infty\subset\mcH$ be the dense subset of good vectors
relative to~$\iota$.
\begin{theorem}\label{th:schur} Let $G,\Gt\in C(\GG\sqcup\bGG)$;
  $v_1,v_2\in\mcH$; $v_3,v_4\in\mcH^\infty$. Then
\begin{multline}\label{schurbdrybis-odd}
\lim_{\epsilon\to 0+}
\epsilon\sum_{x\in\GG}e^{-\epsilon|x|}G(x)\Gt^*(x)
\langle v_1,\pi(x)v_3\rangle \overline{\langle v_2,\pi(x)v_4\rangle }=\\
\frac1{(F,F)}\left(
  \langle \pi'(G)\iota v_1,\iota v_2\rangle
  \overline{\langle\pi'(\Gt)\iota v_3,\iota v_4\rangle} 
\right)\period
\end{multline}
\end{theorem}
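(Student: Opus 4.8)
The plan is to run, \emph{mutatis mutandis}, the proof of Theorem~\ref{Schur} just completed, with Proposition~\ref{OddLimP} in place of Proposition~\ref{DupLimP}; the oddity version is in fact slightly simpler, since the weak limit in Proposition~\ref{OddLimP} is a single multiple of~$F$ rather than a combination of~$F_+$ and~$F_-$. Take $\mcH^\infty$ to be the subspace of good vectors relative to~$\iota$; it is dense in~$\mcH$ (as established while proving the Oddity~Theorem), closed under linear combinations (Lemma~\ref{gv-linear}), and $\GG$-invariant (Lemma~\ref{l:good-stable}). By polarization -- first in $v_1,v_2$, then in $v_3,v_4$ -- it suffices to prove \eqref{schurbdrybis-odd} in the diagonal case $v_1=v_2=w\in\mcH$, $v_3=v_4=v\in\mcH^\infty$, where, writing $\langle\pi'(G)\iota v_1,\iota v_2\rangle=\langle\mu(G)v_1,v_2\rangle$ with $\mu(G)$ short for $\mu(G|_{\bGG})$, the identity reads
\begin{equation*}
  \lim_{\epsilon\to 0+}\epsilon\sum_{x\in\GG}e^{-\epsilon|x|}G(x)\Gt^*(x)\,|\langle w,\pi(x)v\rangle|^2
  =\frac{1}{(F,F)}\langle\mu(G)w,w\rangle\,\overline{\langle\mu(\Gt)v,v\rangle}
  \period
\end{equation*}
The \emph{a priori} bound $\epsilon\sum_x e^{-\epsilon|x|}|\langle w,\pi(x)v\rangle|^2\leq C(v,w)$ for $0<\epsilon\leq 1$ follows from Corollary~\ref{gvb-4} exactly as in Lemma~\ref{lemma-key8}, and yields the $\limsup$ estimate of Corollary~\ref{cor-key8} verbatim, as well as the observation (Remark~\ref{drop-finite}) that the limit is unchanged by deleting any finite set of terms.

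Next I would prove the exact analogue of Lemma~\ref{lemma-key-trasl}: if the displayed diagonal identity holds for a fixed pair $G,\Gt$ and all $w\in\mcH$, $v\in\mcH^\infty$, then it holds with $G$ replaced by $\lambda(z)G$ or $\Gt$ replaced by $\lambda(z)\Gt$, for any $z\in\GG$. The argument is word for word the one already given: the change of variables $x\mapsto zx$ converts $\lambda(z)G$ back to $G$, introduces the vectors $\pi(z^{-1})w$ (or, for the $\Gt$-variable, uses the $\GG$-invariance of $\mcH^\infty$ from Lemma~\ref{l:good-stable}), and creates a ``vanishing term'' controlled by the inequality $|e^{\epsilon(|x|-|zx|)}-1|\leq e^{\epsilon|z|}-1$, the continuity of $\Gt$ on $\GG\sqcup\bGG$, Remark~\ref{drop-finite}, and Corollary~\ref{cor-key8}; the right-hand side matches up via the covariance $\pi(z)\mu(G)\pi(z)^{-1}=\mu(\lambda(z)G)$.

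The core computation is the base case $G=\one_a$, $\Gt=\one-\one_b$. Set $(F_0)_b=v\otimes\bar v$ and $(F_0)_c=0$ for $c\neq b$; since $v$ is good, $F_0$ is dominated componentwise by the good-vector tuple $E$ with $E_a=v\otimes\bar v$, so $(\mcT^N F_0)_a\leq CF_a$ and Proposition~\ref{OddLimP} applies, giving $\wklim_{\epsilon\to0+}\epsilon\sum_{n\geq0}e^{-\epsilon n}\mcT^n F_0=\frac{(F_0,F)}{(F,F)}F$. Dropping the $n=0$ term and expanding $(\mcT^n F_0)_a=\sum_{|x|=n,\,x\in\GG(a),\,x\notin\tilde\GG(b^{-1})}\pi(x)v\otimes\overline{\pi(x)v}$ by Lemma~\ref{potenzaT}, the $a$-th component of the left-hand side, applied to~$w$ and paired with~$w$, is $\lim_{\epsilon\to0+}\epsilon\sum_x e^{-\epsilon|x|}\one_a(x)(\one-\one_b)(x^{-1})|\langle w,\pi(x)v\rangle|^2$; on the right-hand side $F_a=\mu(\one_a)$ and $(F_0,F)=\sum_{d\neq b}\TR(v\otimes\bar v,F_d)=\langle\mu(\one-\one_b)v,v\rangle$, so one reads off the diagonal identity for this $G,\Gt$, just as in Proposition~\ref{limite-con-ab}. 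The translation lemma (first via $\lambda(b^{-1})(\one-\one_b)=\one_{b^{-1}}$, then iterating along reduced words, as in Corollary~\ref{limite-con-zy0}) extends it to $G=\one_z$, $\Gt=\one_y$ for all $y,z\in\GG$; since the linear span of $\{\one_x\}_{x\in\GG}$ together with the finitely supported functions on~$\GG$ is dense in $C(\GG\sqcup\bGG)$, and finitely supported functions contribute $0$ to both sides, linearity gives the identity on a dense subspace, and Corollary~\ref{cor-key8} (applied to the differences $G-G'$ and $\Gt-\Gt'$) propagates it to all of $C(\GG\sqcup\bGG)$; undoing the polarization completes the proof. I anticipate no genuine obstacle here: every ingredient needed -- density and $\GG$-invariance of $\mcH^\infty$, the good-vector bounds of Corollary~\ref{gvb-4}, and Proposition~\ref{OddLimP} -- is already available, and the only simplification relative to the duplicity case is the single-term weak limit in Proposition~\ref{OddLimP}.
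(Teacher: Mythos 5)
Your proposal is correct and is precisely the argument the paper has in mind: the published text simply states that the proof is ``analogous to the proof in the previous subsection'' and omits it, and your writeup supplies exactly that analogy, substituting Proposition~\ref{OddLimP} for Proposition~\ref{DupLimP} and using the density of good vectors established in the proof of the Oddity case. All the ingredients you cite (Corollary~\ref{gvb-4}, the translation lemma, the base case $G=\one_a$, $\Gt=\one-\one_b$, and the density argument) carry over as you describe, so there is nothing to correct.
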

The proof is analogous to the proof in the previous subsection and we
omit it.

\bibliographystyle{amsalpha}
\bibliography{hyp0nw1}

\end{document}